\renewcommand{\paragraph}[1]{\subsection*{#1}}
\newcommand{\sidecaption}{}
\newcommand{\nocontentsline}[3]{}
\newcommand{\tocless}[2]{\bgroup\let\addcontentsline=\nocontentsline#1{#2}\egroup}
\definecolor{MutedBlue}{RGB}{40,90,160}
\definecolor{cellgray}{gray}{0.875}
\let\int\relax
\newcommand{\int}{\mathring}
\DeclareMathOperator{\id}{{id}}
\newcommand{\acal}{\mathcal{A}}
\newcommand{\dcal}{\mathcal{D}}
\newcommand{\fcal}{\mathcal{F}}
\newcommand{\cc}{\mathbb{C}}
\newcommand{\rr}{\mathbb{R}}
\newcommand{\zz}{\mathbb{Z}}
\newcommand{\ff}{\mathbb{F}}
\newcommand{\chh}{\mathcal{C}}
\newcommand{\hh}{\mathcal{H}}
\newcommand{\rcal}{\mathcal{R}}
\newcommand{\Mod}{\boldsymbol{\operatorname{Mod}}}
\newcommand{\Cobt}{\boldsymbol{\operatorname{Cob}^3}}
\newcommand{\Cobl}{\boldsymbol{\operatorname{Cob}^3_{/l}}}
\newcommand{\Cobtd}{\boldsymbol{\operatorname{Cob}^3_{\bullet}}}
\newcommand{\Cobld}{\boldsymbol{\operatorname{Cob}^3_{\bullet/l}}}
\newcommand{\Kobh}{\mathbf{Kob_{/h}}}
\newcommand{\Link}{\boldsymbol{\operatorname{Link}}}
\newcommand{\compfont}[1]{\textsf{\small#1}}
\newcommand{\lb}{\llbracket}
\newcommand{\rb}{\rrbracket}
\newcommand{\lmr}{\fontfamily{lmr}\selectfont}
\newcommand{\Tu}{\lmr \textit{4{\kern-0.75pt}Tu}}
\newcommand{\hfl}{\operatorname{HFL}}
\newcommand{\hflm}{\operatorname{HFL}^{\kern-1.5pt -}}
\newcommand{\hflc}{\operatorname{HFL}^{\kern-1pt \circ}}
\newcommand{\cfl}{\operatorname{CFL}}
\newcommand{\cflm}{\operatorname{CFL}^{\kern-1.5pt -}}
\newcommand{\hfk}{\operatorname{HFK}}
\newcommand{\bfo}{\mathbf{1}}
\newcommand{\bfx}{\mathbf{x}}
\newcommand{\bfy}{\mathbf{y}}
\newcommand{\kh}{\operatorname{Kh}}
\newcommand{\ckh}{\operatorname{CKh}}
\newcommand{\bn}{\operatorname{BN}}
\newcommand{\cbn}{\operatorname{CBN}}
\newcommand{\var}{U}
\newcommand{\wh}{\operatorname{Wh}}
\newcommand{\crossing}{\raisebox{-.2\height}{\includegraphics[scale=.2]{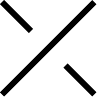}} \null}
\newcommand{\zsmooth}{\raisebox{-.2\height}{\includegraphics[scale=.2]{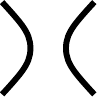}}\null}
\newcommand{\osmooth}{\raisebox{-.2\height}{\includegraphics[scale=.2]{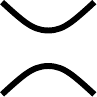}}\null}
\newcommand{\kcap}{\includegraphics[scale=.2,trim=0 .2cm  -0.2cm 0]{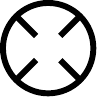}\null}
\newcommand{\kcup}{\includegraphics[scale=.15,trim=0 .2cm  -0.2cm 0]{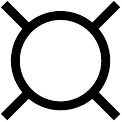}\null}
\newcommand{\ksmooth}{\includegraphics[scale=.2,trim=0 .2cm -0.2cm 0]{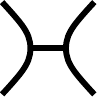}\null}
\newcommand{\ksquare}{\includegraphics[scale=.2,trim=-0.25cm .2cm 0.5cm 0]{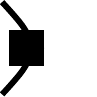}\null}
\newcommand{\sphere}{\hspace{0.5pt} \raisebox{-.25\height}{\includegraphics[scale=.315]{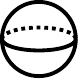}}\null}
\newcommand{\torus}{\hspace{0.25pt} \raisebox{-.3\height}{\includegraphics[scale=.315]{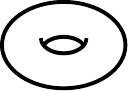}\hspace{0.25pt}}\null}
\newtheoremstyle{plain}
  {5pt plus3pt minus3pt}   
  {5pt plus3.3pt minus3.3pt}   
  {\itshape}  
  {0pt}       
  {\bfseries} 
  {.}         
  {5pt plus 1pt minus 1pt} 
  {}          
\newtheoremstyle{thm}{2.5 pt}{2.5 pt}{\itshape}{}{\bfseries}{.}{.5em}{}
\theoremstyle{thm}
\newtheorem{theorem}{Theorem}[section]
\newtheorem*{theorem*}{Theorem} 
\newtheorem{lemma}[theorem]{Lemma}
\newtheorem{proposition}[theorem]{Proposition}
\newtheorem*{question*}{Question} 
\newtheorem*{thom*}{Theorem \thetheorem~(Thom Conjecture)}
\newtheorem*{mingenusprob*}{The Minimal Genus Problem}
\newtheoremstyle{def}{2.5 pt}{2.5 pt}{}{}{\bfseries}{.}{.5em}{}
\theoremstyle{def}
\newtheorem{definition}[theorem]{Definition}
\newtheorem{remark}[theorem]{Remark}
\newtheoremstyle{example}
  {2pt plus3.3pt minus3.6pt}   
  {2pt plus3.3pt minus3.3pt}   
  {}  
  {0pt}       
  {\bfseries} 
  {.}         
  {5pt plus 1pt minus 1pt} 
  {}          
  \theoremstyle{example}
\newtheorem{example}[theorem]{Example}
\newtheorem{exercise}[theorem]{Exercise}
\author{\vspace{-.1in}\small Kyle Hayden}
\title{\large Lecture notes on link homologies and knotted surfaces}
\patchcmd{\@maketitle}{\LARGE \@title}{\fontsize{16}{19.2}\selectfont\@title}{}{}
\begin{document}

%
%
%
%
%
%
%
%
%


\renewcommand{\thesection}{\arabic{section}}

%
%
%
%
%


\

\vspace{-.10in}

\begingroup
\def\uppercasenonmath#1{}
\let\MakeUppercase\relax
\maketitle
\endgroup
\thispagestyle{empty}

%

\vspace{-.15in}


\begin{center} \begin{minipage}{.85\linewidth}\footnotesize

\textsc{Abstract.}  Link homology theories (such as knot Floer homology and Khovanov homology) have become indispensable tools for studying knots and links, including powerful 4-dimensional obstructions. These notes, based on lectures given at the 2024 Georgia Topology Summer School,  discuss what these toolkits say about surfaces in 4-space themselves, via the homomorphisms assigned to link cobordisms. We begin with a brief overview of these theories (focusing on their shared formal properties) and survey some of their applications to knotted surfaces. Afterwards, we give an introduction to Khovanov homology (with an eye towards its cobordism maps), discuss hands-on computational techniques for Khovanov and Bar-Natan homology, and outline the role of the Bar-Natan category in this story. 

\end{minipage}
\end{center}




\bigskip
\bigskip
\bigskip

\section{Introduction and survey}\label{hsec:intro}

\smallskip

In these lecture notes, we will be interested in \emph{link homology theories}. For the purposes of this first lecture, we will avoid fussing with details in order to convey the bigger picture. Formally, we will view a link homology theory $\hh$ as a functor $\Link^+ \to \Mod_\rcal$, where $\Link^+$ is the category whose objects are oriented links $L \subset S^3$ and whose morphisms are oriented link cobordisms in $S^3 \times I$, and $\Mod_\rcal$ is the category of $\rcal$-modules.  (In these lectures, we most often let $\rcal$ be $\ff_2$, $\zz$, or a polynomial ring over one of these.) 
Here a \textit{link cobordism} $\Sigma: L_0 \to L_1$ is a smooth, compact, oriented, properly embedded surface $\Sigma$ in $S^3 \times I$  whose boundary is a pair of oriented links  $(-S^3,-L_0) \sqcup  (S^3,L_1)$. Note that these theories are assumed to be covariant functors, so that if $\Sigma = \Sigma_0 \circ \Sigma_1$, then
$$\hh(\Sigma)=\hh(\Sigma_0) \circ \hh(\Sigma_1).$$ 
As outlined below, these lectures are primarily concerned with Khovanov homology, but this introductory lecture also discusses link Floer homology. 

\smallskip

\paragraph{\textbf{Organization}}

These lectures are organized as follows: In the rest of \S\ref{hsec:intro}, we briefly introduce Khovanov homology and link Floer homology, summarize the basic functoriality properties of these link homology theories, and then survey a handful of applications of these to the study of knotted surfaces. In \S\ref{sec:khovanov}, we re-introduce Khovanov homology in greater detail and discuss techniques for computing the Khovanov cobordism maps, including many examples and exercises. In \S\ref{sec:tqft}, we introduce Bar-Natan homology and Bar-Natan's perspective on Khovanov homology, which enables us to give further applications and hint at the proofs of invariance (for both the homology theory itself as well as the cobordism maps).

\smallskip

\paragraph{\textbf{Conventions}}

 All surfaces will be smoothly embedded. Unless otherwise specified or made clear from context, submanifolds $\Sigma \subset X$ will be assumed to be neatly embedded, i.e., with $\partial \Sigma \subset \partial X$.

\smallskip

\paragraph{\textbf{Acknowledgements}}

The author thanks Yikai Teng and Isaac Sundberg for their important contributions to this lecture series. Some of the topics and exercises included here are influenced by various lectures, notes, and insights from Jesse Cohen, Robert Lipshitz, Kim Morrison, Taketo Sano, and Melissa Zhang, and the author is grateful to them for these valuable resources. Thanks also to the  anonymous referee for their helpful suggestions. Finally, thank you to the organizers of the 2024 Georgia Topology Summer School and Conference for their hospitality and for putting together  an excellent program. This work was supported in part by NSF grant DMS-2243128. 

\bigskip

\subsection{Link homology theories}

For the study of knotted surfaces, the most relevant link homology theories are Khovanov homology and link Floer homology. We write these as  $\hh=\kh$ and $\hh=\hfl$, respectively; these are umbrella terms, as each of these theories admit various specific flavors of invariants. 

We warn the reader that this overview will not attempt any sort of mathematically precise and consistent treatment of link homology theories in the abstract; instead, we will use $\hh$ as a placeholder when describing an unspecified link homology theory. This will be useful because different link homology theories (in all their various flavors) often share  structural similarities. For example, the theories discussed here all admits $\zz \oplus \zz$-bigradings
$$\hh(L) = \bigoplus_{(i,j) \in \zz \oplus \zz} \hh_{i,j}(L)$$
where one grading is \emph{homological} and the other grading is more closely tied to topological information.

Below we give a brief overview of these theories,  focusing on 
\begin{enumerate}[leftmargin=1.25cm,itemsep=-4pt]
\item describing the generators of the underlying chain complexes, with an aim of conveying the different flavor of these theories, and \smallskip

\item providing enough language to briefly survey the state of the art.
\end{enumerate} Throughout, we suggest references where the reader can find more detail.



\medskip

\subsection*{Khovanov homology}

 Khovanov homology was introduced in \cite{Khovanov}, with important variations of the theory introduced by Lee \cite{lee} and Bar-Natan \cite{barnatan}. Its construction can be understood as a lift of the Kauffman state sum model for the Jones polynomial \cite{Kauffman} to a chain complex. 
 
 This  construction takes as input a link diagram $\dcal$ with $n$ crossings and associates to it a ``cube of resolutions'' consisting of the $2^n$ diagrams obtained by resolving the crossings in all possible ways;  see Figure~\ref{fig:tref-complex-intro} for an example with the trefoil. In Khovanov's original theory, the associated chain complex $\ckh(\dcal)$ is built by replacing each resolved diagram (called a \emph{smoothing}) with a copy of $\acal^{\otimes k}$, where $k$ is the number of connected components  in the resolved diagram and  $\acal$ is a certain rank-2 Frobenius algebra $\acal=\mathcal{R}\langle \mathbf{1}, \mathbf{x} \rangle$. Therefore, one may view the generators of $\ckh(\dcal)$ as smoothings of $\dcal$ where each connected component  is labeled with a choice of $\bfo,\bfx \in \acal$.  The differential on $\ckh(\dcal)$ is defined so that the edges of the cube of resolution become component arrows of the differential in an appropriate way, pointing from a given smoothing $\sigma$ to a subset of those  smoothings that differ from $\sigma$ at exactly one crossing. 
 

The resulting homology theory splits along a \emph{homological} grading $h$ and a \emph{quantum} grading $q$, so that we may write
$$ \kh(L) = \bigoplus_{(h,q)}  \kh^{h,q}(L).$$
Various choices in the construction (especially in terms of the underlying Frobenius algebra) yield variants of Khovanov's original theory. As alluded to above, in \S\ref{sec:tqft} we will  consider one such variant introduced by Bar-Natan, denoted $\bn(L)$.

\begin{figure}\center
\includegraphics[width=.9\linewidth]{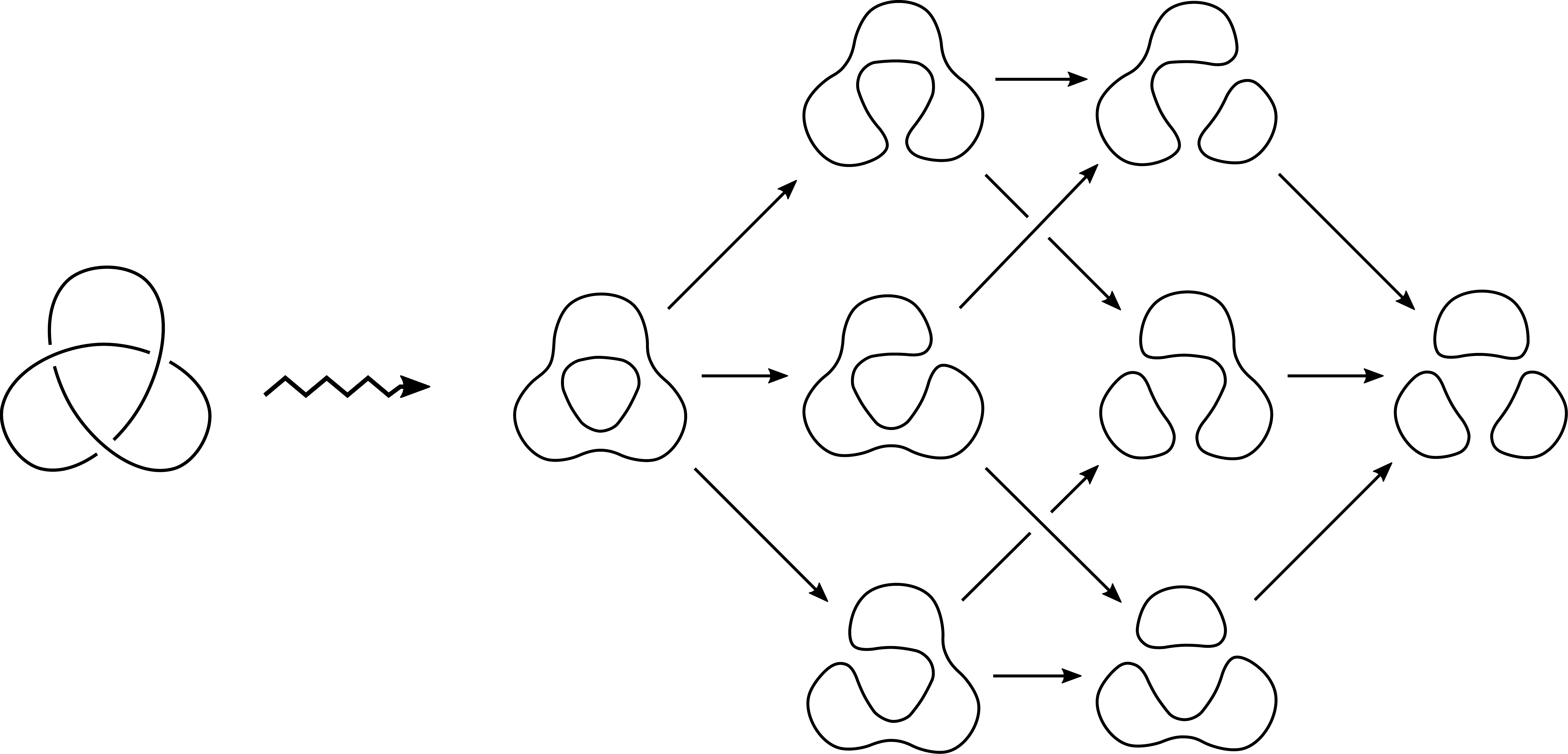}
\caption{The cube of resolutions for a diagram of the trefoil.}\label{fig:tref-complex-intro}
\end{figure}



%

\medskip \paragraph{\textbf{Link Floer homology}}

Heegaard Floer homology was introduced by Ozsv\'ath--Szab\'o in \cite{oz-sz:three}, with link Floer homology introduced shortly after by Ozsv\'ath--Szab\'o \cite{oz-sz:hfk} and Rasmussen \cite{rasmussen:thesis}. This theory comes in a variety of flavors. For our purposes, the relevant ones are the \emph{hat} flavor $\widehat \hfl(L)$ over $\ff_2$ and the richer \emph{minus} flavor $\hflm(L)$ over $\ff_2[U,V]$. When discussing properties common to both flavors, we will use $\hfl^\circ$ to refer to both theories simultaneously. For example,  both theories decompose along a \emph{Maslov} grading $m$ and an \emph{Alexander} grading $a$:
$$
\hflc(L) = \bigoplus_{(m,a)} \hflc_m(L,a)
$$



In the case of a knot $K\subset S^3$, the necessary input is a \emph{doubly-pointed Heegaard diagram},  which is a tuple $\dcal=(\Sigma,\alpha,\beta,w,z)$ where
\begin{enumerate}[label=(\roman*),itemsep=-2pt]
\item $(\Sigma, \alpha , \beta)$ is a Heegaard diagram for $S^3$ with collections of compressing curves $\alpha=(\alpha_1,\ldots,\alpha_g)$ and $\beta=(\beta_1,\ldots,\beta_g)$ for $g=g(\Sigma)$,
\item $w$ and $z$ are basepoints in $\Sigma \setminus (\alpha \cup \beta)$,
\item $K$ is obtained from a union of (oriented) arcs $a \subset \Sigma  \setminus \alpha$ and $b = \Sigma \setminus \beta$, where $a$ goes from $w$ to $z$ and $b$ goes from $z$ to $w$, by pushing these arcs' interiors slightly into the $\alpha$- and $\beta$-handlebodies, respectively.\footnote{Therefore, with $\Sigma$ oriented as the boundary of the $\alpha$-handlebody, the oriented knot $K$ intersects $\Sigma$ positively at $z$ and negatively at $w$.}
\end{enumerate}
An example of a doubly-pointed Heegaard diagram for the trefoil is given in Figure~\ref{fig:heeg-tref}. This admits a natural generalization to multi-pointed Heegaard diagrams for oriented links in $S^3$, where each component has at least one pair of $w$ and $z$ basepoints.

\begin{figure}[tb]
\center
\def\svgwidth{.8\linewidth}
\begingroup%
  \makeatletter%
  \providecommand\color[2][]{%
    \errmessage{(Inkscape) Color is used for the text in Inkscape, but the package 'color.sty' is not loaded}%
    \renewcommand\color[2][]{}%
  }%
  \providecommand\transparent[1]{%
    \errmessage{(Inkscape) Transparency is used (non-zero) for the text in Inkscape, but the package 'transparent.sty' is not loaded}%
    \renewcommand\transparent[1]{}%
  }%
  \providecommand\rotatebox[2]{#2}%
  \newcommand*\fsize{\dimexpr\f@size pt\relax}%
  \newcommand*\lineheight[1]{\fontsize{\fsize}{#1\fsize}\selectfont}%
  \ifx\svgwidth\undefined%
    \setlength{\unitlength}{567.69034799bp}%
    \ifx\svgscale\undefined%
      \relax%
    \else%
      \setlength{\unitlength}{\unitlength * \real{\svgscale}}%
    \fi%
  \else%
    \setlength{\unitlength}{\svgwidth}%
  \fi%
  \global\let\svgwidth\undefined%
  \global\let\svgscale\undefined%
  \makeatother%
  \begin{picture}(1,0.35102938)%
    \lineheight{1}%
    \setlength\tabcolsep{0pt}%
    \put(0,0){\includegraphics[width=\unitlength,page=1]{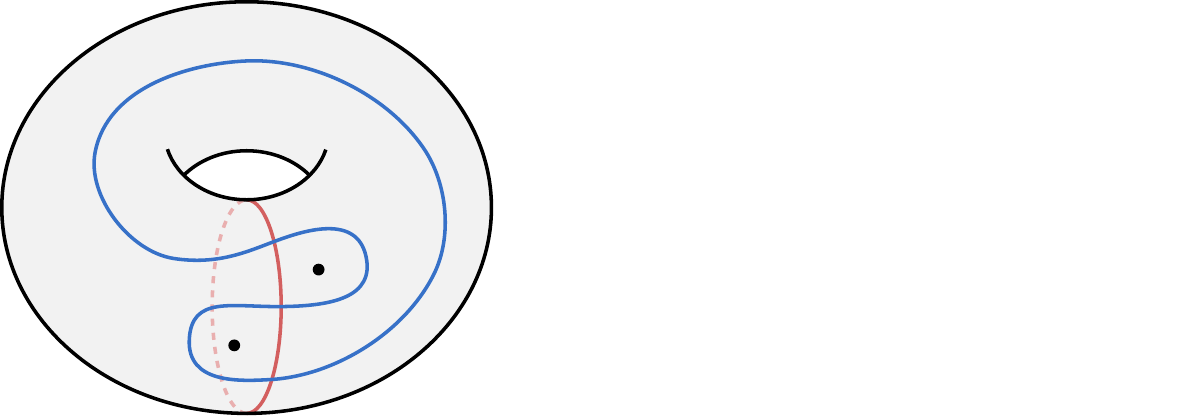}}%
    \put(0.2127958,0.06816674){\color[rgb]{0,0,0}\makebox(0,0)[t]{\smash{\begin{tabular}[t]{c}$z$\end{tabular}}}}%
    \put(0.2846849,0.13185808){\color[rgb]{0,0,0}\makebox(0,0)[t]{\smash{\begin{tabular}[t]{c}$w$\end{tabular}}}}%
    \put(0.21672683,0.10991397){\color[rgb]{0.75686275,0.20392157,0.20392157}\makebox(0,0)[t]{\smash{\begin{tabular}[t]{c}$\alpha_1$\end{tabular}}}}%
    \put(0.09005472,0.13681754){\color[rgb]{0.18823529,0.38039216,0.67058824}\makebox(0,0)[t]{\smash{\begin{tabular}[t]{c}$\beta_1$\end{tabular}}}}%
    \put(0,0){\includegraphics[width=\unitlength,page=2]{heeg-diagrams.pdf}}%
    \put(0.66856689,0.05541622){\color[rgb]{0,0,0}\makebox(0,0)[t]{\smash{\begin{tabular}[t]{c}$b$\end{tabular}}}}%
    \put(0.65040981,0.12066827){\color[rgb]{0.4,0.4,0.4}\makebox(0,0)[t]{\smash{\begin{tabular}[t]{c}$a$\end{tabular}}}}%
    \put(0,0){\includegraphics[width=\unitlength,page=3]{heeg-diagrams.pdf}}%
  \end{picture}%
\endgroup%

\caption{A doubly-pointed Heegaard diagram for the trefoil (left) and a depiction of the corresponding trefoil (right).}\label{fig:heeg-tref}
\end{figure}

To such a diagram one associates a chain complex $\widehat\cfl(\dcal)$ whose generators can be understood diagrammatically as  (unordered) $g$-tuples of intersection points  $x=[x_1,\ldots,x_g]$ where $x_i \in \alpha_i \cap \beta_{\sigma(i)}$ for some permutation $\sigma$ of $\{1,\ldots,g\}$; this ensures that each $\alpha$-curve and each $\beta$-curve contribute exactly one point to the generator $x$.  Geometrically, these generators correspond to intersection points between the $g$-dimensional tori $$\mathbb{T}_\alpha=\alpha_1 \times \cdots \times \alpha_g \qquad \text{ and } \qquad \mathbb{T}_\beta=\beta_1 \times \cdots \times \beta_g$$ in the symmetric product $\operatorname{Sym}^g(\Sigma)=\Sigma^{\times g} / S_g$, where $S_g$ is the symmetric group on $g$ elements. The differential $\hat \partial$ arises from this more geometric setting as a certain count of holomorphic ``Whitney'' disks between these intersection points\footnote{The ambient space $\operatorname{Sym}^g(\Sigma)$ can be equipped with a complex structure induced by a choice of  complex structure on $\Sigma$.}; see Figure~\ref{fig:heeg-disk} for a schematic. This construction is modeled on Floer's variant of Morse homology for pairs of Lagrangian submanifolds of a symplectic manifold  \cite{floer:lagr}, where the tori $\mathbb{T}_\alpha,\mathbb{T}_\beta \subset \operatorname{Sym}^g(\Sigma)$ play the roles of the Lagrangians. (It was subsequently shown that  $\operatorname{Sym}^g(\Sigma)$ admits symplectic forms for which $\mathbb{T}_\alpha$ and $\mathbb{T}_\beta$ are indeed Lagrangian and such that Heegaard/link Floer homology arises as a suitable version Lagrangian Floer homology \cite{perutz}.) For a beautiful historical account of this theory's development, see the survey by Greene  \cite{greene:floer}. 

\begin{figure}
\begin{minipage}{.35\linewidth}
\captionsetup{width=\linewidth}\caption{Schematic depiction of a holomorphic Whitney disk from $x$ to $y$, including boundary conditions.}\label{fig:heeg-disk}
\end{minipage}  \hfill  \begin{minipage}{.525\linewidth} \def\svgwidth{\linewidth}
\begingroup%
  \makeatletter%
  \providecommand\color[2][]{%
    \errmessage{(Inkscape) Color is used for the text in Inkscape, but the package 'color.sty' is not loaded}%
    \renewcommand\color[2][]{}%
  }%
  \providecommand\transparent[1]{%
    \errmessage{(Inkscape) Transparency is used (non-zero) for the text in Inkscape, but the package 'transparent.sty' is not loaded}%
    \renewcommand\transparent[1]{}%
  }%
  \providecommand\rotatebox[2]{#2}%
  \newcommand*\fsize{\dimexpr\f@size pt\relax}%
  \newcommand*\lineheight[1]{\fontsize{\fsize}{#1\fsize}\selectfont}%
  \ifx\svgwidth\undefined%
    \setlength{\unitlength}{414.71489854bp}%
    \ifx\svgscale\undefined%
      \relax%
    \else%
      \setlength{\unitlength}{\unitlength * \real{\svgscale}}%
    \fi%
  \else%
    \setlength{\unitlength}{\svgwidth}%
  \fi%
  \global\let\svgwidth\undefined%
  \global\let\svgscale\undefined%
  \makeatother%
  \begin{picture}(1,0.36736612)%
    \lineheight{1}%
    \setlength\tabcolsep{0pt}%
    \put(0,0){\includegraphics[width=\unitlength,page=1]{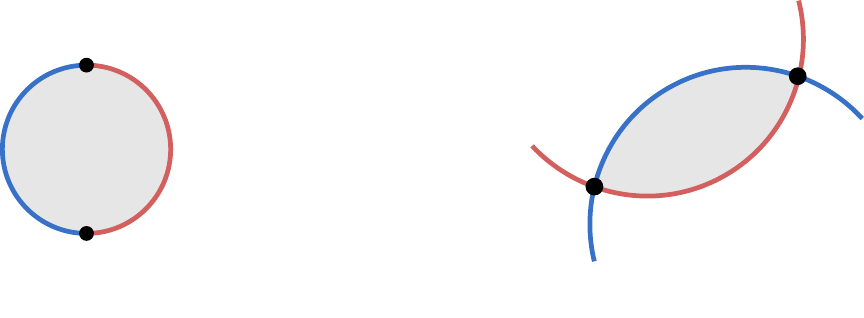}}%
    \put(0.0965077,0.04099934){\color[rgb]{0,0,0}\makebox(0,0)[t]{\smash{\begin{tabular}[t]{c}$-i$\end{tabular}}}}%
    \put(0.1001247,0.31588745){\color[rgb]{0,0,0}\makebox(0,0)[t]{\smash{\begin{tabular}[t]{c}$i$\end{tabular}}}}%
    \put(0.71583209,0.10847242){\color[rgb]{0,0,0}\makebox(0,0)[t]{\smash{\begin{tabular}[t]{c}$x$\end{tabular}}}}%
    \put(0.59522543,0.2264216){\color[rgb]{0.82745098,0.37254902,0.37254902}\makebox(0,0)[t]{\smash{\begin{tabular}[t]{c}$\mathbb{T}_\alpha$\end{tabular}}}}%
    \put(0.68517647,0.00773547){\color[rgb]{0.21568627,0.44313725,0.78431373}\makebox(0,0)[t]{\smash{\begin{tabular}[t]{c}$\mathbb{T}_\beta$\end{tabular}}}}%
    \put(0.94184775,0.23063059){\color[rgb]{0,0,0}\makebox(0,0)[t]{\smash{\begin{tabular}[t]{c}$y$\end{tabular}}}}%
    \put(0,0){\includegraphics[width=\unitlength,page=2]{holo-disks.pdf}}%
  \end{picture}%
\endgroup%
 
\end{minipage} \quad \
\end{figure}

In the hat flavor, the differential only counts holomorphic Whitney disks that avoid the subspaces $\{w\} \times \operatorname{Sym}^{g-1}(\Sigma)$ and $\{z\} \times \operatorname{Sym}^{g-1}(\Sigma)$. This restriction is dropped in the minus flavor's chain complex $\cflm$  over $\rcal=\ff_2[U,V]$, where Whitney disks' intersections with  $\{w\} \times \operatorname{Sym}^{g-1}(\Sigma)$ and $\{z\} \times \operatorname{Sym}^{g-1}(\Sigma)$ are incorporated into the differential via powers of the variables $U$ and $V$, respectively.

\begin{remark}
    In much of the literature, it is sufficient to work with a version of $\cflm$ over $\ff_2[U]$ whose differential only counts Whitney disks avoiding $\{z\} \times \operatorname{Sym}^{g-1}(\Sigma)$. 
\end{remark}




\smallskip

\subsection{Link cobordisms and functoriality}\label{subsec:functoriality}
Suppose that $\Sigma \subset S^3 \times [0,1]$ is a link cobordism from $L_0 \subset S^3 \times 0$ to $L_1 \subset S^3 \times 1$. After a perturbation of $\Sigma$ (rel boundary), we may assume that the projection $f: S^3 \times [0,1] \to [0,1]$ restricts to a Morse function on $\Sigma$ and that all critical points have distinct values. Observe that if there are no critical points, then $\Sigma$ is the trace of an isotopy between $L_0$ and $L_1$. If there is at least one critical point, we may choose regular values $t_0=0, \ldots,t_n=1$ in $[0,1]$ such that each $t_i$ and $t_{i+1}$ are separated by exactly one critical value. The preimages of these points are links 
$$L_i=(f|_\Sigma)^{-1}(t_i) \ = \Sigma \cap \left(S^3 \times \{t_i\}\right)$$ 
decomposing $\Sigma$ into a union of subcobordisms $\Sigma=\Sigma_1 \cup \cdots \cup \Sigma_n$ defined by 
$$\Sigma_i = \Sigma \cap \left(S^3 \times [t_{i-1},t_{i}]\right).$$


An example is depicted schematically in Figure~\ref{fig:cob-cut-up}. Each of these $\Sigma_i$ consists of exactly one of the elementary cobordisms in Figure~\ref{fig:elementary-cobs}(a)-(d) (called \emph{birth}, \emph{merging saddle}, \emph{splitting saddle}, and \emph{death}, respectively) and some number of cylinders as in Figure~\ref{fig:elementary-cobs}(e). 
In Khovanov homology, we further decompose a link cobordism into a \emph{movie} of link diagrams, each related by one of the following:


\begin{wrapfigure}[7]{r}{.325\linewidth}
\vspace{-0.25in}
\centering
\includegraphics[width=.9\linewidth]{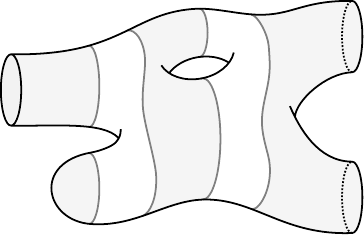}
\captionsetup{width=.835\linewidth} \caption{Decomposing a cobordism into elementary pieces.}\label{fig:cob-cut-up}
\end{wrapfigure}

\

\vspace{-20pt}

\begin{enumerate}[label=(\roman*),leftmargin=1.5cm,itemsep=-2pt]
\item birth
\item saddles (either merging or splitting)
\item death
\item Reidemeister I, II, or III moves
\item planar isotopy.
\end{enumerate}

\medskip

\noindent An example is shown in Figure~\ref{fig:tref-movie}.

\begin{figure}[tb]
\center
\def\svgwidth{.8\linewidth}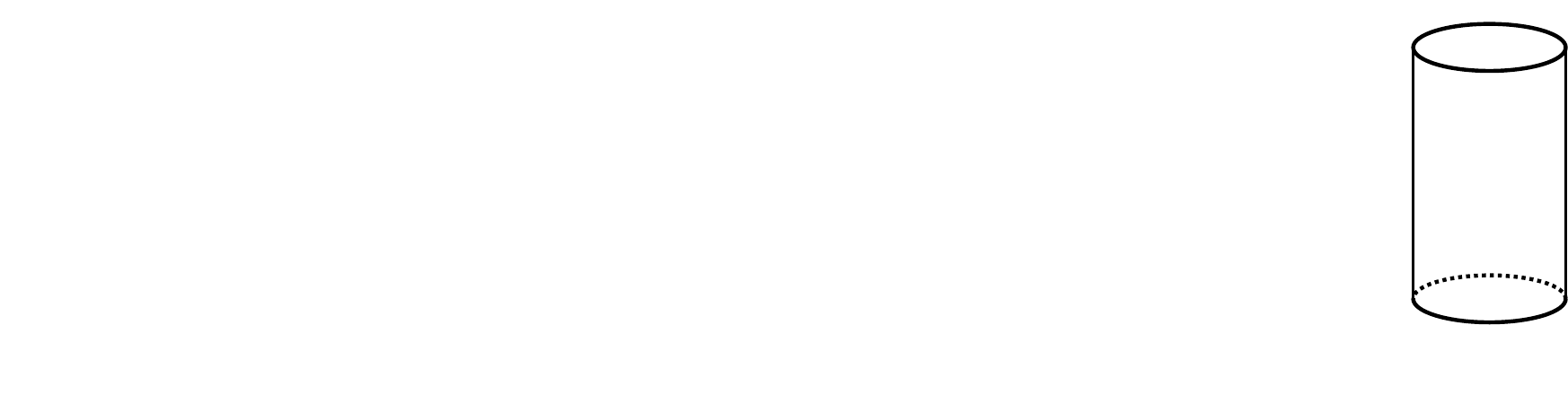
\caption{From left to right: birth, merging saddle, splitting saddle, death, and cylinder.}\label{fig:elementary-cobs}
\end{figure}

\begin{figure}[h]
\center
\def\svgwidth{.675\linewidth}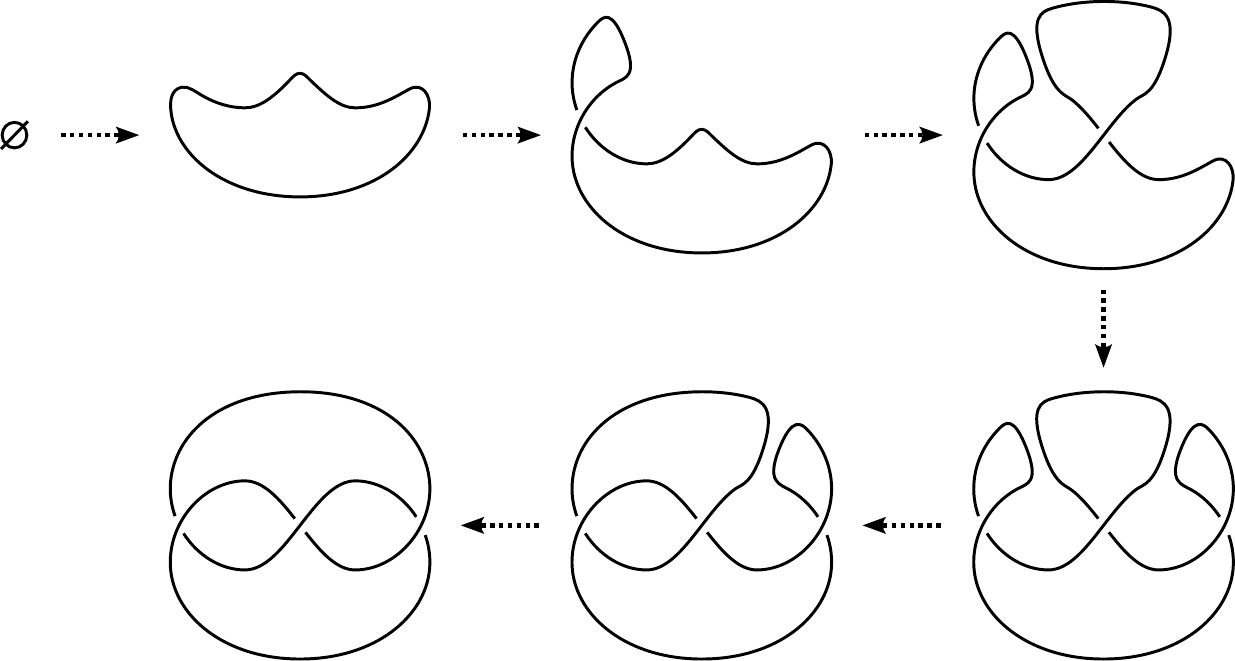
\caption{A movie 
of a link cobordism from the empty link  to  the left-handed trefoil.}\label{fig:tref-movie}
\end{figure}


\begin{wrapfigure}[8]{r}{.315\linewidth}
\hfill \def\svgwidth{.9\linewidth}
\begingroup%
  \makeatletter%
  \providecommand\color[2][]{%
    \errmessage{(Inkscape) Color is used for the text in Inkscape, but the package 'color.sty' is not loaded}%
    \renewcommand\color[2][]{}%
  }%
  \providecommand\transparent[1]{%
    \errmessage{(Inkscape) Transparency is used (non-zero) for the text in Inkscape, but the package 'transparent.sty' is not loaded}%
    \renewcommand\transparent[1]{}%
  }%
  \providecommand\rotatebox[2]{#2}%
  \newcommand*\fsize{\dimexpr\f@size pt\relax}%
  \newcommand*\lineheight[1]{\fontsize{\fsize}{#1\fsize}\selectfont}%
  \ifx\svgwidth\undefined%
    \setlength{\unitlength}{282.93733131bp}%
    \ifx\svgscale\undefined%
      \relax%
    \else%
      \setlength{\unitlength}{\unitlength * \real{\svgscale}}%
    \fi%
  \else%
    \setlength{\unitlength}{\svgwidth}%
  \fi%
  \global\let\svgwidth\undefined%
  \global\let\svgscale\undefined%
  \makeatother%
  \begin{picture}(1,0.75506567)%
    \lineheight{1}%
    \setlength\tabcolsep{0pt}%
    \put(0,0){\includegraphics[width=\unitlength,page=1]{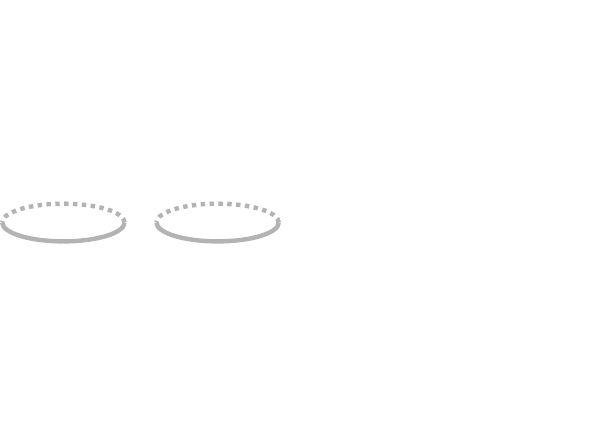}}%
    \put(0.63736113,0.40036719){\color[rgb]{0,0,0}\makebox(0,0)[t]{\smash{\begin{tabular}[t]{c}$\approx$\end{tabular}}}}%
    \put(0,0){\includegraphics[width=\unitlength,page=2]{birth-death.pdf}}%
  \end{picture}%
\endgroup%
 \vspace{-.05in}

\caption{}\label{fig:birth-death}
\end{wrapfigure}

The core strategy for defining cobordism maps $\hh(\Sigma)$ is to define chain maps $\chh(\Sigma_i)$ for these elementary cobordisms. Invariance under isotopy is then established  by comparing the maps induced by cobordisms that differ by appropriate elementary changes in these decompositions; see Figure~\ref{fig:birth-death} for a schematic example. (We note that cylinders corresponding to nontrivial isotopies generally induce nontrivial maps.)
The invariance statements for our two main theories are given below:

\begin{theorem}[Jacobsson, Bar-Natan, Khovanov, Morrison--Walker--Wedrich]
Let $\Sigma$  be a link cobordism in $S^3 \times I$ between links $L_0,L_1 \subset S^3$. There  is an induced map $$\kh(\Sigma): \kh(L_0) \to \kh(L_1)$$ that is bigraded of degree $(h,q)=(0,\chi(\Sigma))$,  well-defined up to sign, and invariant under isotopy of $\Sigma$ rel boundary.
\end{theorem}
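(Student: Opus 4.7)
The plan is to present $\Sigma$ as a movie of link diagrams, assign chain-level maps to its elementary pieces, and then establish invariance via the Carter--Saito movie move theorem. After a generic perturbation of $\Sigma$ rel $\partial \Sigma$, decompose $\Sigma$ into a sequence of the elementary cobordisms of Figure~\ref{fig:elementary-cobs} combined with Reidemeister moves and planar isotopies, as described in \S\ref{subsec:functoriality}. I would assign a chain map in $\ckh$ to each elementary piece: Morse moves use the Frobenius algebra structure on $\acal = \rcal\langle \bfo, \bfx\rangle$ (birth = unit $1 \mapsto \bfo$; death = counit $\bfo \mapsto 0, \bfx \mapsto 1$; merging saddle = multiplication; splitting saddle = comultiplication); the Reidemeister moves are assigned the canonical chain homotopy equivalences that appear in the proof of Reidemeister invariance of $\kh$; planar isotopies are assigned the identity. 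The map $\kh(\Sigma)$ is then defined as the composite on homology.

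The bigrading claim $(h,q) = (0, \chi(\Sigma))$ follows by direct accounting. Each Morse move shifts $q$ by its Euler characteristic contribution ($+1$ for a birth or a death, $-1$ for a saddle), and summing over the decomposition gives $\chi(\Sigma)$. The Reidemeister and planar isotopy pieces are $(0,0)$-bigraded chain equivalences, using the standard quantum grading shifts built into the Khovanov complex to absorb the writhe change. Each elementary chain map has degree $0$ in the homological direction, so the composite preserves $h$.

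The main content is invariance. Two movies presenting isotopic link cobordisms are related by a finite list of \textbf{movie moves} (Carter--Saito), so it suffices to verify that each such move induces chain-homotopic maps (up to sign). This is the main obstacle: there are roughly fifteen movie moves, some routine and some subtle (notably those involving a Reidemeister III sliding past a saddle, or the creation/annihilation of cancelling Reidemeister II pairs), and each requires a local but genuinely computational check combining the Frobenius axioms with the explicit Reidemeister chain equivalences. Once these checks are in hand, invariance under changes in the Morse decomposition (e.g.\ the cancellation in Figure~\ref{fig:birth-death}) is immediate, since such changes correspond to particular movie moves. Over $\zz$ an overall sign ambiguity survives this analysis --- this is the ``well-defined up to sign'' caveat --- and canonicalizing the signs is precisely what requires the disk-like $4$-category refinement of Morrison--Walker--Wedrich; working over $\ff_2$ bypasses the issue entirely.
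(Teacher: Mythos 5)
Your overall strategy---decompose $\Sigma$ into a movie, build the chain map out of the Frobenius structure on $\acal$ and the Reidemeister equivalences, account for gradings via Euler characteristic, and invoke the Carter--Saito movie move theorem for invariance---is the classical Jacobsson/Khovanov/Bar-Natan approach, and the grading bookkeeping you describe is correct. (For the movie-move verifications, Bar-Natan's framework offers a shortcut you don't mention: one often avoids direct computation by showing the relevant local formal complexes in $\Kobh$ have no automorphisms other than $\pm\id$, so that any two local chain maps must agree up to sign. This is a convenience rather than a necessity, so not a gap.)

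There is, however, a genuine gap, and it lies exactly where you bring in Morrison--Walker--Wedrich. The Carter--Saito movie move theorem classifies isotopies of link cobordisms in $\rr^3 \times I$; it does not see isotopies that push a strand of a link across the point at infinity of $S^3$. The theorem as stated concerns cobordisms in $S^3 \times I$, and your argument, as written, only establishes well-definedness for $\rr^3 \times I$. The missing ingredient is invariance under the \emph{sweep-around move}, and this is precisely the contribution of Morrison--Walker--Wedrich that the theorem is using. You instead attribute their role to canonicalizing signs. That is off the mark for this statement: the theorem already carries the ``up to sign'' caveat, so the sign refinement is not being invoked; what is being invoked is the $S^3 \times I$ upgrade, which your movie-move argument cannot supply on its own. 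Relatedly, your claim that working over $\ff_2$ ``bypasses the issue entirely'' is true only for the sign ambiguity---the sweep-around move still has to be handled regardless of coefficient ring.
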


\begin{remark}(a) The original proofs of invariance (up to sign) in \cite{jacobsson,barnatan,khovanov:invariant} only hold for surfaces in $\rr^3 \times I$. This was promoted to surfaces in $S^3 \times I$ in \cite{morrison-walker-wedrich} by establishing invariance under the \emph{sweep-around move}, which involves pushing a strand of a link past the point at infinity in $S^3$. Their proof was adapted to the Bar-Natan and Lee deformations in \cite[Proposition 3.7]{lipshitz-sarkar:mixed}.

(b) We will often wish to study smooth, compact, oriented, properly embedded surfaces $\Sigma \subset B^4$ bounded by a link in $\partial B^4$. In this setting, we may remove a small open ball from the interior of $B^4 \setminus \Sigma$ and view $\Sigma$ as a subset of the resulting copy of $S^3 \times I$.  Moreover, given an isotopy of $B^4$ carrying $\Sigma$ to another surface, we may modify the isotopy (if necessary) so that it is supported away from a small open ball, in which case the surfaces and isotopy may be realized in $S^3 \times I$. 
\end{remark}



To achieve functoriality in link Floer homology, one must work with \emph{decorated} links and link cobordisms. Roughly speaking, a link $L$ must be equipped with two sets of basepoints, denoted $w,z \subset L$. A cobordism $\Sigma : L_0 \to L_1$ must be decomposed into two subsurfaces $\Sigma_w$ and $\Sigma_z$ containing the sets of $w$- and $z$-basepoints, respectively, in $L_i$. 

\begin{theorem}[Juh\'asz, Juh\'asz--Marengon, Zemke]\label{thm:hfl-functorial}
Let $\Sigma$ be a decorated link cobordism in  $ S^3 \times I$  between links $L_0,L_1 \subset S^3$. There  is an induced map $$\hfl^\circ(\Sigma): \hfl^\circ(L_0) \to \hfl^\circ(L_1)$$ that is well-defined and invariant under isotopy of $\Sigma$ rel boundary.
\end{theorem}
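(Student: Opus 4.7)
The plan follows the same strategy sketched for Khovanov homology in \S\ref{subsec:functoriality}: decompose the decorated cobordism into elementary pieces, define a chain map for each piece, and then verify that the total composition is independent of all the choices made. After a small perturbation of $\Sigma$ rel boundary, I would arrange that the projection $S^3 \times [0,1] \to [0,1]$ restricts to a Morse function on $\Sigma$ whose critical values are in general position relative to the dividing curves that separate $\Sigma_w$ from $\Sigma_z$. This decomposes $\Sigma$ into elementary cobordisms of the usual five topological types (births, merging/splitting saddles, deaths, and trivial cylinders supporting an isotopy) together with \emph{decoration-only} slabs that either move a basepoint along a link component or add/remove a pair of opposite basepoints.

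For each elementary piece one defines a chain map on the multi-pointed link Floer chain complex. Births and deaths produce or absorb a canonical top-graded generator on an unknotted component via a stabilization of the Heegaard diagram; saddles correspond to band attachments and are implemented by the holomorphic triangle map associated to a Heegaard triple in which one $\beta$-curve has been handleslid across the band; nontrivial cylinders are realized by the standard sequence of Heegaard moves (isotopy, handleslide, stabilization), each inducing a continuation-type equivalence; and the decoration is handled by Zemke's \emph{quasi-stabilization} and \emph{free basepoint moving} maps, which add/remove basepoints and track their motion around the link components.

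Invariance then separates into two layers. Invariance of $\hfl^\circ(L)$ itself under the choice of multi-pointed Heegaard diagram is the standard Ozsv\'ath--Szab\'o result: any two such diagrams are related by a sequence of isotopies, handleslides, and (de)stabilizations, each of which induces a chain homotopy equivalence. Invariance of the cobordism map $\hfl^\circ(\Sigma)$ under the chosen decomposition is a Cerf-theoretic argument, reducing to a finite list of local moves---reordering independent critical points, cancelling a birth--saddle or saddle--death pair, exchanging handleslides among the bands, and sliding a decoration past a handle---together with a proof that the elementary maps commute (up to chain homotopy) under each such move.

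The main obstacle is exactly this last list of commutation and cancellation relations, particularly the interaction between the saddle maps and the Zemke basepoint-moving maps. Their verification is technical: it requires holomorphic triangle counts on carefully chosen Heegaard multi-diagrams together with the associativity property that allows two iterated triangle maps to be replaced by a single quadrilateral count, and this is the technical heart of the Juh\'asz--Marengon and Zemke papers. Once these local identities are in hand, any two decompositions of a given $\Sigma$---and, more generally, any two decompositions coming from isotopic cobordisms via a $1$-parameter family of Morse functions---are connected by a sequence of the permitted moves, so $\hfl^\circ(\Sigma)$ is well-defined and invariant under isotopy rel boundary.
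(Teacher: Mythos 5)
The paper does not actually prove this theorem; it is stated with attribution to Juh\'asz, Juh\'asz--Marengon, and Zemke, and the only proof-adjacent material in the notes is the schematic strategy of \S\ref{subsec:functoriality} (decompose $\Sigma$ via a Morse function, define chain maps for the elementary pieces, check independence of the decomposition), followed immediately by the observation that ``the details of establishing invariance differ sharply between different theories.'' Your sketch is consistent with that schematic and accurately captures the link-Floer-specific machinery from the cited sources --- holomorphic triangle maps for band/saddle moves, Zemke's quasi-stabilization and basepoint-moving maps for the decoration, and Cerf theory together with associativity of holomorphic polygon maps for invariance --- so it is a faithful outline of the argument even though no proof appears in the paper itself.
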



The details of establishing invariance  differ sharply between different theories, reflecting the fundamental differences in the theories' constructions.




\medskip

\subsection*{Duality} 

 Under mirroring $L \rightsquigarrow -L$, both  Khovanov homology and link Floer homology satisfy a duality isomorphism at the \emph{chain level}: $\chh(-L) \cong (\chh(L))^*$. In more detail, at the level of bigraded chain groups,  we have
$$
\chh_{i,j}(-L) \cong \operatorname{Hom}_\rcal(\chh_{-i,-j}(L), \rcal).
$$
The cobordism (chain) maps extend this duality: Given a link cobordism $\Sigma: L_0 \to L_1$, the diagram
$$\begin{tikzcd}
\chh(-L_1) \arrow[r] \arrow[d, "\chh(-\Sigma)"'] & \left( \chh(L_1) \right)^* \arrow[d, "\left(\chh(\Sigma)\right)^*"] \\
\chh(-L_0) \arrow[r]                          & \left( \chh(L_0) \right)^*                                      
\end{tikzcd}$$
commutes up to chain homotopy; here $-\Sigma$ denotes the time-reversed mirror of $\Sigma$ (whose link movie is obtained from that of $\Sigma$ by mirroring each frame and playing it backwards in time); see Figure~\ref{fig:tref-movie-mirror} for an example.

For details, see \cite{Khovanov,rasmussen:s} and \cite[Section 3.3]{jmz:exotic} (building on  \cite[Proposition 3.7]{oz-sz:hfk} and \cite[Theorem 3.5]{oz-sz:four}).
 
\begin{figure}
\center
\def\svgwidth{.67\linewidth}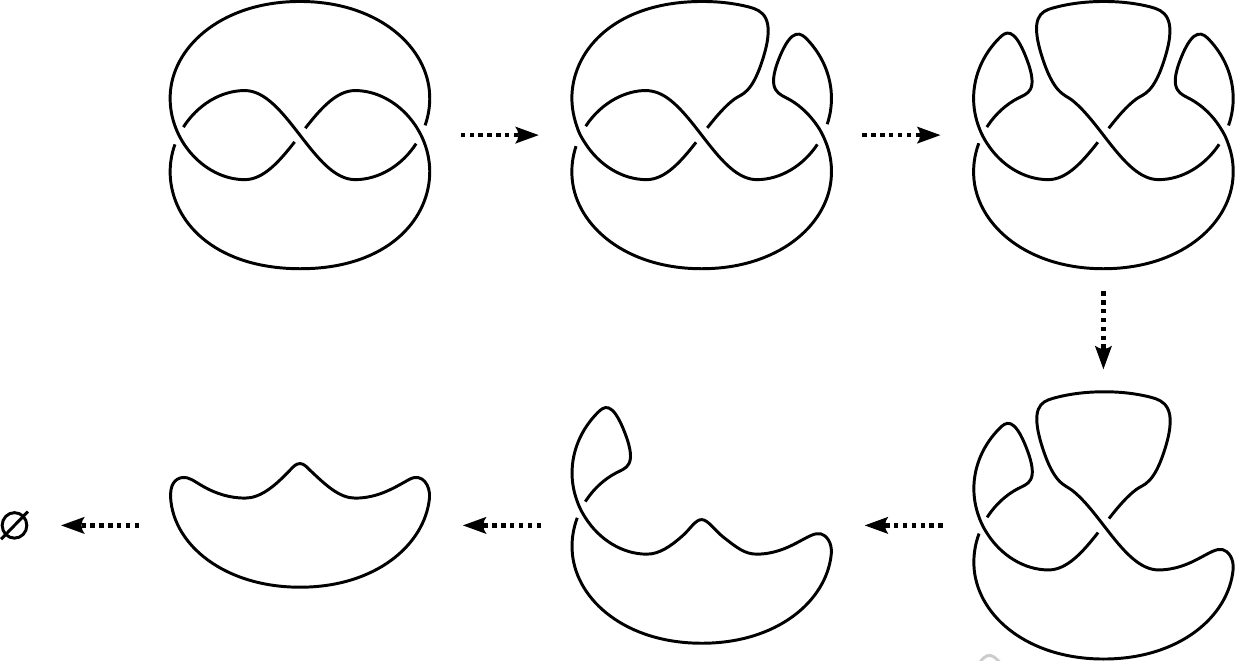
\caption{The time-reversed mirror of the movie from Figure~\ref{fig:tref-movie}.}\label{fig:tref-movie-mirror}
\end{figure}


\smallskip

\subsection{Closed surfaces}
Once functoriality has been established, it is natural to ask whether a link homology theory provides an effective invariant of knotted embeddings of closed surfaces $\Sigma$ in $\rr^4$ (or $S^4$) by fixing a copy of $S^3 \times I$ inside $\rr^4$ (or $S^4$) that contains $\Sigma$ and viewing $\Sigma$ as a link cobordism $\emptyset \to \emptyset$. Unfortunately (perhaps), it turns out that many variants of these theories behave uniformly on closed, connected, orientable surfaces of fixed genus.

\begin{example}[\cite{rasmussen:closed,tanaka:closed}] \label{ex:closed-kh}
If $\Sigma$ is any closed, connected, orientable, genus-$g$ surface in $S^3 \times I$, then 
\begin{align*}
&\kh(\Sigma)=\begin{cases} \times( \pm 2) & \ g = 1 \\ 0 &  \ g \neq 1\end{cases}& &
&\bn(\Sigma)=\begin{cases} \times \big(\! \pm 2^g \var^{(g-1)/2}\big) &  \text{odd }\, g \\ 0 &  \text{even } g.\end{cases}
\end{align*}
\end{example}

\begin{example}[\cite{zemke:gradings}]\label{ex:closed-hf} 
Let $\Sigma$ be any closed, connected, orientable, genus-$g$ surface in $S^3 \times I$. For $\widehat \hfl$ over $\ff_2$, we have
$$\widehat\hfl(\Sigma)=\begin{cases} \id & \ g = 0 \\ 0 &  \ g \neq 0.\end{cases}
$$
For $\hflm$ over $\rcal=\ff_2[U,V]$, if $\Sigma$ is decorated as a union of connected subsurfaces $\Sigma _w \cup \Sigma_z$ where $\partial \Sigma_w = \partial \Sigma_z$ is a single closed curve, then we have
\begin{align*}
\hflm(\Sigma): \rcal &\to \rcal   \\
 1 &\mapsto U^{g(\Sigma_w)} V^{g(\Sigma_z)}.
\end{align*}
\end{example}

Some valuable nonvanishing results follow from these rather formal properties; see \S\ref{subsec:boundary}. 


\begin{remark}
Inspired by the construction of the \emph{mixed invariant} in Heegaard Floer homology \cite{oz-sz:four}, Lipshitz and Sarkar \cite{lipshitz-sarkar:mixed} leverage the vanishing of the Bar-Natan (and Lee) cobordism maps for closed, non-orientable surfaces to construct an analogous invariant of closed, non-orientable surfaces $\Sigma$ with crosscap number $\geq 3$. They show that this invariant vanishes if $\Sigma$ is connected, though it remains possible that it is an effective invariant when $\Sigma$ is disconnected.\end{remark}


\medskip

\subsection{Surfaces with boundary}\label{subsec:boundary}
Despite their relatively uniform behavior for closed surfaces, it turns out that these theories provide very effective invariants of link cobordisms with nonempty boundary --- especially for surfaces in $B^4$ bounded by links in $S^3$. We say that a smooth, oriented, properly embedded surface $\Sigma \subset B^4$ is a \emph{slice surface} for an oriented link $L \subset S^3$ if $\partial \Sigma= L$ (and $\Sigma$ has no closed components).

\begin{theorem}[Juh\'asz--Zemke \cite{juhasz-zemke:disks}]
The cobordism maps on link Floer homology can distinguish pairs of slice disks $D,D' \subset B^4$ bounded by the same knot $K \subset S^3$, up to isotopy rel boundary. 
\end{theorem}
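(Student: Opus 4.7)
The strategy is to promote Theorem~\ref{thm:hfl-functorial} into a concrete invariant of decorated slice disks, and then to exhibit two slice disks for a common knot whose invariants differ. Fix basepoints $(w,z)$ on $K$ and, given a slice disk $D \subset B^4$ for $K$, choose a decoration $D = D_w \cup D_z$ compatible with these basepoints. Remove a small open ball from the interior of $B^4$ disjoint from $D$ to realize $D$ as a decorated link cobordism $\emptyset \to (K,w,z)$ inside a copy of $S^3 \times I$. By Theorem~\ref{thm:hfl-functorial}, this produces a map
$$\hflm(D): \hflm(\emptyset) \to \hflm(K),$$
well-defined up to isotopy of $D$ rel boundary. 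Since $\hflm(\emptyset) \cong \rcal$, the element $t_D := \hflm(D)(1) \in \hflm(K)$ is an isotopy invariant of the decorated slice disk. To prove the theorem, it therefore suffices to exhibit a knot $K$ and two decorated slice disks $D,D'$ for $K$ with $t_D \neq t_{D'}$.

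To produce such examples, I would start with a ribbon knot $K$ admitting two structurally different ribbon presentations (for instance, obtained by performing two inequivalent band sums that produce the same knot type, as for certain stevedore-like or two-bridge ribbon knots). Each ribbon presentation gives a ribbon disk that decomposes, via its natural Morse function, into a sequence of births followed by merging saddles and cylinders. By the functoriality in Section~\ref{subsec:functoriality}, the maps $\hflm(D)$ and $\hflm(D')$ are computed by composing the elementary maps associated to these pieces, yielding explicit cycles representing $t_D$ and $t_{D'}$ in a model of $\hflm(K)$ (e.g.\ from a grid diagram or a bordered/link-surgery description). One then compares the two elements, aiming to detect a difference in Maslov/Alexander bigrading, in the $\rcal$-module structure, or in the action of Zemke's $\Phi$- and $\Psi$-operators associated to the basepoints.

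The main obstacle is genuinely computational rather than structural: the functoriality package already reduces the theorem to the inequality $t_D \neq t_{D'}$ for some explicit pair, but verifying such an inequality demands a hands-on grip on the saddle and cylinder maps on $\cflm$. In practice, I would carry this out either through grid diagrams, where saddle maps count empty rectangles with prescribed corners, or via Zemke's band-map formulas, where the maps are given in terms of $\Phi$, $\Psi$, and natural identifications. A secondary subtlety is dependence on decoration: different decorations of the same disk generally yield different $t_D$, so the examples must be arranged so that even after fixing compatible decorations for $D$ and $D'$, the two elements remain provably distinct -- typically by exhibiting an algebraic feature (such as a nonzero $U^a V^b$-coefficient in a specified bigrading) that one element carries and the other does not.
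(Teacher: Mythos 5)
Your reduction is correct and matches the standard framing: functoriality (Theorem~\ref{thm:hfl-functorial}) gives a well-defined element $t_D = \hflm(D)(1) \in \hflm(K)$ for a decorated slice disk, so the theorem reduces to exhibiting a knot with two decorated disks whose elements differ. That part is fine, and you are right to flag the dependence on decoration as a subtlety that must be controlled.

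The gap is that you never actually produce a working example, and the one you gesture at is not the one that works. You propose taking a ribbon knot with two ``structurally different ribbon presentations'' -- this is the flavor of example used by Sundberg--Swann for Khovanov homology (the knot $9_{46}$ with two ribbon disks), but it is not what Juh\'asz--Zemke did, and there is no a priori reason your proposed search would terminate. Juh\'asz--Zemke's actual examples are \emph{roll-spun} disks: take $K \# -K$ (say $K = 3_1$), let $D$ be the standard slice disk, and let $D'$ be obtained from $D$ by attaching the self-concordance of $K \# -K$ traced out by the swallow-follow isotopy (Figure~\ref{fig:roll-spin}). The point of this construction is that the effect of the swallow-follow concordance on the cobordism map is algebraically controlled -- it corresponds to a computable operation (essentially a basepoint-moving/monodromy map on $\hflm(K \# -K)$), which is what makes the inequality $t_D \neq t_{D'}$ actually verifiable. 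Your proposal leaves exactly this part (``the main obstacle is genuinely computational'') open, but that \emph{is} the theorem: the existence of a pair for which the computation can be carried through and yields a genuine difference. A framework plus an unspecified example that might not work is a plan, not a proof.
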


The first slice disks distinguished in \cite{juhasz-zemke:disks} are called \emph{roll-spun} disks, based on a construction first studied by Fox \cite{fox:rolling} and developed further by Litherland \cite{litherland:deform}. For a concrete example, let $K$ be the right-handed trefoil knot and let $D$ be the standard slice disk bounded by $K \# -K$. Let $C$ be the concordance from $K \# -K$ to itself swept out by the \emph{swallow-follow isotopy} depicted in Figure~\ref{fig:roll-spin}. Attaching $C$ to $D$ yields a new slice disk $D'$ that can be distinguished from $D$ up to isotopy rel boundary via their cobordism maps on link Floer homology \cite[Theorem~5.4]{juhasz-zemke:disks}. 

\begin{figure}
\center
\includegraphics[width=.9\linewidth]{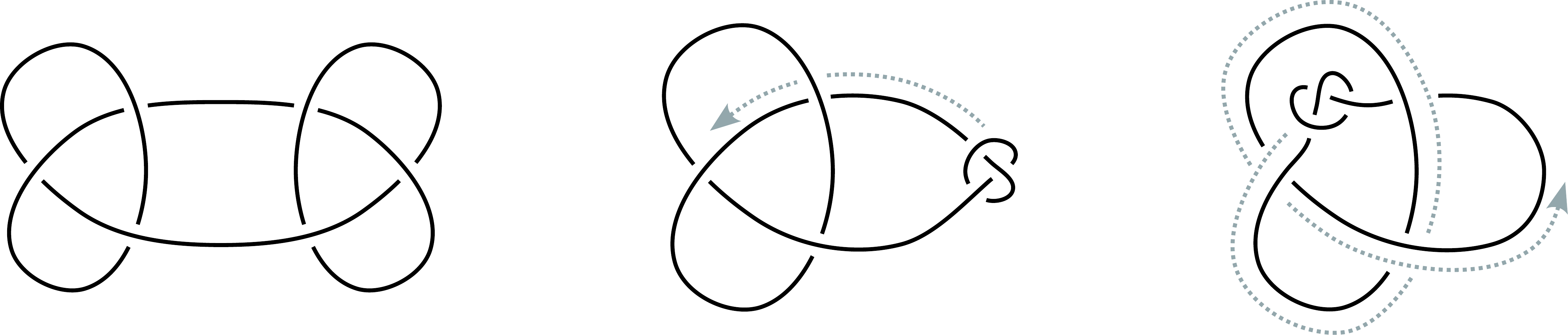}
\caption{An example of the swallow-follow isotopy that underlies roll-spinning.}\label{fig:roll-spin}
\end{figure}

A few years later, it was shown that Khovanov homology could also distinguish knotted slice disks up to isotopy rel boundary.

\begin{theorem}[Sundberg--Swann \cite{sundberg-swann}]
The cobordism maps on Khovanov homology can distinguish pairs of slice disks $D,D' \subset B^4$ bounded by the same knot $K \subset S^3$, up to isotopy rel boundary. 
\end{theorem}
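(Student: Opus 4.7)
The plan is to exhibit an explicit slice knot $K \subset S^3$ together with two slice disks $D_0, D_1 \subset B^4$ for $K$ whose Khovanov cobordism maps $\kh(D_i): \kh(\emptyset) = \mathcal{R} \to \kh(K)$ differ by more than a sign. Because $\kh(\emptyset) = \mathcal{R}$, each slice disk $D$ is captured by its \emph{Khovanov class} $\phi_D := \kh(D)(1) \in \kh(K)$, so the problem reduces to proving $\phi_{D_0} \neq \pm \phi_{D_1}$. Notice that the obvious ``double-up'' obstruction is unavailable: the $2$-sphere $-D_0 \cup D_1 \subset S^4$ has genus $0$, so by Example~\ref{ex:closed-kh} the composition $\kh(-D_0) \circ \kh(D_1)$ vanishes identically. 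The distinction must therefore be made \emph{inside} $\kh(K)$.

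Following the Juh\'asz--Zemke template, I would take $D_0$ to be a standard ribbon disk bounded by a well-chosen slice knot $K$ and set $D_1 := D_0 \cup C$, where $C : K \to K$ is a nontrivial self-concordance such as the swallow-follow/roll-spin concordance of Figure~\ref{fig:roll-spin}. The knot $K = T \# -T$ with $T$ the right-handed trefoil is a natural first target, since link Floer homology has already been shown to distinguish the resulting disks. By covariant functoriality, $\phi_{D_1} = \kh(C)(\phi_{D_0})$, so the task further reduces to showing that the automorphism $\kh(C) : \kh(K) \to \kh(K)$ moves $\phi_{D_0}$ modulo sign.

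The heart of the argument is then a hands-on chain-level computation: present $D_0$ and $C$ as movies of births, saddles, Reidemeister moves, and planar isotopies, and assemble the elementary cobordism maps reviewed in \S\ref{sec:khovanov}--\S\ref{sec:tqft} to obtain cycle-level representatives of $\phi_{D_0}$ and $\phi_{D_1}$ in $\ckh(K)$ --- or, more practically, in the Bar-Natan complex $\cbn(K)$ --- for a fixed diagram of $K$. Working in Bar-Natan's formalism is what makes the computation feasible: delooping and Gaussian elimination in the Bar-Natan category collapse the complex down to a small model in which the two cycles can be written explicitly and compared. To rule out the possibility that these cycles are homologous up to sign, I would exhibit an auxiliary distinguishing map out of $\kh(K)$ (or $\bn(K)$), for instance by pairing against a dual class via the duality $\chh_{i,j}(-K) \cong \operatorname{Hom}_\mathcal{R}(\chh_{-i,-j}(K), \mathcal{R})$ or by post-composing with a further cobordism $K \to L$ whose image visibly separates $\phi_{D_0}$ from $\pm \phi_{D_1}$.

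The hard part will be the bookkeeping in this middle step: nontrivial self-concordances admit only somewhat elaborate movie presentations, so the cobordism map on Khovanov chains is sensitive to every intermediate Reidemeister move, and keeping the resulting complex small enough to compute by hand requires a judicious choice of diagram and systematic use of the chain-homotopy simplifications built into Bar-Natan's framework. Once $\phi_{D_0}$ and $\phi_{D_1}$ are shown to represent different classes (up to sign) in $\kh(K)$, the Jacobsson--Bar-Natan--Khovanov--Morrison--Walker--Wedrich functoriality theorem immediately upgrades this to the non-isotopy of $D_0$ and $D_1$ rel boundary.
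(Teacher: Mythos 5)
Your high-level strategy matches Sundberg--Swann's: reduce to showing $\kh(D)(1) \neq \pm\kh(D')(1)$ inside $\kh(K)$, extract these classes from movie presentations, and separate them using duality or an auxiliary cobordism. Your observation that the ``double-up'' composition $\kh(-D_0) \circ \kh(D_1)$ vanishes because $-D_0 \cup D_1$ is a genus-zero sphere is correct, and you are right that this forces the distinction to be made inside $\kh(K)$ rather than by a closed-surface pairing.

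However, the choice of example is a genuine gap, and it is not a bookkeeping detail --- it is the entire content of the theorem. You import the Juh\'asz--Zemke example ($K = T \# -T$, $D_1$ obtained from $D_0$ by the swallow-follow self-concordance $C$) on the grounds that link Floer homology distinguishes those disks, but you give no reason --- and there is none known --- that $\kh(C)$ fails to fix $\kh(D_0)(1)$ up to sign. The argument of \cite{juhasz-zemke:disks} is built on specifically Floer-theoretic machinery (sutured Floer homology, maps of sutured manifold cobordisms) with no direct Khovanov analogue; that Khovanov homology detects roll-spinning is an assertion, not a lemma, and if $\kh(C)$ acted as $\pm\id$ on the relevant class your plan would collapse at exactly the step you call ``the heart of the argument.'' Even setting that aside, the swallow-follow concordance has a long movie, so the by-hand chain computation you sketch would be far less tractable than you suggest. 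Sundberg--Swann instead use a small, verified example: the knot $9_{46}$ and two ribbon disks $D,D'$ arising from two band moves exchanged by an obvious diagram symmetry (Figure~\ref{fig:946}). Each disk is a single band plus a handful of Reidemeister I/II moves and caps, so the induced maps are directly computable; the subtle step is choosing a cycle $\phi$ in $\kh(-9_{46})$ that actually breaks the symmetry --- the naive oriented-resolution representative is symmetric and fails to distinguish the disks, as worked through in \S\ref{subsec:induced}. Without either such a tractable example or an independent nonvanishing criterion for $\kh(C)$, the proposal would stall at its middle step.
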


The original pair of core examples studied in \cite{sundberg-swann} are depicted in Figure~\ref{fig:946}. Their approach is direct and consists of explicitly calculating the images $\kh(D)(1),\kh(D')(1) \in \kh(9_{46})$ and distinguishing them using homomorphisms out of $\kh(9_{46})$.

\begin{figure}[b]
\center
\def\svgwidth{.825\linewidth}
\begingroup%
  \makeatletter%
  \providecommand\color[2][]{%
    \errmessage{(Inkscape) Color is used for the text in Inkscape, but the package 'color.sty' is not loaded}%
    \renewcommand\color[2][]{}%
  }%
  \providecommand\transparent[1]{%
    \errmessage{(Inkscape) Transparency is used (non-zero) for the text in Inkscape, but the package 'transparent.sty' is not loaded}%
    \renewcommand\transparent[1]{}%
  }%
  \providecommand\rotatebox[2]{#2}%
  \newcommand*\fsize{\dimexpr\f@size pt\relax}%
  \newcommand*\lineheight[1]{\fontsize{\fsize}{#1\fsize}\selectfont}%
  \ifx\svgwidth\undefined%
    \setlength{\unitlength}{720.09597898bp}%
    \ifx\svgscale\undefined%
      \relax%
    \else%
      \setlength{\unitlength}{\unitlength * \real{\svgscale}}%
    \fi%
  \else%
    \setlength{\unitlength}{\svgwidth}%
  \fi%
  \global\let\svgwidth\undefined%
  \global\let\svgscale\undefined%
  \makeatother%
  \begin{picture}(1,0.40510148)%
    \lineheight{1}%
    \setlength\tabcolsep{0pt}%
    \put(0,0){\includegraphics[width=\unitlength,page=1]{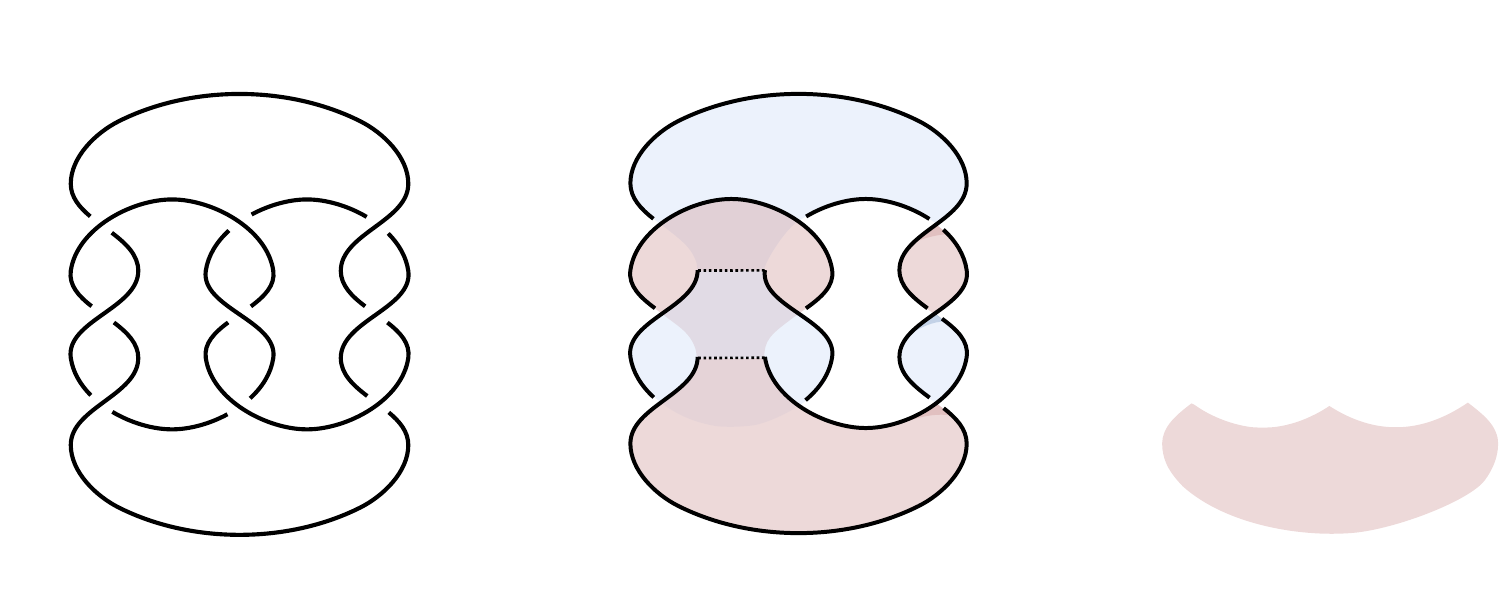}}%
    \put(0.15966056,0.00619768){\color[rgb]{0,0,0}\makebox(0,0)[t]{\smash{\begin{tabular}[t]{c}{\footnotesize $K=9_{46}$}\end{tabular}}}}%
    \put(0.53227691,0.00494999){\color[rgb]{0,0,0}\makebox(0,0)[t]{\smash{\begin{tabular}[t]{c}{\footnotesize$D$}\end{tabular}}}}%
    \put(0,0){\includegraphics[width=\unitlength,page=2]{946.pdf}}%
    \put(0.88639772,0.00494999){\color[rgb]{0,0,0}\makebox(0,0)[t]{\smash{\begin{tabular}[t]{c}{\footnotesize$D'$}\end{tabular}}}}%
  \end{picture}%
\endgroup%

\caption{The knot $9_{46}$, along with a pair of slice disks that it bounds.}\label{fig:946}
\end{figure}

We note that the pairs of surfaces distinguished in \cite{juhasz-zemke:disks} and \cite{sundberg-swann} could already be distinguished using more classical tools. For example, one can compare their \emph{peripheral maps} (cf.~\cite[Definition~3.9]{juhasz-zemke:disks}), the inclusion-induced map $\pi_1(S^3 \setminus \partial D) \to \pi_1(B^4 \setminus D)$. 

In contrast, as we now discuss, further work has shown that both link Floer homology and Khovanov homology can distinguish surfaces that are \emph{exotically knotted}, i.e., those that are topologically isotopic (rel boundary) but not smoothly isotopic.

\begin{theorem}[Juh\'asz--Miller--Zemke \cite{jmz:exotic}]
There are infinitely many knots in $S^3$ such that each bounds countably infinitely many properly embedded, smooth, orientable, genus-one surfaces in $B^4$ that are pairwise topologically isotopic (rel boundary) but there is no diffeomorphism of $B^4$ taking one to the other.
\end{theorem}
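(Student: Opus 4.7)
The plan is to adapt the roll-spin/swallow-follow strategy of Juh\'asz--Zemke \cite{juhasz-zemke:disks} from slice disks to genus-one surfaces, and then to boost ``not smoothly isotopic rel boundary'' up to the stronger ``not related by any diffeomorphism of $B^4$.'' First I would pick a judicious base knot $K \subset S^3$ bounding a smooth genus-one surface $\Sigma \subset B^4$; a natural source is a composite knot of the form $K = J \mathop{\#} -J$ (or a satellite with a suitable companion), since these admit a geometrically transparent swallow-follow concordance $C : K \to K$ obtained by spinning one summand around the other, as in Figure~\ref{fig:roll-spin}. Iterating gives a family $\Sigma_n = \Sigma \cup C^{n}$ of properly embedded genus-one surfaces in $B^4$, all with boundary $K$.

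Next I would need to verify two opposing facts about $\{\Sigma_n\}$. On the topological side, the concordance $C$ is topologically isotopic rel boundary to the product cylinder (swallow-follow is built from a loop in $S^3$ of homeomorphisms, which extends to a topological isotopy after applying surface-uniqueness results in the spirit of Freedman--Quinn and Conway--Powell). This requires controlling $\pi_1(B^4\setminus\Sigma_n)$ so that the relevant topological uniqueness theorems apply, which in turn constrains the choice of $K$ and $\Sigma$. On the smooth side, I would distinguish the $\Sigma_n$ using the functorial cobordism maps from Theorem~\ref{thm:hfl-functorial}: by functoriality $\hflm(\Sigma_n) = \hflm(C)^n \circ \hflm(\Sigma)$, so it suffices to show that the endomorphism $\hflm(C)$ of $\hflm(K)$ has infinite order on the class $\hflm(\Sigma)(1)$. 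This step would follow the roll-spun disk computations in \cite{juhasz-zemke:disks}, where swallow-follow acts nontrivially via a combination of the $U,V$-action and a grading/deck-type automorphism, so that the iterates $\hflm(\Sigma_n)(1)$ lie in infinitely many distinct Floer classes.

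The main obstacle is upgrading from ``not isotopic rel boundary'' to ``not related by any diffeomorphism of $B^4$.'' A diffeomorphism $\varphi \colon B^4 \to B^4$ with $\varphi(\Sigma_i) = \Sigma_j$ restricts to a self-diffeomorphism of the pair $(S^3, K)$, and by naturality of the Floer TQFT it induces an automorphism of $\hflm(K)$ through which $\hflm(\Sigma_i)$ and $\hflm(\Sigma_j)$ are identified. One must therefore compare the invariants not on the nose, but modulo the action of the (typically finite) symmetry group of $K$. The hard part of the proof is precisely a naturality/equivariance calculation showing that this symmetry group acts on the set $\{\hflm(\Sigma_n)(1)\}_{n \in \mathbb{Z}}$ with finite orbits (ideally trivially), so that the infinite family in $\hflm$ survives the quotient. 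Choosing $K$ with restrictive symmetries (for example, a prime summand $J$ with trivial symmetry group, so that the only symmetry of $J \# -J$ is the obvious swap) would make this manageable.

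Finally, to extract infinitely many knots, I would vary the companion $J$ (or the base surface $\Sigma$) across an infinite family of prime, sufficiently asymmetric knots. Distinct choices of $J$ give non-isotopic $K = J \mathop{\#} -J$, and the Floer calculation can be localized near the companion summand to ensure the construction continues to work independently for each $K$, yielding the desired collection of infinitely many knots, each with its own countable family of exotically knotted genus-one surfaces.
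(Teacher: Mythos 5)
Your proposal takes a genuinely different route from Juh\'asz--Miller--Zemke: you iterate a swallow-follow (roll-spun) concordance in the style of \cite{juhasz-zemke:disks}, whereas JMZ produce their families by \emph{1-twisted rim surgery} on a fixed genus-one surface and compute its effect on a perturbed sutured link Floer homology. The difference is not cosmetic, because the topological-isotopy half of the theorem is exactly where the two constructions diverge. The surfaces produced by swallow-follow are \emph{not} in general topologically isotopic rel boundary: as the survey points out, the roll-spun disks of \cite{juhasz-zemke:disks} are already distinguishable by their peripheral maps $\pi_1(S^3\setminus\partial D)\to\pi_1(B^4\setminus D)$, which is a homeomorphism-rel-boundary invariant. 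So the claim in your second paragraph that ``the concordance $C$ is topologically isotopic rel boundary to the product cylinder'' is false in general, and the appeal to Conway--Powell does not apply here: that result requires $\pi_1(B^4\setminus D)\cong\zz$, whereas for $K=J\#-J$ with a genus-one surface $\Sigma$ (and especially for the iterates $\Sigma_n$), there is no reason for the complement to have abelian, let alone cyclic, fundamental group. Iterating swallow-follow will generically change the peripheral data, so the family $\{\Sigma_n\}$ you build lands outside the class of examples the theorem is about.

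The mechanism JMZ actually use to force topological isotopy is the combination of (i) performing rim surgery with one full twist, which by results in the spirit of Kim--Ruberman keeps the topological isotopy type fixed when the companion knot has suitable Alexander polynomial, and (ii) arranging the rim-surgery curve $\gamma\subset\Sigma$ to bound a \emph{topologically} (but not smoothly) embedded disk $\Delta$ in $B^4\setminus\Sigma$, so that the modification is topologically unknotted but can still change the smooth type. Infinitely many surfaces per knot come from varying the pattern knot in the rim surgery (and detecting the change on Floer homology), not from iterating a single concordance. Your handling of the ``no diffeomorphism of $B^4$'' upgrade (compare the Floer invariants modulo the symmetry group of $(S^3,K)$, and choose $K$ with small symmetry group) is a reasonable sketch of the right idea and is broadly in the spirit of what one must do; but without first repairing the topological-isotopy step the construction does not produce examples satisfying the hypotheses of the theorem. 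To fix the proposal you would need to replace the swallow-follow concordance with an operation that demonstrably preserves the topological isotopy class, and twisted rim surgery along a curve bounding a topological disk is precisely such an operation.
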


\begin{figure}[b]
\sidecaption
\def\svgwidth{.45\linewidth}
\begingroup%
  \makeatletter%
  \providecommand\color[2][]{%
    \errmessage{(Inkscape) Color is used for the text in Inkscape, but the package 'color.sty' is not loaded}%
    \renewcommand\color[2][]{}%
  }%
  \providecommand\transparent[1]{%
    \errmessage{(Inkscape) Transparency is used (non-zero) for the text in Inkscape, but the package 'transparent.sty' is not loaded}%
    \renewcommand\transparent[1]{}%
  }%
  \providecommand\rotatebox[2]{#2}%
  \newcommand*\fsize{\dimexpr\f@size pt\relax}%
  \newcommand*\lineheight[1]{\fontsize{\fsize}{#1\fsize}\selectfont}%
  \ifx\svgwidth\undefined%
    \setlength{\unitlength}{632.64757532bp}%
    \ifx\svgscale\undefined%
      \relax%
    \else%
      \setlength{\unitlength}{\unitlength * \real{\svgscale}}%
    \fi%
  \else%
    \setlength{\unitlength}{\svgwidth}%
  \fi%
  \global\let\svgwidth\undefined%
  \global\let\svgscale\undefined%
  \makeatother%
  \begin{picture}(1,0.53330883)%
    \lineheight{1}%
    \setlength\tabcolsep{0pt}%
    \put(0,0){\includegraphics[width=\unitlength,page=1]{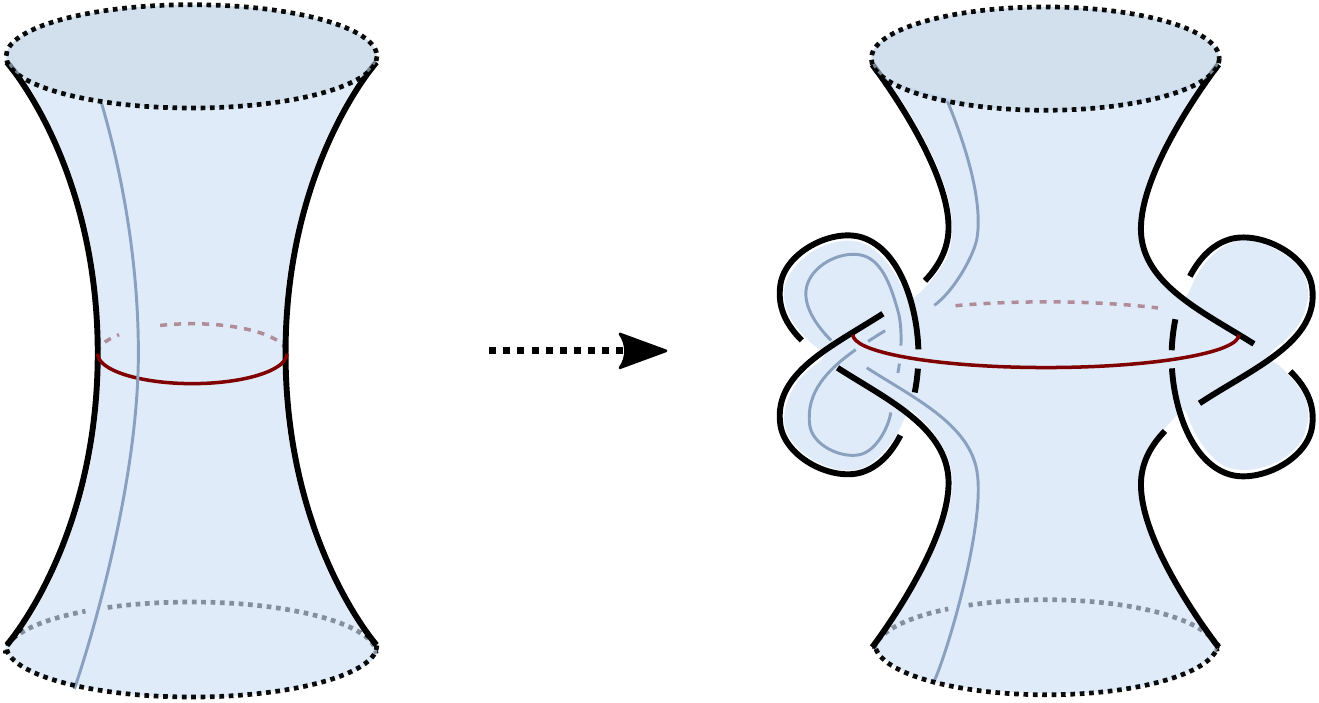}}%
    \put(0.03114912,0.25253727){\color[rgb]{0.50196078,0,0}\makebox(0,0)[t]{\smash{\begin{tabular}[t]{c}$\gamma$\end{tabular}}}}%
  \end{picture}%
\endgroup%

\caption{A schematic depiction of rim surgery along a 
curve $\gamma$ in an embedded surface.}\label{fig:rim-surgery}
\end{figure}

To prove this result, Juh\'asz--Miller--Zemke compute the effect of \emph{twisted rim surgery}  on a perturbed version of sutured link Floer homology. Rim surgery was introduced by Fintushel and Stern \cite{fs:surfaces}. Informally, it consists of choosing a simple closed curve $\gamma$ in an embedded surface $\Sigma$, fixing a tubular neighborhood $S^1 \times B^3$ of $\gamma$  that meets $\Sigma$ in $S^1 \times I$ (where $I$ is an unknotted arc between the north and south poles in $B^3$), and modifying $\Sigma$ within $S^1 \times B^3$ by tying a knot $K$ into each arc $\theta \times I $. This is depicted schematically in Figure~\ref{fig:rim-surgery}. The \emph{twisted} version of rim surgery includes a rotation of the knotted arc as one sweeps through the $S^1$ factor, as depicted in Figure~\ref{fig:twist}. 
The use of  \emph{1-twisted} rim surgery in \cite{jmz:exotic} is one of two key ingredients for ensuring that the modified surface $\Sigma'$ remains topologically isotopic to $\Sigma$; the second ingredient is that the curve $\gamma \subset \Sigma$ is arranged to bound a \emph{topologically} embedded disk $\Delta$ in the complement of $\Sigma$. (If $\Delta$ was smooth, then $\Sigma'$ would be smoothly isotopic to $\Sigma$.)

%

\begin{figure}
\center
\includegraphics[width=.9\linewidth]{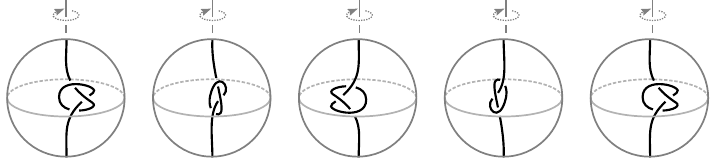}
\caption{Rotation of a knotted arc in $B^3$ as applied in twisted rim surgery.}\label{fig:twist}
\end{figure}


Different flavors of exotic surfaces were detected with Khovanov homology:

\begin{theorem}[Hayden--Sundberg \cite{hayden-sundberg}]
For all $g\geq 0$, there are infinitely many knots in $S^3$ that each bound a pair of smooth, orientable, genus-$g$ surfaces $\Sigma,\Sigma' \subset B^4$ that are exotically knotted and induce distinct maps $\kh(\Sigma)\neq \pm \kh(\Sigma')$.
\end{theorem}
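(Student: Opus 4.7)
The plan is to build from a single low-genus example in three stages: first establish a seed pair of exotic slice disks, then produce infinitely many knots by boundary connect-sum, and finally raise to arbitrary genus by internal stabilization. For the seed, I would start from the pair of slice disks $D, D' \subset B^4$ bounded by $9_{46}$ constructed by Sundberg--Swann, for which $\kh(D) \neq \pm \kh(D')$ by direct computation of the Khovanov cobordism maps. To upgrade ``smoothly non-isotopic'' to ``exotically knotted,'' I would invoke a topological classification of slice disks---for instance Conway--Powell's topological isotopy theorem, or a direct application of Freedman's disk embedding theorem exploiting the small fundamental groups of the disk complements here---to conclude that $D$ and $D'$ are topologically isotopic rel boundary. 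This handles $g=0$ for a single knot.

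To produce infinitely many knots, I would take an infinite sequence of pairwise distinct smoothly slice knots $\{J_n\}$ with slice disks $E_n \subset B^4$, and form the boundary connect sums $\Sigma_n = D \natural E_n$ and $\Sigma_n' = D' \natural E_n$. Each bounds $K_n = 9_{46} \# J_n$; the $K_n$ are pairwise distinct (e.g.\ by arranging distinct Alexander polynomials), and topological isotopy rel boundary is preserved under boundary connect sum. For the Khovanov cobordism maps, boundary connect sum behaves tensorially: there is a natural identification $\kh(K \# J) \cong \kh(K) \otimes_\acal \kh(J)$ under which $\kh(\Sigma \natural E)(1) = \kh(\Sigma)(1) \otimes_\acal \kh(E)(1)$. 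Combining $\kh(D)(1) \neq \pm \kh(D')(1)$ with non-vanishing of $\kh(E_n)(1)$ (a general property of any slice disk) yields $\kh(\Sigma_n) \neq \pm \kh(\Sigma_n')$.

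Finally, for arbitrary genus $g$, I would internally stabilize each $\Sigma_n$ and $\Sigma_n'$ by attaching $g$ trivial tubes supported in small mutually disjoint regions. Topological isotopy rel boundary is preserved under these local modifications. The main obstacle---and the technical heart of the argument---is controlling the effect on the Khovanov cobordism maps: each internal stabilization acts locally on $\acal$ as $\mu \circ \Delta$, sending $\mathbf{1} \mapsto 2\mathbf{x}$ and $\mathbf{x} \mapsto 0$, so iterating at a single site yields $(\mu \circ \Delta)^2 \equiv 0$ and collapses the obstruction. To circumvent this, I would arrange the $g$ stabilizations so that, in a distinguishing cross-sectional resolution of $\Sigma_n$ and $\Sigma_n'$, they occur on distinct components, hence act through independent $\acal$-factors rather than iterated application to one. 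Alternatively, one may establish the obstruction in Bar-Natan homology $\bn$---where closed odd-genus surfaces act by nonzero scalars per Example~\ref{ex:closed-kh}, so iterated stabilization remains nontrivial---and then transfer distinctness back to $\kh$ via the structural relationship between the two theories.
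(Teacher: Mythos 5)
The choice of seed is the fatal error. You start from the Sundberg--Swann slice disks $D,D'$ bounded by $9_{46}$ and then propose to show they are topologically isotopic rel boundary via Conway--Powell (or Freedman directly). This cannot work: the paper explicitly notes that the Sundberg--Swann pair ``could already be distinguished using more classical tools. For example, one can compare their peripheral maps,'' i.e.\ the inclusion-induced maps $\pi_1(S^3\setminus\partial D)\to\pi_1(B^4\setminus D)$. The peripheral map is a topological isotopy invariant, so if it distinguishes $D$ from $D'$, they are \emph{not} topologically isotopic rel boundary, and hence not exotically knotted. Relatedly, Conway--Powell requires $\pi_1(B^4\setminus D)\cong\zz$; since $9_{46}$ has nontrivial Alexander polynomial, no slice disk it bounds can have infinite-cyclic complement, so the hypothesis fails from the start. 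The actual seed in \cite{hayden-sundberg} is the pair of positron disks from Figure~\ref{fig:positron-disks}, bounded by a knot with $\Delta\equiv1$; those complements do have $\pi_1\cong\zz$, Conway--Powell applies, and the distinction is achieved not by computing $\kh(D)(1)$ and $\kh(D')(1)$ in a complicated group (as Sundberg--Swann did) but by viewing the disks as cobordisms $K\to\emptyset$ and exhibiting a single class $\phi\in\kh(K)$ with $\kh(D)(\phi)\neq\pm\kh(D')(\phi)$. Your proposal swaps in the one example in this circle of ideas that is \emph{not} exotic.

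The genus-raising step also does not close. You correctly identify that internal stabilization locally acts by $\bfo\mapsto2\bfx$, $\bfx\mapsto0$, so that repeating it at one site kills the map, but the proposed fix --- ``arrange the $g$ stabilizations so that, in a distinguishing cross-sectional resolution, they occur on distinct components'' --- is not something you can simply arrange: where a stabilization lands in a given resolution is determined by the geometry of the surface and its movie, and there is no argument given that a slice disk for a knot admits $g$ disjoint stabilization sites landing on $g$ distinct smoothing circles in the relevant cross-section for every $g$. Moreover, since Proposition~\ref{prop:stab} shows the Bar-Natan cobordism map changes by a global factor of $H$ per stabilization independent of where the tube is placed, the $\zz$-coefficient Khovanov map's behavior is likewise not expected to depend on the placement in the way you need. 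The fallback you suggest --- ``establish the obstruction in $\bn$ and transfer distinctness back to $\kh$'' --- runs the wrong direction: the quotient map $\pi_*:\bn\to\kh$ (setting $H=0$) can only push $\bn$-level distinctions down to $\kh$ when the $\bn$-difference has a nonzero image under $\pi_*$, which is precisely what stabilization destroys. A working higher-genus construction must add genus in a way that does not pass through a multiplication-by-$2\bfx$ (e.g.\ by modifying the examples themselves, as in \cite{hayden-sundberg}), rather than by naive internal stabilization.

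Your middle step (boundary connect-summing with slice disks $E_n$ to obtain infinitely many boundary knots) is plausible in outline, but you would need $\pi_1(B^4\setminus E_n)\cong\zz$ for each $n$ in order for Conway--Powell to continue to apply to the connected sums, and you should say so --- choosing arbitrary slice knots $J_n$ will not do.
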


In contrast with \cite{sundberg-swann}, the strategy in \cite{hayden-sundberg} is to view the surfaces as cobordisms $K \to \emptyset$ and find an explicit element in $\kh(K)$ that behaves differently under the maps $$\kh(\Sigma),\kh(\Sigma') : \kh(K)\to \zz.$$ The core examples treated in \cite{hayden-sundberg} are the positron disks from \cite{hayden:corks},  depicted in Figure~\ref{fig:positron-disks}. These disks are only exotically knotted \emph{rel boundary}, but can be upgraded to produce \emph{absolutely} exotic disks and higher-genus surfaces as in \cite{hayden-sundberg}. The existence of a topological isotopy (rel boundary) is obtained via the following:

\begin{figure}
\center
\def\svgwidth{.85\linewidth}
\begingroup%
  \makeatletter%
  \providecommand\color[2][]{%
    \errmessage{(Inkscape) Color is used for the text in Inkscape, but the package 'color.sty' is not loaded}%
    \renewcommand\color[2][]{}%
  }%
  \providecommand\transparent[1]{%
    \errmessage{(Inkscape) Transparency is used (non-zero) for the text in Inkscape, but the package 'transparent.sty' is not loaded}%
    \renewcommand\transparent[1]{}%
  }%
  \providecommand\rotatebox[2]{#2}%
  \newcommand*\fsize{\dimexpr\f@size pt\relax}%
  \newcommand*\lineheight[1]{\fontsize{\fsize}{#1\fsize}\selectfont}%
  \ifx\svgwidth\undefined%
    \setlength{\unitlength}{756.48758535bp}%
    \ifx\svgscale\undefined%
      \relax%
    \else%
      \setlength{\unitlength}{\unitlength * \real{\svgscale}}%
    \fi%
  \else%
    \setlength{\unitlength}{\svgwidth}%
  \fi%
  \global\let\svgwidth\undefined%
  \global\let\svgscale\undefined%
  \makeatother%
  \begin{picture}(1,0.32716057)%
    \lineheight{1}%
    \setlength\tabcolsep{0pt}%
    \put(0,0){\includegraphics[width=\unitlength,page=1]{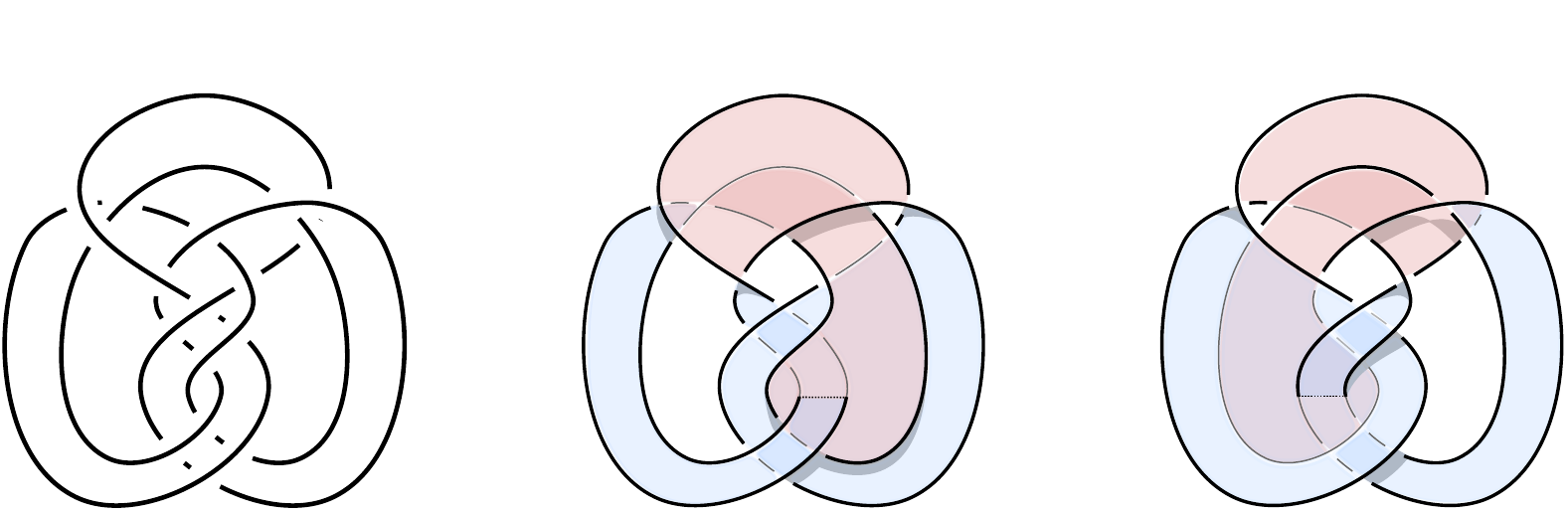}}%
    \put(0.18607569,0.2959062){\color[rgb]{0.4,0.4,0.4}\makebox(0,0)[t]{\smash{\begin{tabular}[t]{c}{\footnotesize $\tau$}\end{tabular}}}}%
    \put(0,0){\includegraphics[width=\unitlength,page=2]{positron-disks.pdf}}%
  \end{picture}%
\endgroup%

\caption{The positron knot and a pair of exotically knotted slice disks that it bounds.}\label{fig:positron-disks}
\end{figure}

\begin{theorem}[Conway--Powell \cite{conway-powell}]
Any smooth, properly embedded disks  $D$ and $D'$ in $B^4$ with the same boundary and whose complements have $\pi_1 \cong \zz$ 
are topologically isotopic rel boundary.
%
%
\end{theorem}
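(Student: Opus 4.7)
The strategy is to leverage Freedman's topological surgery machinery, which applies because $\zz$ is a \emph{good} group. Set $X = B^4 \setminus \nu(D)$ and $X' = B^4 \setminus \nu(D')$; these are compact topological 4-manifolds sharing a common boundary $Y$ (built from $S^3$ by removing a neighborhood of $K = \partial D = \partial D'$ and regluing). By Alexander duality in $B^4$, each exterior has the homology of $S^1$, and together with the hypothesis $\pi_1(X) \cong \pi_1(X') \cong \zz$ this forces both $X$ and $X'$ to be homotopy equivalent to $S^1 = K(\zz,1)$; in particular $\pi_2(X) = \pi_2(X') = 0$.

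The first main step is to produce a homeomorphism $h : X \to X'$ that is the identity on $Y$. This is where Freedman's classification enters: compact topological 4-manifolds with good fundamental group and a fixed boundary are determined up to rel-boundary homeomorphism by their equivariant intersection form on $\pi_2$ together with the Kirby-Siebenmann invariant. In our setting the intersection form is trivial since $\pi_2 = 0$, and the Kirby-Siebenmann invariants vanish since $X$ and $X'$ smoothly embed in $B^4$. The classification then yields the desired $h$.

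The second step is to extend $h$ to a homeomorphism $\hat h : B^4 \to B^4$ that is the identity on $\partial B^4$ and satisfies $\hat h(D) = D'$. For this, identify the tubular neighborhoods $\nu(D) \cong D^2 \times D^2 \cong \nu(D')$ using framings compatible with $h$ on the boundary tori $\partial \nu(D)$ and $\partial \nu(D')$; the relevant framing obstruction vanishes because the exteriors have the homotopy type of $S^1$. Then apply the topological Alexander trick to produce an isotopy $F_t : B^4 \to B^4$ from the identity $F_0$ to $\hat h = F_1$, with each $F_t$ fixing $\partial B^4$ pointwise. The family $D_t = F_t(D)$ is then a topological isotopy from $D$ to $D'$ rel boundary, as required.

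The main obstacle is clearly the first step: the existence of $h$ rests on the full depth of Freedman's disk embedding theorem and the topological $s$-cobordism theorem for the group $\zz$ (where conveniently $\operatorname{Wh}(\zz) = 0$, so the $s$- and $h$-cobordism theorems coincide). A more minor subtlety is verifying the framing compatibility when extending $h$ across the normal bundles, which reduces to a cohomology computation governed by the homotopy type $S^1$ of the exteriors. Once these ingredients are in place, the rest of the argument is formal and dimension-independent.
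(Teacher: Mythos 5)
Your overall strategy --- compare exteriors, build a homeomorphism rel boundary, extend across tubular neighborhoods, apply the Alexander trick --- matches the framework of the Conway--Powell argument, and you correctly flag the disk embedding theorem, the topological $s$-cobordism theorem, and $\operatorname{Wh}(\zz)=0$ as the deep inputs. (These notes only cite the result; the proof is in \cite{conway-powell}.) But two steps that you treat as routine are actually where the substance lives, and as written they have gaps.

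The implication ``$H_*(X;\zz)\cong H_*(S^1)$ and $\pi_1(X)\cong\zz$ force $X\simeq S^1$'' is false in general: a space can have $\pi_1=\zz$ and the integral homology of a circle while having $\pi_2\neq 0$ (for instance $\pi_2\cong\zz[t,t^{-1}]/(t-2)$, whose coinvariants vanish and hence contribute nothing to $H_2(X;\zz)$). Proving $\pi_2(X)=0$ for a $\zz$-disc exterior genuinely uses the 4-manifold structure: working over $\Lambda=\zz[t,t^{-1}]$, one shows $H_1(X,\partial X;\Lambda)=0$ from the long exact sequence of the pair, uses Poincar\'e--Lefschetz duality and the universal coefficient theorem to conclude $H_2(X;\Lambda)$ is a free $\Lambda$-module, and finally kills it via the Wang sequence since $H_2(X;\zz)=0$. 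Likewise, the ``classification of compact topological 4-manifolds with good fundamental group and a fixed boundary by intersection form and Kirby--Siebenmann'' you invoke is not an existing theorem; even in the simply connected bounded case (Boyer) the statement is more delicate, and for $\pi_1=\zz$ there is no such off-the-shelf result. What one actually needs is (i) a homotopy equivalence $X\to X'$ restricting to the identity on $\partial X$, built by obstruction theory using asphericity and $H^2(X,\partial X;\zz)\cong H_2(X;\zz)=0$, and (ii) a surgery-theoretic argument that the topological structure set of $(X,\partial X)$ rel boundary is a single point, via the surgery exact sequence over $\zz[\zz]$ (with $L_*(\zz[\zz])$ computed by Shaneson splitting) together with the $s$-cobordism theorem for the good group $\zz$. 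Your phrase ``the classification then yields $h$'' stands in for precisely this analysis, which is the heart of the proof; the downstream steps you describe (extending over $\nu(D)$ using the canonical framing, then the Alexander trick) are fine.
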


One of the first successful applications of Khovanov homology to an open question about knotted surfaces concerned a question of Livingston \cite{livingston}, who asked whether any two Seifert surfaces of equal genus for a knot $K$ are isotopic through properly embedded surfaces in $B^4$.

\begin{theorem}[Hayden--Kim--Miller--Park--Sundberg \cite{hkmps:seifert}]
There exist Seifert surfaces in $S^3$ with the same boundary and genus that are not smoothly isotopic through properly embedded surfaces in $B^4$.
\end{theorem}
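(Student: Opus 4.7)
The plan is to fix a knot $K \subset S^3$ bounding two specific Seifert surfaces $F_0, F_1 \subset S^3$ with $g(F_0) = g(F_1)$, push their interiors slightly into $\mathrm{int}(B^4)$ rel boundary to obtain properly embedded surfaces $\Sigma_0, \Sigma_1 \subset B^4$ with $\partial\Sigma_i = K$, and then use the Khovanov cobordism maps to obstruct the existence of a smooth isotopy between them in $B^4$.

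Viewing each $\Sigma_i$ as a link cobordism from $K$ to $\emptyset$ (using the reduction in Remark~(b) after the Khovanov functoriality theorem), we obtain maps
$$\kh(\Sigma_i): \kh(K) \longrightarrow \kh(\emptyset) \cong \rcal$$
of bidegree $(0, \chi(\Sigma_i)) = (0, 1-2g)$, so in particular in the same bidegree. By the invariance half of that theorem, if $\Sigma_0$ and $\Sigma_1$ were smoothly isotopic rel boundary in $B^4$, then these induced maps would agree up to sign. It therefore suffices to exhibit explicit $K, F_0, F_1$ and a class $\class \in \kh(K)$ for which
$$\kh(\Sigma_0)(\class) \neq \pm \kh(\Sigma_1)(\class)$$
in $\rcal$.

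To locate such examples, I would look for Seifert surface pairs related by a local operation that preserves the boundary and genus but is not obviously realized by a smooth ambient isotopy in $B^4$ --- for instance, surfaces arising from distinct ribbon presentations of $K$, or surfaces related by a tube modification (in the spirit of rim surgery) along a curve on the surface whose complementary disk in $B^4$ is only topologically embedded. Following the general playbook of \cite{hayden-sundberg}, once candidates are fixed I would present each $\Sigma_i$ as a movie of elementary cobordisms (births, merging/splitting saddles, Reidemeister moves, and deaths), choose a distinguished cycle representative of $\class$ (a natural first guess is a Lee/Bar-Natan canonical generator or a class singled out by a symmetry of $K$), and compute its image under each map using the Frobenius algebra structure of Khovanov homology, working in the Bar-Natan category of \S\ref{sec:tqft} to simplify cobordisms before evaluating.

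The main obstacle is twofold. First, there is no general principle ensuring that Khovanov homology distinguishes equal-genus Seifert surfaces, so the bulk of the creative work is in locating examples: the genus-zero disk case was already handled in \cite{sundberg-swann} and \cite{hayden-sundberg}, and passing to positive genus both expands the search space and eliminates the simplifying rigidity of low genus. Second, once candidates are fixed, the chain-level Khovanov computation is delicate: one must simplify each cobordism movie enough to be tractable while preserving its isotopy class, identify a nontrivial class in $\kh(K)$ whose image is computable on both sides, and verify that the two outputs in $\rcal$ are genuinely inequivalent (not merely living in different bidegrees --- which they cannot, since the bidegrees coincide). It is in this last verification that the Bar-Natan perspective and its diagrammatic calculus developed in \S\ref{sec:tqft} become essential.
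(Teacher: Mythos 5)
Your high-level strategy (push the Seifert surfaces into $B^4$, view them as cobordisms $K \to \emptyset$, and compare the induced Khovanov cobordism maps) is exactly the paper's strategy, and your bidegree observation is a correct reduction. But you stop at the hard part, which is where the argument actually lives, and some of your guesses about how to proceed point away from what the paper does.

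The paper's examples (following \cite{hkmps:seifert}) are two genus-2 Seifert surfaces for $K = \wh(3_1)$, the positive Whitehead double of the right-handed trefoil. The first, $\Sigma$, is obtained by applying the Whitehead doubling operation to the genus-1 Seifert surface of $3_1$ itself; the second, $\Sigma'$, is obtained by internally stabilizing (adding a tube to) the standard genus-1 Seifert surface of $\wh(3_1)$. The key structural point, which your proposal does not identify, is that one does not need to locate a cleverly chosen class $\phi$ on which the two maps differ: working over $\zz_2$, one shows the stronger dichotomy $\kh(\Sigma) \neq 0$ and $\kh(\Sigma') = 0$. The vanishing of $\kh(\Sigma')$ is almost free: over $\zz_2$, an internal stabilization multiplies the cobordism map by $2\bfx = 0$ (this is Exercise~\ref{exer:stab} / Example~\ref{ex:closed-kh}), so the stabilized surface automatically induces the zero map regardless of what the underlying genus-1 surface did. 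The nonvanishing of $\kh(\Sigma)$ comes from a quasipositivity argument: the $2$-copy of a positive braid is quasipositive, so the disconnected double of the trefoil's fiber surface carries Plamenevskaya's class to something nonzero, and this survives the Whitehead clasp (Exercise~\ref{exer:livingston} walks through the orientation bookkeeping). Working over $\zz_2$ also dissolves the $\pm$-sign ambiguity you flag, so no Bar-Natan refinement is needed for this theorem.

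Two smaller misdirections in your proposal. First, your suggestion to look for a "tube modification ... along a curve whose complementary disk in $B^4$ is only topologically embedded" imports the machinery from the exotic surface theorems (\cite{jmz:exotic}, \cite{hayden-sundberg}); the Seifert surface theorem as stated does not require the surfaces to be topologically isotopic, and the examples here are distinguished by a much more elementary vanishing/nonvanishing argument rather than by a delicate invariant of exotic pairs. (The remark that these particular surfaces \emph{are} topologically isotopic is a bonus from Conway--Powell, not an input.) Second, your framing that "the genus-zero disk case was already handled in \cite{sundberg-swann}" conflates slice disks in $B^4$ with Seifert surfaces in $S^3$; Livingston's question is specifically about the latter, and there is no genus-zero Seifert surface case for a nontrivial knot, so the theorem is not a genus-upgrade of the disk results.

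So: the skeleton of your approach is right, but the proposal is missing the central idea that makes the example computable --- stabilization kills the map mod 2 while quasipositivity protects it --- and without that you do not have a proof, only a search strategy.
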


One of the simplest examples from \cite{hkmps:seifert} is a pair of Seifert surfaces for $\wh(3_1)$, the Whitehead double of the right-handed trefoil $3_1$, depicted in Figure~\ref{fig:wh(tref)}: part (a)  shows the standard genus-1 Seifert surface for $3_1$; (b) shows a genus-2 Seifert surface $\Sigma$ for $\wh(3_1)$ obtained by applying the Whitehead doubling operation to the original Seifert surface itself; (c) shows an alternative genus-2 Seifert surface $\Sigma'$ obtained by adding a tube to the standard genus-1 Seifert surface for $\wh(3_1)$. The authors show that, over $\zz_2$ coefficients, the associated cobordism maps satisfy  $\kh(\Sigma)\neq0$ but $\kh(\Sigma')=0$. See Exercise~\ref{exer:livingston} for more.

\begin{figure}
\center
\def\svgwidth{.9\linewidth}
\begingroup%
  \makeatletter%
  \providecommand\color[2][]{%
    \errmessage{(Inkscape) Color is used for the text in Inkscape, but the package 'color.sty' is not loaded}%
    \renewcommand\color[2][]{}%
  }%
  \providecommand\transparent[1]{%
    \errmessage{(Inkscape) Transparency is used (non-zero) for the text in Inkscape, but the package 'transparent.sty' is not loaded}%
    \renewcommand\transparent[1]{}%
  }%
  \providecommand\rotatebox[2]{#2}%
  \newcommand*\fsize{\dimexpr\f@size pt\relax}%
  \newcommand*\lineheight[1]{\fontsize{\fsize}{#1\fsize}\selectfont}%
  \ifx\svgwidth\undefined%
    \setlength{\unitlength}{439.8152413bp}%
    \ifx\svgscale\undefined%
      \relax%
    \else%
      \setlength{\unitlength}{\unitlength * \real{\svgscale}}%
    \fi%
  \else%
    \setlength{\unitlength}{\svgwidth}%
  \fi%
  \global\let\svgwidth\undefined%
  \global\let\svgscale\undefined%
  \makeatother%
  \begin{picture}(1,0.32072388)%
    \lineheight{1}%
    \setlength\tabcolsep{0pt}%
    \put(0,0){\includegraphics[width=\unitlength,page=1]{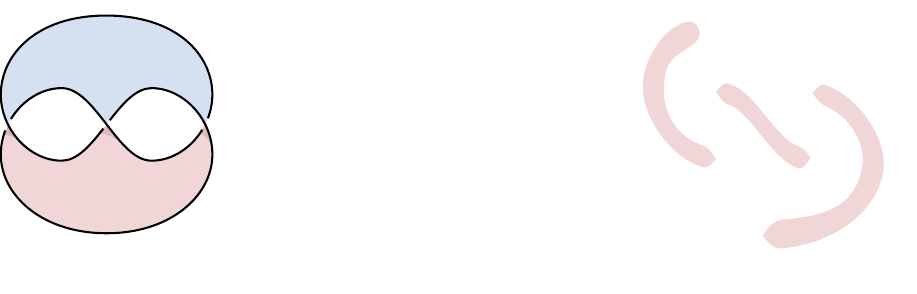}}%
    \put(0.1163239,0.00358126){\color[rgb]{0,0,0}\makebox(0,0)[t]{\smash{\begin{tabular}[t]{c}(a)\end{tabular}}}}%
    \put(0,0){\includegraphics[width=\unitlength,page=2]{wh_tref_-surfaces.pdf}}%
    \put(0.4664411,0.00358126){\color[rgb]{0,0,0}\makebox(0,0)[t]{\smash{\begin{tabular}[t]{c}(b)\end{tabular}}}}%
    \put(0,0){\includegraphics[width=\unitlength,page=3]{wh_tref_-surfaces.pdf}}%
    \put(0.83245122,0.00358126){\color[rgb]{0,0,0}\makebox(0,0)[t]{\smash{\begin{tabular}[t]{c}(c)\end{tabular}}}}%
    \put(0,0){\includegraphics[width=\unitlength,page=4]{wh_tref_-surfaces.pdf}}%
  \end{picture}%
\endgroup%

\caption{(a) The standard Seifert surface for the right-handed trefoil $3_1$. (b-c) Seifert surfaces for the positive Whitehead double $\wh(3_1)$.}\label{fig:wh(tref)}
\end{figure}

The operation of adding a tube as above is called \emph{internal stabilization}. More precisely, given a smoothly embedded, connected, orientable surface $\Sigma$ in $B^4$, this consists of choosing an embedded 3-dimensional 1-handle $h \approx [-1,1] \times D^2$ in $B^4$ that intersects $\Sigma$ only along $\{\pm 1\} \times D^2$, then removing $\{\pm 1\} \times \mathring{D}^2$ from $\Sigma$ and gluing in $[-1,1] \times \partial D^2$ to yield a new surface of larger genus. (Here we will only consider internal stabilization that preserves orientability.) 

A result of Baykur and Sunukjian \cite{baykur-sunukjian} (cf.~\cite[Appendix A]{conway-powell:cyclic}) shows that any two smoothly embedded surfaces in the same homology class in a 4-manifold become smoothly isotopic after each has been internally stabilized a sufficient number of times. Internal stabilization has the effect of killing the cobordism maps on $\kh$ and $\widehat\hfl$ with $\zz_2$ coefficients, making this a difficult phenomenon to probe. On the other hand, for the refined theories $\bn$ over $\ff_2[U]$ and $\hflm$ over $\ff_2[U,V]$, stabilization has the effect of multiplying the original cobordism map by appropriate powers the extra variables (as one might expect, given Examples~\ref{ex:closed-kh}-\ref{ex:closed-hf}). Juh\'asz and Zemke \cite{juhasz-zemke:stabilization} used this to show that the torsion order of elements of $\hfl^-$ (with respect to multiplication by $U$ or $V$) can provide a lower bound on the \emph{stabilization distance} between surfaces, i.e., the number of stabilizations needed to make two surfaces isotopic. Refining this, Guth detected the first examples of exotically knotted surfaces that remain distinct after internal stabilization.

\begin{theorem}[Guth \cite{guth}]
For any $n \geq 0$, there exists a knot in $S^3$ that bounds a pair of slice disks in $B^4$ that are exotically knotted (rel boundary) and whose stabilization distance is at least $n$.
\end{theorem}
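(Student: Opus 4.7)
The approach is to import the Juh\'asz--Zemke framework \cite{juhasz-zemke:stabilization} from link Floer homology into Khovanov homology, using the refined Bar-Natan cobordism maps over $\mathcal{R}=\ff_2[U]$. As noted in the passage preceding the theorem statement (and consistent with Example~\ref{ex:closed-kh}), an internal stabilization of a cobordism $\Sigma$ multiplies the cobordism map $\bn(\Sigma)$ by an appropriate power of $U$, up to a unit. Consequently, if slice disks $D,D' \subset B^4$ bounded by the same knot $K$ become smoothly isotopic rel boundary after $k$ internal stabilizations each, then $U^k$ annihilates the difference $\bn(D)-\bn(D')$, and more generally $U^k$ annihilates $\bn(D)(\alpha)-\bn(D')(\alpha)$ for any $\alpha \in \bn(K)$. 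This translates a lower bound on the $U$-torsion order of such a difference directly into a lower bound on the stabilization distance between $D$ and $D'$.

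With this principle in hand, the plan is to start from a base exotic pair $(D,D')$ of slice disks distinguished by $\bn$ --- for example, a pair from \cite{hayden-sundberg} or the positron disks of Figure~\ref{fig:positron-disks} --- and, for each $n \geq 0$, construct a knot $K_n$ together with topologically-isotopic-rel-boundary slice disks $(D_n,D_n')$ by an iterated geometric operation on the base. Natural candidates include repeated boundary connect-sum with a fixed auxiliary tangle, or an iterated satellite construction along a curve whose complement has cyclic fundamental group, so that Conway--Powell continues to deliver the requisite topological isotopy. Using multiplicativity of Bar-Natan homology under the chosen operation, the $U$-torsion order of $\bn(D_n)(\alpha_n)-\bn(D_n')(\alpha_n)$ for an appropriately chosen $\alpha_n \in \bn(K_n)$ should grow at least linearly in $n$; combined with the stabilization bound above, this forces the stabilization distance of $(D_n,D_n')$ to exceed any prescribed constant for $n$ sufficiently large.

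The main obstacle will be verifying that the torsion order truly amplifies rather than collapses under the family operation. Bar-Natan homology exhibits strong vanishing phenomena --- closed surfaces of even genus vanish, and iterated applications of natural operations can easily push a distinguishing class into a $U$-divisible or otherwise inaccessible submodule. Overcoming this likely requires an explicit chain-level model of $\bn$ for the family (for instance, cone-complex presentations for cables or satellites in the Bar-Natan category of \S\ref{sec:tqft}), together with a bigrading or filtration argument that pins the distinguishing class into a location where only a controlled power of $U$ can arise. A secondary obstacle is ensuring that the topological isotopy persists at every stage of the family; if Conway--Powell is the tool of choice, this constrains the family operation to those that preserve the cyclic fundamental group condition on the disk complement.
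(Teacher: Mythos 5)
Your abstract framework is correct and is precisely the Juh\'asz--Zemke principle: Proposition~\ref{prop:stab} gives $\bn(\Sigma')=H\cdot\bn(\Sigma)$ for an internal stabilization, so the $H$-torsion order of the difference element $\bn(D)(1)-\bn(D')(1)$ does bound the stabilization distance from below. Your instinct that the family should come from cables or satellites of a fixed base pair is also correct in spirit --- Guth's examples are exactly the $(n,1)$-cables of the positron knot $J$ and of the positron disks $D,D'$.

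However, the proposal runs the argument in the wrong theory, and this is not a cosmetic choice. Guth's proof works in link Floer homology $\hflm$ over $\ff_2[U,V]$, not Bar-Natan homology. The reason is precisely the obstacle you flag in your last paragraph: one needs to control the torsion order of the difference element for the cabled disks, and in the Floer world this is possible because bordered Floer homology provides a computable, functorial model for how the $(n,1)$-cabling pattern acts on cobordism maps. Guth first pins down $\hflm(D)$ and $\hflm(D')$ for the base positron disks (building on the $\widehat\hfl$ computations from \cite{dms:equivariant} and an analysis of the involution $\tau$), and then uses the bordered Floer module of the cabling pattern to show that the torsion order of the difference grows linearly in $n$. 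There is no known analogue of this mechanism for Bar-Natan homology --- indeed, as the surrounding text states explicitly, ``it remains open whether [Khovanov/Bar-Natan homology] can distinguish pairs of surfaces with arbitrarily large stabilization distance.'' The best current Khovanov-theoretic result (Hayden, stated next in the paper) achieves stabilization distance $\geq 2$ but not arbitrarily large distance. So the torsion-amplification step you identify as ``the main obstacle'' is not merely a technical hurdle in your chosen framework; it is the open problem, and switching to $\hflm$ together with bordered Floer technology is exactly how Guth circumvents it. Your proof proposal therefore does not establish the stated theorem, even though it correctly reproduces the high-level strategy.
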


The slice knots studied in \cite{guth} are the $(n,1)$-cables of the positron knot $J$ from Figure~\ref{fig:positron-disks}, and the slice disks are obtained by applying the $(n,1)$-cabling operation to the disks $D$ and $D'$ from Figure~\ref{fig:positron-disks}. Building on the computations of $\widehat\hfl(J)$ and the maps $\widehat\hfl(D)$ and $\widehat\hfl(D')$ from \cite{dms:equivariant}, Guth calculates $\hflm(D)$ and $\hflm(D')$ (up to a small indeterminacy). Here a key step is to understand the action of the symmetry $\tau$ from Figure~\ref{fig:positron-disks} on $\hflc$. This is then paired with arguments from bordered Floer homology that use the bordered Floer homology of the $(n,1)$-cabling pattern to (partially) upgrade these calculations to the cobordism maps induced by the cabled disks. 

Building on the above applications of Whitehead doubling and cabling to the study of knotted surfaces, a broader analysis of satellite operations on cobordism maps was undertaken in \cite{guth-hayden-kang-park}. There it is shown that, for example, if a pair of slice disks $D,D'$ for a knot $K$ induce distinct maps on $\widehat \hfl$, then so do the satellite disks $P(D)$ and $P(D')$ for the satellite knot $P(K)$ for any satellite pattern $P$ satisfying a certain \emph{non-cancellation condition} from bordered Floer homology. This is satisfied by Whitehead doubles, $(n,1)$-cables, the Mazur pattern, and many other patterns. Moreover, whenever $D$ and $D'$ induce distinct maps on $\widehat \hfl$, the stabilization distance between the $(n,1)$-cables of $D$ and $D'$ is at least $n$. 

%

It is also known that the cobordism maps on Khovanov homology can distinguish knotted surfaces that have been internally stabilized --- but it remains open whether it can distinguish pairs of surfaces with arbitrarily large stabilization distance.

\begin{theorem}[Hayden \cite{hayden:atomic}]
For all $g \geq 1$, there are exotically knotted pairs of properly embedded, genus-$g$ surfaces in $B^4$ that induce distinct maps on Bar-Natan homology, even after one internal stabilization.
\end{theorem}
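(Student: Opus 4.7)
The plan is to build the genus-$g$ exotic pairs from the positron disks $D, D' \subset B^4$ of \cite{hayden:corks, hayden-sundberg} by internal stabilization, and to transport the Khovanov-level distinction $\kh(D) \neq \pm \kh(D')$ into Bar-Natan homology where it survives repeated stabilization. Topological isotopy rel boundary of $D$ and $D'$ is provided by \cite{conway-powell} (both complements have $\pi_1 \cong \zz$), and it extends over standard handles to give a topological isotopy between the stabilized surfaces. Thus the entire content of the theorem is the smooth distinction on $\bn$ after one further stabilization.

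The essential input is the local TQFT formula for internal stabilization in the Bar-Natan setting. In the Bar-Natan category, an internal stabilization is locally equivalent to inserting a punctured torus; via the Frobenius algebra $A = R[\var][X]/(X^2 - \var)$ (from which Example~\ref{ex:closed-kh} follows by direct computation of $\epsilon \circ \Phi^g \circ \eta$), this modifies the cobordism map by multiplication by $\Phi(1) = 2X$, viewed as an action of $A$ on $\bn(\partial \Sigma)$. Over $\zz[\var]$ this is a nonzero operator, whereas in Khovanov homology reduced mod $2$ it vanishes -- precisely explaining why $\kh$ fails to distinguish stabilized surfaces. Iterating, if $\Sigma_g$ denotes $D$ internally stabilized $g$ times (and analogously for $\Sigma'_g$), then
\[
\bn(\Sigma_g) = \pm (2X)^g \cdot \bn(D), \qquad \bn(\Sigma'_g) = \pm (2X)^g \cdot \bn(D'),
\]
and one further stabilization multiplies each by an additional factor of $2X$.

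The known distinction $\kh(D) \neq \pm \kh(D')$ of \cite{hayden-sundberg} upgrades to $\bn(D) \neq \pm \bn(D')$ by naturality of cobordism maps under the quotient map $\bn \to \kh$ (setting $\var = 0$). The proof then reduces to the claim that $(2X)^{g+1}\bigl(\bn(D) \pm \bn(D')\bigr) \neq 0$ in $\bn(K)$, where $K$ is the positron knot. Since $X^2 = \var$, this is a concrete statement about the $2$-torsion and $\var$-torsion orders of the difference and sum classes in the appropriate bigraded summand.

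The main obstacle is precisely this torsion analysis: $\bn(K)$ typically carries substantial $\var$-torsion, so one must locate the relevant class in a summand on which high powers of $\var$ act freely, or else exhibit a $\var$-equivariant homomorphism $\bn(K) \to \zz[\var]$ separating $\bn(D)$ from $\pm \bn(D')$ -- in the spirit of the separating homomorphisms used by \cite{sundberg-swann}. The small crossing number of the positron knot and the chain-level techniques of \S\ref{sec:khovanov}--\S\ref{sec:tqft} (especially neck-cutting and cancellation in the Bar-Natan category) make this verification tractable, and once established it yields the conclusion for all $g \geq 1$ simultaneously.
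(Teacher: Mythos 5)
Your proposal correctly identifies the organizing idea — lift the Khovanov-level distinction $\kh(D)\neq\pm\kh(D')$ through the quotient $\pi_*:\bn\to\kh$ to obtain a nonzero difference class $\delta_{\bn}=\bn(D)(1)-\bn(D')(1)$, and then argue that $\delta_{\bn}$ survives multiplication by powers of the deformation variable. This is indeed the strategy the paper sketches for its Proposition~3.9. But there are two genuine problems.

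First, your closing claim that a single torsion verification ``yields the conclusion for all $g\geq 1$ simultaneously'' cannot be right. You build the genus-$g$ pair by internally stabilizing the \emph{same} disk pair $g$ times, so the once-stabilized genus-$g$ surfaces induce maps differing by $(2X)^{g+1}\delta_{\bn}$ (in your notation), or $H^{g+1}\delta_{\bn}$ in the paper's. For this to be nonzero for all $g$ you would need $\delta_{\bn}$ to not be $H$-torsion. But the paper shows explicitly (in the discussion preceding the Proposition in \S\ref{subsec:bar-natan}) that $\delta_{\bn}$ \emph{must} be an $H$-torsion element: the maps $\bn(D),\bn(D'):\ff_2[H]\to\bn(K)$ agree modulo $H$-torsion because a slice disk sends $1$ onto a fixed generator of the lower tower, forced by grading and the structure of $\bn(K)/(\text{$H$-torsion})$. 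So iterated stabilization of one fixed disk pair necessarily stops working at some finite genus; for the full statement one needs a different pair of surfaces for each $g$ (as in the constructions of \cite{hayden-sundberg,hayden:atomic}), with a separate torsion estimate for each. The survey indeed flags this: whether these cobordism maps can detect \emph{arbitrarily} large stabilization distance remains open.

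Second, the step you call ``tractable'' is exactly the content of the paper's argument, and it is not done by hand. The paper proves $H\cdot\delta_{\bn}\neq 0$ for its specific example by computing (via KnotJob) the first two pages of the Bar-Natan--Lee--Turner spectral sequence for $K$ and observing that everything in bigrading $(0,1)$ survives to page two; no chain-level ``neck-cutting'' shortcut is offered, and the knot used (Figure~\ref{fig:from946}) is not the bare positron but a larger diagram from \cite[\S 5.2]{hayden:atomic}. As written, your proposal defers precisely the step that carries the theorem's content, and then draws a conclusion (uniformity in $g$) that the deferred step cannot support. A minor additional caveat: after stabilizing, the surface complements no longer have cyclic $\pi_1$, so ``extending the Conway--Powell isotopy over standard handles'' needs to be phrased carefully (e.g., perform the stabilizations in a ball on which the topological ambient isotopy is the identity).
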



The proof using Bar-Natan homology blends a manual argument with a calculation from Schuetz' program \textsl{KnotJob} \cite{knotjob}. A related example is discussed in \S\ref{sec:tqft}.

\smallskip
  
\subsection{Machine computation}

The link homology theories considered here, namely the mainstream flavors of link Floer homology and Khovanov homology, admit combinatorial descriptions that render them algorithmically computable. At the time of writing, efficient software exists to compute the hat-flavored knot Floer homology $\widehat \hfk$  of knots with $>100$ crossings and Khovanov homology $\kh$ of links with $>50$ crossings relatively quickly:

\begin{itemize}[leftmargin=25pt,itemsep=-3pt]

\item  knot Floer homology: \textsl{HFK-Calculator} \cite{hfk-calc}, implemented in \textsl{SnapPy} \cite{snappy}

\item  Khovanov homology: \textsl{KnotJob} \cite{knotjob}

\end{itemize}

\noindent While the implementation of algorithms to compute cobordism maps in these theories is in a nascent state (see, e.g., \cite{sundberg:github}), the computability of the homology groups themselves can be leveraged in certain calculations of the cobordism maps. Some examples of this are illustrated in some of the applications discussed below, and toy examples are given in the exercises.

\smallskip

\subsection{Exercises}

We close this introduction with a handful of exercises.

%
%
%



\begin{exercise}
\begin{enumerate}[label=\bfseries(\alph*),leftmargin=20pt]
\item 

Draw movies of link diagrams encoding the  surfaces in Figure~\ref{fig:embedded-ribbon}, which are depicted as embedded or ribbon-immersed surfaces in $S^3$.

\begin{figure}[b]\center
\includegraphics[width=.825\linewidth]{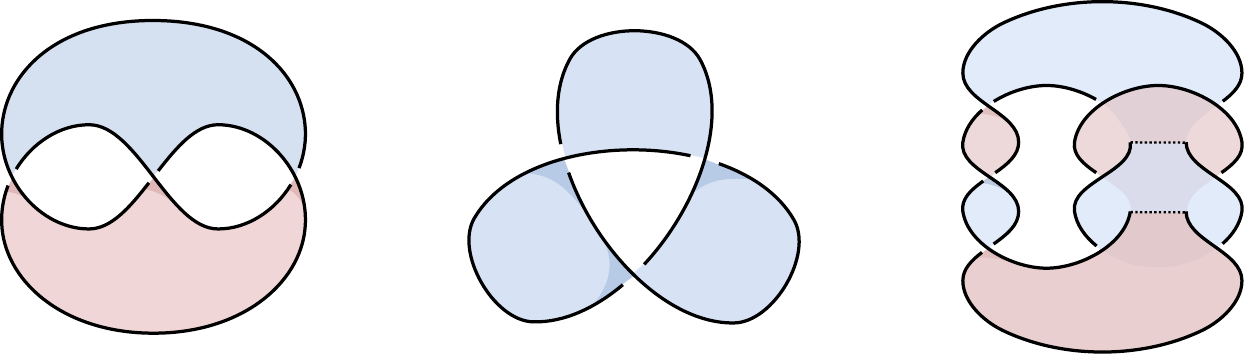}
\captionsetup{width=\linewidth}
\caption{A Seifert surface for the trefoil $3_1$ (left), a non-orientable spanning surface for $3_1$ (middle), and a slice disk bounded by the pretzel knot $P(-3,3,-3)$, also called $m(9_{46})$ (right).}\label{fig:embedded-ribbon}
\end{figure}

\item Draw movies for the slice disks indicated in Figure~\ref{fig:slice-band}, which are depicted as knot diagrams decorated with bands such that performing the associated band move transforms the knot into an unlink, which can be capped off with disks.

\begin{figure}
\center 
\def\svgwidth{.55\linewidth}
\begingroup%
  \makeatletter%
  \providecommand\color[2][]{%
    \errmessage{(Inkscape) Color is used for the text in Inkscape, but the package 'color.sty' is not loaded}%
    \renewcommand\color[2][]{}%
  }%
  \providecommand\transparent[1]{%
    \errmessage{(Inkscape) Transparency is used (non-zero) for the text in Inkscape, but the package 'transparent.sty' is not loaded}%
    \renewcommand\transparent[1]{}%
  }%
  \providecommand\rotatebox[2]{#2}%
  \newcommand*\fsize{\dimexpr\f@size pt\relax}%
  \newcommand*\lineheight[1]{\fontsize{\fsize}{#1\fsize}\selectfont}%
  \ifx\svgwidth\undefined%
    \setlength{\unitlength}{1791.81492603bp}%
    \ifx\svgscale\undefined%
      \relax%
    \else%
      \setlength{\unitlength}{\unitlength * \real{\svgscale}}%
    \fi%
  \else%
    \setlength{\unitlength}{\svgwidth}%
  \fi%
  \global\let\svgwidth\undefined%
  \global\let\svgscale\undefined%
  \makeatother%
  \begin{picture}(1,0.41738986)%
    \lineheight{1}%
    \setlength\tabcolsep{0pt}%
    \put(0,0){\includegraphics[width=\unitlength,page=1]{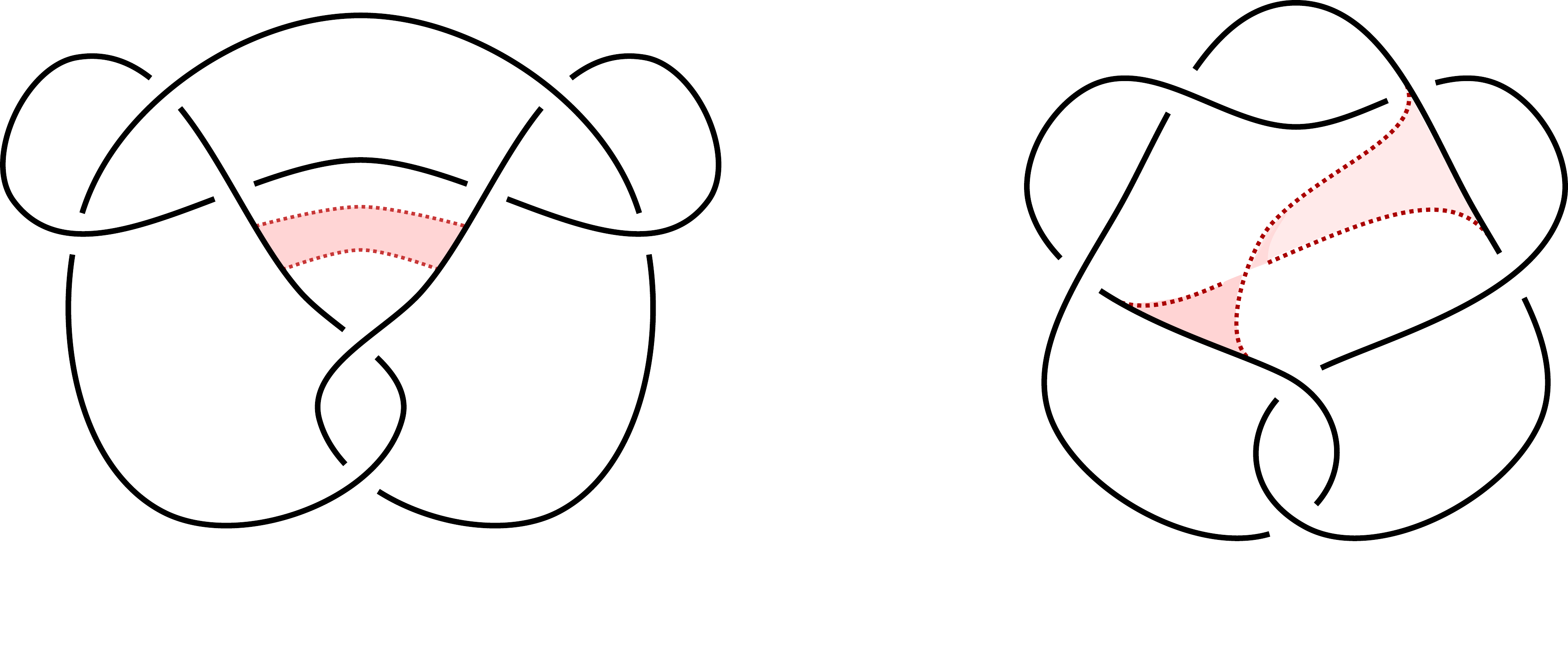}}%
    \put(0.83481216,0.00692512){\color[rgb]{0,0,0}\makebox(0,0)[t]{\smash{\begin{tabular}[t]{c}$6_1$\end{tabular}}}}%
    \put(0.23662571,0.01223317){\color[rgb]{0,0,0}\makebox(0,0)[t]{\smash{\begin{tabular}[t]{c}$8_{20}$\end{tabular}}}}%
  \end{picture}%
\endgroup%

\caption{Band moves encoding slice disks for the knots $8_{20}$ and $6_1$.}\label{fig:slice-band}

\end{figure}

\item Find a diagram of $6_1$ such that the associated movie for the aforementioned disk involves no Reidemeister III moves. (How do we make sure it is the same disk?)
\end{enumerate}
\end{exercise}

\begin{exercise} This problem introduces you to using SnapPy \cite{snappy} and KnotJob \cite{knotjob} to calculate link homologies:

\begin{quote}
{\small $\bullet$ } {\normalsize\href{https://snappy.computop.org/installing.html}{snappy.computop.org/installing.html}}

\vspace{-3pt}

{\small $\bullet$ } {\normalsize\href{https://www.maths.dur.ac.uk/users/dirk.schuetz/knotjob.html}{maths.dur.ac.uk/users/dirk.schuetz/knotjob.html}}
\end{quote}

\begin{enumerate}[label=\bfseries(\alph*),leftmargin=20pt]
\item Use SnapPy to find the knot Floer homology of the knots $3_1$, $4_1$, $6_1$, and $9_{46}$. SnapPy has great documentation, but here's a head start on the commands:

\begin{quote}\normalsize
\texttt{K = Link(`3\_1')}

\vspace{-0.075in}

\texttt{K.knot\_floer\_homology()}
\end{quote}
\medskip

\item Use KnotJob to find Khovanov homology over $\zz$ for $3_1$, $4_1$, $6_1$, and $9_{46}$. 


For example, click \compfont{File $\to$ New Link $\to$ Torus Link}. Set $(p,q)=(3,2)$ for $3_1$, then click \compfont{OK}. Select \compfont{T(3,2)} from the side menu, then click \compfont{Char 0 Khovanov Cohomology $\to$ Integral $\to$ OK}. After the calculation finishes, click the \compfont{unreduced} button next to \compfont{Khovanov homology}.

 \emph{\small Note: Observe that $6_1$ and $9_{46}$ satisfy $\widehat{\hfl}(6_1) \cong \widehat{\hfl}(9_{46})$ and $\kh(6_1) \cong \kh(9_{46})$.}


 \smallskip
 
\item At the chain level, recall the duality isomorphism $\ckh(-K) \cong \left(\ckh(K)\right)^*$, where
$$
\ckh^{h,q}(-K) \cong \operatorname{Hom}_\rcal(\ckh^{-h,-q}(K), \rcal).
$$
Use this and the Universal Coefficients Theorem to find $\kh(-3_1)$, based the above calculation of $\kh(3_1)$. Then check your answer using KnotJob (via \compfont{New Link $\to$ Mirror Links $\to$ T(3,2)}).

\vspace{-0.04in}

\emph{Note: Keep in mind that $\ckh$ is a \underline{co}chain complex, i.e., $(h,q) \overset{\partial}{\rightsquigarrow} (h+1,q)$.}

\end{enumerate}

\end{exercise}

\smallskip

\begin{exercise}
Show that any slice disk $D \subset B^4$ induces nonzero maps on both $\widehat\hfl$ and $\kh$ (whether it is viewed as a cobordism $\emptyset \to \partial D$ or $\partial D \to \emptyset$).
\end{exercise}



\smallskip
\smallskip

\begin{exercise}
Consider the knot $K=10_{140}$, which is the pretzel knot $P(3,4,-3)$.

\begin{enumerate}[label=\bfseries(\alph*),leftmargin=25pt]
\item Show that $K$ bounds a slice disk.
\item Look up or calculate $\kh(K)$, then prove that every slice disk for $K$ induces the same map on Khovanov homology up to overall sign (whether viewed as a  cobordism $\emptyset \to K$ or $K \to \emptyset$).


\emph{Hint: This holds for any slice disk bounded by knot with sufficiently simple $\kh$. It may be useful to first prove it for disks bounded by the unknot.}

\end{enumerate}

\end{exercise}

\bigskip
\bigskip
\bigskip

\section{Khovanov homology}\label{sec:khovanov}

\medskip


This section provides an introduction to the basics of Khovanov homology. Our goal is to give a condensed exposition that provides enough information to discuss the desired properties and applications of the cobordism maps. We draw heavily on \cite{kwz:immersed,hayden-sundberg}. We begin in \S\ref{subsec:khovanov} by  defining the Khovanov chain complex, then proceed to discuss the maps induced by link cobordisms in \S\ref{subsec:induced}. Our goal is to provide a number of concrete examples and highlight some of the applications from \S\ref{hsec:intro}. 



\smallskip

\subsection{The Khovanov chain complex}\label{subsec:khovanov}


\subsection*{Cube of resolutions } Let $\dcal$ be an $n$-crossing diagram of a link $L$. A \textit{smoothing} of $\dcal$  is a planar $1$-manifold obtained by replacing each crossing $\crossing$ in $D$ with either a $0$-smoothing $\zsmooth$ or a $1$-smoothing $\osmooth$ \!; see Figure~\ref{fig:resolutions}(a). (For an example with the right-handed trefoil, see Figure~\ref{fig:tref-complex}.)  
After fixing an ordering of the crossings of $\dcal$, we may associate to each vertex $v \in \{0,1\}^n$ of the $n$-dimensional cube a smoothing $\dcal(v)$ of $\dcal$   by applying the $v_i$-resolution to the $i^{\text{th}}$ crossing of $\dcal$ for $i = 1,\ldots,n$.

In this \emph{cube of resolutions}, each edge can be viewed as an arrow $v\to v'$ between vertices that differ at a single coordinate $i$, with $v_i=0$ and $v'_i=1$. To this edge  we assign a cobordism from $\dcal(v)$ to $\dcal(v')$ given by a saddle cobordism in a neighborhood of the $i^{\text{th}}$ as in Figure~\ref{fig:resolutions}(b), and given by a product outside the crossing region.

\medskip

\begin{figure}
\center
\def\svgwidth{.8\linewidth}
\begingroup%
  \makeatletter%
  \providecommand\color[2][]{%
    \errmessage{(Inkscape) Color is used for the text in Inkscape, but the package 'color.sty' is not loaded}%
    \renewcommand\color[2][]{}%
  }%
  \providecommand\transparent[1]{%
    \errmessage{(Inkscape) Transparency is used (non-zero) for the text in Inkscape, but the package 'transparent.sty' is not loaded}%
    \renewcommand\transparent[1]{}%
  }%
  \providecommand\rotatebox[2]{#2}%
  \newcommand*\fsize{\dimexpr\f@size pt\relax}%
  \newcommand*\lineheight[1]{\fontsize{\fsize}{#1\fsize}\selectfont}%
  \ifx\svgwidth\undefined%
    \setlength{\unitlength}{1204.93271715bp}%
    \ifx\svgscale\undefined%
      \relax%
    \else%
      \setlength{\unitlength}{\unitlength * \real{\svgscale}}%
    \fi%
  \else%
    \setlength{\unitlength}{\svgwidth}%
  \fi%
  \global\let\svgwidth\undefined%
  \global\let\svgscale\undefined%
  \makeatother%
  \begin{picture}(1,0.25411272)%
    \lineheight{1}%
    \setlength\tabcolsep{0pt}%
    \put(0,0){\includegraphics[width=\unitlength,page=1]{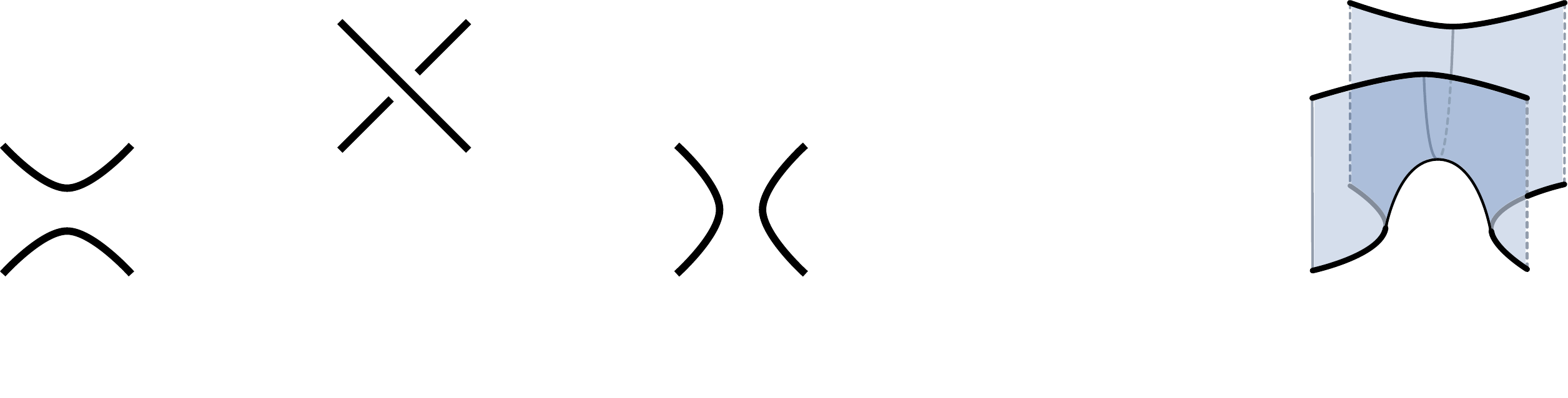}}%
    \put(0.25787223,0.00497414){\color[rgb]{0,0,0}\makebox(0,0)[t]{\smash{\begin{tabular}[t]{c}(a)\end{tabular}}}}%
    \put(0.14407513,0.17762925){\color[rgb]{0,0,0}\makebox(0,0)[t]{\smash{\begin{tabular}[t]{c}$0$\end{tabular}}}}%
    \put(0.36902266,0.17762925){\color[rgb]{0,0,0}\makebox(0,0)[t]{\smash{\begin{tabular}[t]{c}$1$\end{tabular}}}}%
    \put(0.91766142,0.00497414){\color[rgb]{0,0,0}\makebox(0,0)[t]{\smash{\begin{tabular}[t]{c}(b)\end{tabular}}}}%
    \put(0,0){\includegraphics[width=\unitlength,page=2]{resolutions.pdf}}%
  \end{picture}%
\endgroup%

\caption{Crossing resolutions, and a saddle cobordism between them.}\label{fig:resolutions}
\end{figure}

\begin{figure}\center
\bigskip
\bigskip

\def\svgwidth{.925\linewidth}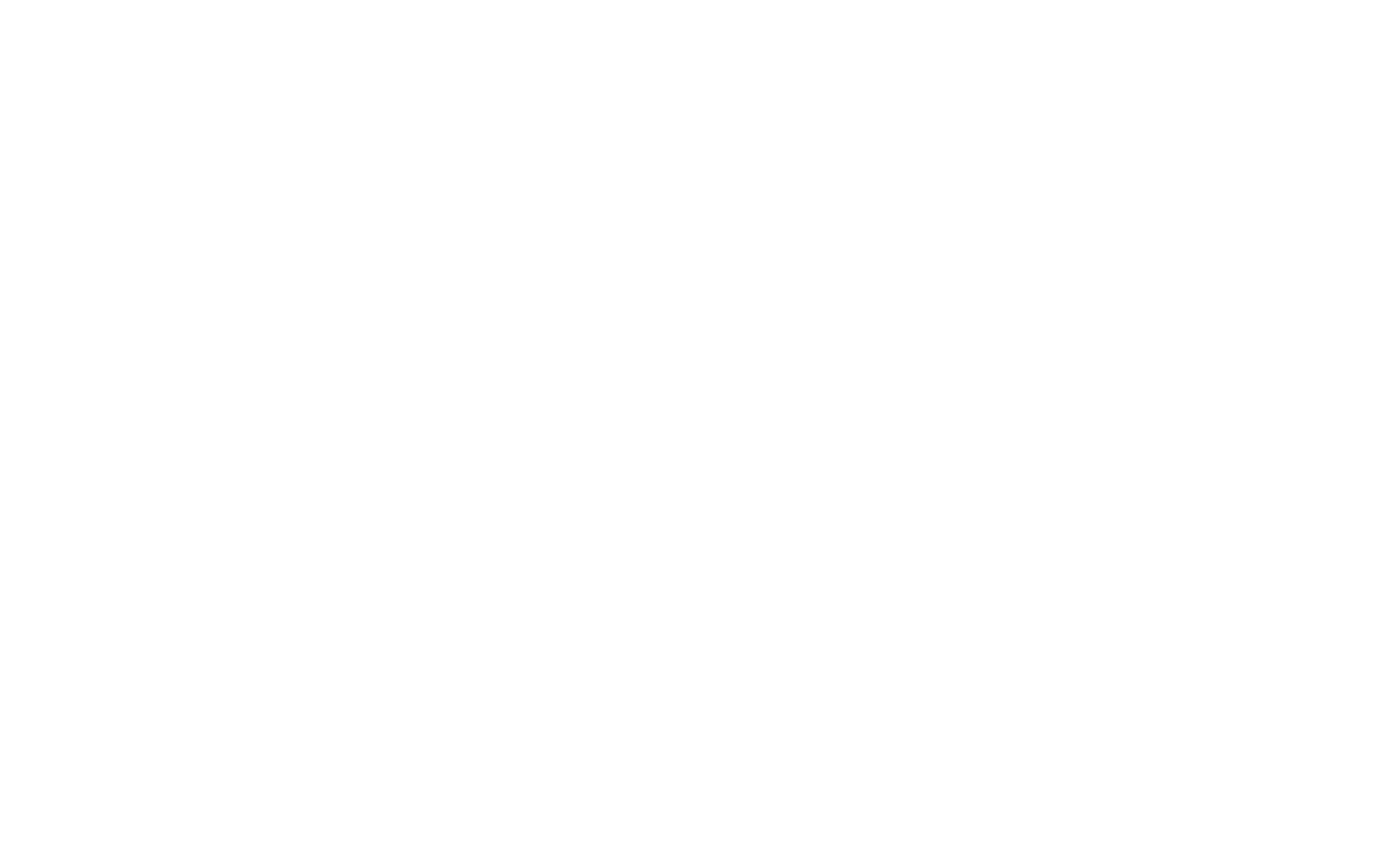
\caption{The cube of resolutions for a diagram of the trefoil, with a choice of enumeration of the crossings. Smoothings are labeled with their binary coordinates in $\{0,1\}^{n=3}$ and arrows are labeled as merges ($m$) or splits ($\Delta$).}\label{fig:tref-complex}
\end{figure}



\subsection*{Chain complex } 
Viewing the cube of resolutions as a collection of closed 1-manifolds and 2-dimensional cobordisms between them, we now wish to convert the 1-manifolds to $\rcal$-modules and the 2-dimensional cobordisms to $\rcal$-module homomorphisms as follows: 
\begin{itemize}
\item Each circle in a smoothing is assigned the free $\rcal$-module $\acal=\rcal \langle \bfo,\bfx\rangle$.

\item Each smoothing $\dcal(v)$ is assigned the tensor product $\acal_{\dcal(v)}=\acal^{\otimes | \dcal(v)|}$ of the copies of $\acal$ assigned to its component circles. 


\item For each edge $v\to v'$, the corresponding cobordism $\dcal(v) \to \dcal(v')$ is assigned a map $\acal^{\otimes |\dcal(v)|} \to \acal^{\otimes |\dcal(v')|}$ by applying the following rules to its connected components:
%

\begin{align*}
\raisebox{-.28cm}{\includegraphics[height=0.65cm]{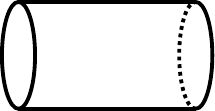}}  \qquad \quad \ \, 
\id: \acal \to \acal \, ; \quad \ 
& \ \id(\mathbf{a})=\mathbf{a}
\tag{identity}
\\
\\
\raisebox{-.51cm}{\includegraphics[height=1.25cm]{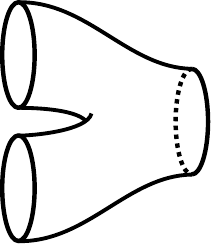}} \qquad 
m: \acal \otimes \acal \to \acal \,; \quad \ 
&\begin{cases} m(\bfo \otimes \bfo) = \bfo  
\\
m(\bfo \otimes \bfx) = \bfx
\\
m(\bfx \otimes \bfo)=\bfx
\\
m(\bfx \otimes \bfx)= 0
\end{cases}
\tag{merge}
\\
\\
\raisebox{-.51cm}{\includegraphics[height=1.25cm]{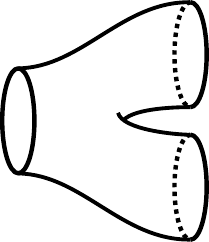}} \qquad 
\Delta: \acal \to \acal \otimes \acal \, ; \quad \ 
&\begin{cases} \Delta(\bfo) = \bfo \otimes \bfx + \bfx \otimes \bfo 
\\
\Delta(\bfx) = \bfx \otimes \bfx
\end{cases}
\tag{split}
\end{align*} 


\medskip

\hspace{1em}  With these definitions alone, each 2-dimensional face of the cube would induce a commutative square. To obtain a differential when $2 \neq 0$ in $\rcal$, we must sprinkle signs across the cube to ensure these squares anti-commute. 
This can be achieved by multiplying the map induced by $\dcal(v)\to \dcal(v')$ by $(-1)^{v_1 +\cdots + v_{i-1}}$. Now we define the differential $d$ at each vertex of the cube by summing over all the outgoing (signed) edge maps.
\end{itemize}
\medskip

\noindent We obtain a chain complex $\ckh(\dcal)$ with underlying $\rcal$-module $\boldsymbol{\oplus}_{v \in \{0,1\}^n} \, \acal_{\dcal(v)}$ and whose differential $d$ is defined by the signed edge maps above.  Taking homology yields a $\rcal$-module that we denote by $\kh(L)$.

\begin{theorem}[Khovanov \cite{Khovanov}]
The isomorphism type of the $\rcal$-module $\kh(L)$ is an invariant of $L$ up to isotopy.
\end{theorem}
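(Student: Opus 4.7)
The plan is to prove that whenever two diagrams $\dcal$ and $\dcal'$ represent the same link $L$, the chain complexes $\ckh(\dcal)$ and $\ckh(\dcal')$ are chain-homotopy equivalent. Taking homology then yields isomorphic $\rcal$-modules, and Reidemeister's theorem --- any two diagrams of the same link are related by a finite sequence of planar isotopies and Reidemeister moves R1, R2, R3 --- reduces the question to verifying chain-homotopy equivalence under each such move. Planar isotopies are handled instantly, since the cube of resolutions, the modules $\acal_{\dcal(v)}$, and the saddle-induced maps depend only on the combinatorics of the crossings and their resolutions, which is unchanged.

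The core tool is the \emph{cancellation lemma} (Gaussian elimination for chain complexes): if a chain complex contains a two-term acyclic summand given by an isomorphism between direct summands of consecutive chain groups, one can delete this subcomplex to obtain a chain-homotopy equivalent smaller complex, provided the off-diagonal arrows are adjusted by a specific formula. Two observations make this tool directly applicable here. First, the Khovanov differential is \emph{local}: the map assigned to each edge of the cube acts as the identity outside a neighborhood of the crossing being resolved, so cancellation in a portion of the cube associated with the crossings involved in a Reidemeister move can be carried out while tensoring the rest passively. Second, the Frobenius identities satisfied by $(m, \Delta, \bfo, \bfx)$ guarantee that the partial-cube maps that appear as cancellation candidates are indeed isomorphisms on the relevant summands.

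I would then dispatch the Reidemeister moves in turn. For R1, the extra crossing contributes a two-term complex whose smoothings differ by a small circle; splitting the $\acal$-factor attached to this circle into its $\bfo$- and $\bfx$-labeled pieces, one piece cancels acyclically against the other vertex via the merge (or split) map, leaving a subcomplex chain-isomorphic, up to the expected grading shift, to the complex of the kinkless diagram. For R2, the square of four resolutions produces two corners that form an acyclic pair (again cancelled via a merge/split isomorphism) and a surviving corner identified with the crossingless local complex, after which the residual off-diagonal map is checked to vanish using the Frobenius relations. R3, the most intricate case, is handled by noting that both sides of the move contain a distinguished crossing whose 0- and 1-resolutions reduce R3 to two separate R2-invariance statements; alternatively, one finds a common simplification of the two complexes using the already-established R1 and R2 equivalences.

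The main obstacle I anticipate is R3, where the bookkeeping of signs, grading shifts, and explicit chain homotopies becomes heavy, especially if one aims to make the equivalences explicit enough to later establish functoriality. The cleanest resolution, foreshadowed in \S\ref{sec:tqft} via Bar-Natan's formal cobordism category, is to lift the argument to a chain complex of formal 2-dimensional cobordisms modulo local relations; there, the R2 chain-homotopy equivalences apply at a universal (pre-TQFT) level and R3-invariance follows by applying them twice in succession, bypassing the need for case-by-case computations with $\acal$.
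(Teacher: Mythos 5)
Your outline is correct, and it is a well-established route to the theorem, but it is a genuinely different one from the one the paper sketches. The paper's \S\ref{sec:tqft} takes Bar-Natan's categorical tack: build the formal complex $\lb T\rb$ in $\mathbf{Kob_{/h}}$, impose the local relations $(S)$, $(T)$, $(\Tu)$, prove Reidemeister invariance there by exhibiting explicit morphisms $f,g$ and a homotopy $h$ (this is Figure~\ref{fig:R1-chain} through Figure~\ref{fig:R1-invariance-hard}), and only then apply the TQFT $\fcal$ to recover $\ckh$. You instead propose working directly on $\ckh(\dcal)$ with delooping plus Gaussian elimination, which is closer in spirit to Bar-Natan's later ``fast computations'' paper \cite{bar-natan:fast} than to either Khovanov's original explicit-homotopy argument or the formal-complex argument in the paper. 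Both routes work. What the paper's approach buys is that the Reidemeister equivalences, once established for formal complexes, apply verbatim to every Frobenius-algebra flavor at once (Khovanov, Lee, Bar-Natan) and localize cleanly to tangles, so there is no need to re-verify the same homotopies after each change of coefficients; it is also the right setting for the subsequent functoriality discussion. What your approach buys is concreteness and computational efficiency: once you have stated the cancellation lemma carefully, R1 and R2 are short, mechanical eliminations, and the strategy doubles as an algorithm.

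Two points deserve more care if you flesh this out. First, for R2, after delooping the corner with the extra circle and performing Gaussian elimination twice, the surviving complex is automatically the crossingless one; the adjusted differentials are produced by the elimination formula and need to be checked to be the expected saddle maps, which indeed uses the Frobenius identities, so your remark is on the right track but slightly underspecified. Second, and more seriously, your R3 plan --- ``reduce R3 to two R2-invariance statements via a distinguished crossing'' --- is a cone argument, and for it to close one must show not only that the $0$- and $1$-resolution subcomplexes on the two sides of R3 are homotopy equivalent, but that the connecting saddle maps are intertwined up to homotopy by those equivalences. This compatibility is nontrivial and is exactly where the direct approach gets painful; it is also why you (correctly) point to the formal cobordism category as the ``cleanest resolution.'' The paper's sketch handles this by arguing in $\mathbf{Kob_{/h}}$ and then invoking planar algebras to globalize from local tangle complexes to full link complexes, a step your proposal implicitly replaces by the locality observation at the start. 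Worth noting: the paper itself only carries out the R1 case in full and asserts that R2 and R3 ``use some similar ingredients,'' so neither account is complete on the page, but the sketches are compatible and each can be filled in along standard lines.
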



We will say a bit more about the proof of invariance in \S\ref{sec:tqft}. 

\begin{remark}
We will define a $\zz\oplus \zz$-bigrading on $\ckh(\dcal)$ and thus $\kh(L)$ shortly below, and the proof of invariance holds at the level of bigraded $\rcal$-modules. (As defined up to this point, $\ckh(\dcal)$ has a \emph{cubical filtration} arising from the lexicographic partial order on the cube's vertices $v\in \{0,1\}^n$; the differential is filtered in the sense that the cube has a directed edge $v \to v'$ only if $v \leq v'$. The homological grading on $\ckh(\dcal)$ will arise from a ``flattening'' of this partial order.)
\end{remark}

Note that we may think of the generators of the chain complex as smoothings of $\dcal$ whose components are labeled with $\bfo$'s and $\bfx$'s. For convenience, we sometimes further decorate these with light gray arcs to indicate 0-resolutions as in Figure~\ref{fig:tref-chains} below.  For a generator represented by a single labeled smoothing, there is a simple criterion for checking whether it is a cycle (c.f., \cite[Prop. 3.2]{elliott}):

\begin{figure}
\center
\def\svgwidth{\linewidth}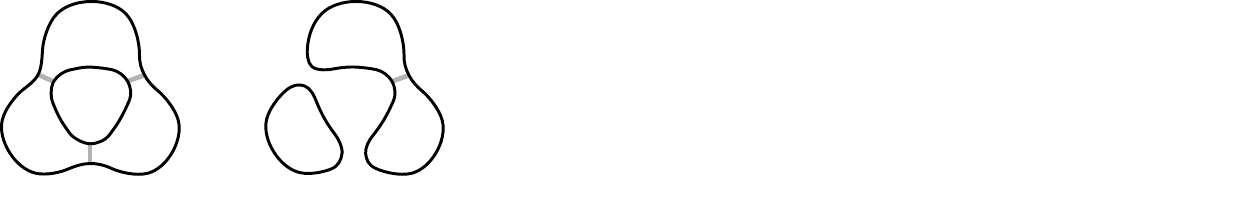
\caption{Examples of generators in $\ckh(3_1)$.}\label{fig:tref-chains}
\end{figure}

 \begin{proposition}
A labeled smoothing $\alpha$ is a cycle if and only if every $0$-resolution in $\alpha$, when changed to a $1$-resolution, merges two $\bfx$-labeled loops.
 \end{proposition}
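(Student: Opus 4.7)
The plan is to argue that $d\alpha = 0$ if and only if every edge in the cube of resolutions emanating from $\alpha$'s vertex induces the zero map on $\alpha$, and then to classify when each such edge map vanishes using the definitions of $m$ and $\Delta$.

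First I would unpack the differential. Let $v \in \{0,1\}^n$ be the vertex corresponding to the underlying smoothing of $\alpha$. The outgoing edges from $v$ in the cube are in bijection with the $0$-resolutions of $\alpha$: each such resolution $i$ gives an edge $v \to v^{(i)}$, where $v^{(i)}$ differs from $v$ only in the $i$-th coordinate. Since distinct outgoing edges land at distinct vertices of the cube, the corresponding summands $\acal_{\dcal(v^{(i)})}$ in $\ckh(\dcal)$ are linearly independent direct summands. Consequently, the terms of $d\alpha$ indexed by different $0$-resolutions cannot cancel each other (the signs $(-1)^{v_1 + \cdots + v_{i-1}}$ are irrelevant for this), so $d\alpha = 0$ if and only if each individual edge map sends $\alpha$ to $0$.

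Next I would analyze a single edge map. The map associated to $v \to v^{(i)}$ acts as the identity on every circle of $\alpha$ not involved in changing the $i$-th resolution, and as either $m$ or $\Delta$ on the circles that are, depending on whether switching the resolution at crossing $i$ merges two circles into one or splits one circle into two. Because identity factors are nonzero on basis elements, the entire edge map annihilates $\alpha$ if and only if the merge or split factor annihilates the labels of the involved circles. Reading off the definitions:
\begin{itemize}
\item The merge $m$ vanishes precisely on $\bfx \otimes \bfx$; on the other three inputs it is nonzero.
\item The split $\Delta$ is \emph{never} zero: $\Delta(\bfo) = \bfo\otimes\bfx+\bfx\otimes\bfo$ and $\Delta(\bfx)=\bfx\otimes\bfx$ are both nonzero.
\end{itemize}

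Putting this together, $d\alpha = 0$ forces every outgoing edge to be a merge (otherwise some $\Delta$ contribution survives), and each such merge must be of two $\bfx$-labeled circles. Conversely, if each $0$-resolution, when switched to a $1$-resolution, merges two $\bfx$-labeled loops, then each edge map vanishes and hence $d\alpha = 0$. This yields the claim. I do not anticipate any real obstacles here; the only subtle point is the no-cancellation observation, which amounts to noting that different outgoing edges target disjoint summands of $\ckh(\dcal)$.
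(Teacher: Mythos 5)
Your proof is correct and complete. The paper itself does not supply a proof of this proposition; it is stated as a criterion with a citation to Elliott (cf.\ Prop.\ 3.2 there), so there is no in-paper argument to compare against. Your argument is the standard one, and the two key observations you isolate are exactly right: (i) distinct outgoing edges of the cube land in pairwise distinct direct summands $\acal_{\dcal(v^{(i)})}$ of $\ckh(\dcal)$, so there can be no cancellation among them and $d\alpha=0$ reduces to vanishing of each individual edge map; and (ii) since each edge map is a tensor of identities with a single $m$ or $\Delta$ applied to the circles touched by the saddle, and $\Delta$ is injective while $m$ has kernel exactly spanned by $\bfx\otimes\bfx$, the edge map kills $\alpha$ precisely when the saddle is a merge of two $\bfx$-labeled circles. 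The signs in the cube are correctly noted to be irrelevant, both for the no-cancellation step (disjoint summands) and for checking vanishing of a single edge map (a sign is a unit). No gaps.
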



\begin{example}
Consider the chain elements $\alpha_1,\ldots,\alpha_5 \in \ckh(3_1)$ depicted in Figure~\ref{fig:tref-chains}; here we use light gray arcs to indicate 0-resolutions. The chains $\alpha_1$, $\alpha_3$, $\alpha_4$, and $\alpha_5$ are cycles, whereas $\alpha_2$ is not. Observe that $\alpha_3$ is the boundary of $\alpha_2$ (up to sign). On the other hand $\alpha_1$, $\alpha_4$, and $\alpha_5$ are not boundaries, hence define nonzero classes in $\kh(3_1)$. One can further show that $\alpha_4$ represents a 2-torsion class (at least over $\rcal=\zz$).
\end{example}

\smallskip
 
 

\subsection*{Gradings }
The chain complex carries a \textit{homological} grading $h$ and \textit{quantum} grading $q$, decomposing as $$\ckh(\dcal)=\bigoplus_{(h,q)\in \zz \times \zz} \ckh^{h,q}(\dcal),$$ where the differential shifts $h$ by $+1$ and preserves $q$. To define these gradings, first let $n_+$ and $n_-$ denote the number of positive and negative crossings in $\dcal$, respectively. A generator $\alpha$ of $\ckh(\dcal)$ can be viewed as an elementary tensor in $\boldsymbol{\oplus}_{v \in \{0,1\}^n} \, \acal_{\dcal(v)}$ or as a smoothing of $\dcal$ with components labeled by $\bfo$ or $\bfx$. Let $|\alpha|$ denote the number of $1$-resolutions in $\alpha$, and $v_+$ and $v_-$ denote the number of $\bfo$-labels and $\bfx$-labels in $\alpha$. Then $h$ and $q$ are defined on $\alpha$ by
\begin{align*}
	h(\alpha) &= |\alpha| - n_- \\
	q(\alpha) &= v_+(\alpha) - v_-(\alpha) + h(\alpha) + n_+ - n_- 
	\\
	&= \deg(\alpha) + h(\alpha) + n_+ + n_-,
\end{align*}
where $\deg(\alpha)$ is the grading on $\acal_{\dcal(v)}$ defined in Section~\ref{sec:tqft}.


\begin{example}
The generators $\alpha_1$, $\alpha_2$, $\alpha_3$, $\alpha_4$, and $\alpha_5$ in $\ckh(3_1)$ depicted in Figure~\ref{fig:tref-chains} have bigradings $(h,q)$ given by $(0,1)$, $(2,5)$, $(3,5)$, $(3,7)$, and $(3,9)$, respectively.
\end{example}


\begin{exercise}Calculate the Khovanov homology of:
\begin{enumerate}[label=\bfseries(\alph*),leftmargin=25pt,itemsep=-3pt]
\item the unknot, using your favorite diagram with one crossing;

\item the Hopf link, with your favorite choice of orientation; and

\item the right-handed trefoil, using the cube of resolutions in Figure~\ref{fig:tref-complex}.
\end{enumerate}
\end{exercise}


\subsection{Induced maps on Khovanov homology} \label{subsec:induced}

\smallskip

We now turn to the central tool in this discussion: the cobordism maps on Khovanov homology. These maps were first defined for link cobordisms in $\rr^3 \times I$ by Khovanov \cite{Khovanov} and proven to be invariant (up to sign) under isotopy rel boundary by Jacobsson \cite{jacobsson}; alternative proofs of invariance were given by Bar-Natan \cite{barnatan} and Khovanov \cite{khovanov:invariant}. The invariance under isotopy rel boundary in $S^3 \times I$ was established later by Morrison--Walker--Wedrich \cite{morrison-walker-wedrich}.

\smallskip

\begin{theorem}
Let $\Sigma \subset S^3 \times I$ be a link cobordism between links $L_0,L_1 \subset S^3$. There  is an induced map $\kh(\Sigma): \kh(L_0) \to \kh(L_1)$ that is bigraded of degree $(h,q)=(0,\chi(\Sigma))$,  well-defined up to sign, and invariant under isotopy of $\Sigma$ rel boundary.
\end{theorem}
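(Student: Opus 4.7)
The plan is to build $\kh(\Sigma)$ by decomposing $\Sigma$ into a ``movie'' of elementary events and composing chain maps assigned to each, then to establish invariance by checking the Carter--Saito list of movie moves. First, after a small perturbation of $\Sigma$ rel boundary, I may assume that $f: S^3 \times I \to I$ restricts to a Morse function on $\Sigma$ with distinct critical values and, after a further generic projection to $\R^2$, that outside finitely many times the link diagrams have only transverse double points. Choosing regular values between successive events produces a movie of diagrams related by (i) a birth, (ii) a saddle (merging or splitting), (iii) a death, (iv) a Reidemeister I/II/III move, or (v) a planar isotopy. To each elementary piece I assign a chain map, using the Frobenius algebra $\acal=\rcal\langle\bfo,\bfx\rangle$: a birth is the unit $\rcal \to \acal$, $1 \mapsto \bfo$; a death is the counit $\acal \to \rcal$, $\bfo \mapsto 0$, $\bfx \mapsto 1$; a saddle is built from $m$ or $\Delta$ applied in a small disk around the relevant crossing/arc (this is essentially the edge map of a cube of resolutions in which the saddle is the sole change); and the Reidemeister maps are the explicit chain homotopy equivalences between $\ckh$ of the two diagrams, computed by deformation-retracting the cube of resolutions (or equivalently given in Bar-Natan's tangle-local form).

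Composing these maps gives a chain-level map $\ckh(\dcal_0) \to \ckh(\dcal_1)$ which is the candidate for $\kh(\Sigma)$. For the bigrading, I would verify directly on each elementary piece that the induced map is homogeneous of degree $(0,\chi)$: the birth and death maps contribute $(0, +1)$ and $(0, -1)$ respectively; a saddle contributes $(0,-1)$; and the Reidemeister maps contribute $(0,0)$. Summing over the movie, the total quantum shift is $(-1)$ times the number of saddles plus the net contribution of births and deaths, which equals $\chi(\Sigma)$ since a Morse function on $\Sigma$ realizes $\chi$ as (births) $-$ (saddles) $+$ (deaths). The homological shift being $0$ is similar, using the normalization $h(\alpha)=|\alpha|-n_-$ together with the appropriate tracking of $n_+, n_-$ across Reidemeister moves.

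For invariance rel boundary in $\R^3 \times I$, I would invoke the Carter--Saito movie move theorem: any two movies of isotopic cobordisms in $\R^3 \times I$ are connected by a finite list of local ``movie moves'' (the fifteen or so Carter--Saito moves, plus interchange of distant events). Then I check, move-by-move, that the two resulting chain maps differ only by a chain homotopy (and a sign) — these checks are local and reduce to identities in the Frobenius algebra $\acal$, most of which follow from associativity, coassociativity, and the Frobenius relation. This is the approach of Jacobsson, Bar-Natan, and Khovanov. To extend invariance from $\R^3 \times I$ to $S^3 \times I$, I would separately verify invariance under the \emph{sweep-around move}, in which a strand is pushed past the point at infinity; following Morrison--Walker--Wedrich one may model this by a genus-one cobordism and check that the induced endomorphism agrees with the identity, which reduces to a computation analogous to Example~\ref{ex:closed-kh}.

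The main obstacle is the movie-move verification together with the sweep-around argument: the movie-move list is long and requires careful bookkeeping of signs (which is exactly why the invariance is only \emph{up to sign}), and the sweep-around move is not captured by any finite local diagrammatic move, so it demands a genuinely three-dimensional input beyond the Carter--Saito framework. The remaining pieces — existence of the movie decomposition, chain-level construction, and the degree computation — are essentially bookkeeping once the elementary maps are in hand.
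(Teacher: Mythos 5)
Your approach matches the outline the paper gives: decompose $\Sigma$ into a movie of elementary cobordisms via a Morse function, assign chain maps built from the Frobenius algebra $\acal$, check the Carter--Saito movie moves for invariance (up to sign) in $\rr^3 \times I$, and then handle the Morrison--Walker--Wedrich sweep-around move separately to promote invariance to $S^3 \times I$. The paper attributes the theorem to Jacobsson, Bar-Natan, Khovanov, and Morrison--Walker--Wedrich and only sketches the argument in \S\ref{subsec:induced} and \S\ref{sec:tqft}, so your blind account is essentially the cited argument. One stylistic difference: the paper's \S\ref{sec:tqft} emphasizes Bar-Natan's trick of verifying many movie moves not by direct computation but by showing the local complexes involved have only $\pm \id$ as automorphisms, which shortcuts much of the bookkeeping you describe as ``checking move-by-move.''

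Two inaccuracies are worth correcting. First, the death (counit) map $\epsilon: \acal \to \rcal$, $\bfx \mapsto 1$, $\bfo \mapsto 0$, has quantum degree $+1$, not $-1$: a death cobordism is a disk with $\chi = +1$, and your own Morse-theoretic tally $\chi(\Sigma) = \#\text{births} - \#\text{saddles} + \#\text{deaths}$ requires deaths to contribute positively. As you wrote it, the running total would be $\#\text{births} - \#\text{deaths} - \#\text{saddles}$, which disagrees with $\chi$ whenever there are deaths. (Compare the exercise in \S\ref{subsec:tqft}, where $\deg(\iota)=\deg(\epsilon)=+1$ and $\deg(m)=\deg(\Delta)=-1$.) Second, the sweep-around move is an isotopy of links, so the corresponding cobordism is a product cylinder, not a genus-one surface; the point of the Morrison--Walker--Wedrich argument is to show that this \emph{cylinder} induces $\pm\id$, and their argument does not reduce to the closed-surface computation of Example~\ref{ex:closed-kh}.
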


\smallskip

In this subsection, we will focus on the definition of the cobordism maps; we will say a few words about the proofs of invariance in \S\ref{sec:tqft}. For the remainder of this subsection, we suppose that the  link cobordism $\Sigma: L_0\to L_1$ is expressed as a movie of link diagrams $\dcal_0=\dcal_{t_0},\dcal_{t_1},\ldots,\dcal_{t_n}=\dcal_1$ as discussed in \S\ref{subsec:functoriality}.   Following the discussion there, cobordism chain maps are defined based on the maps assigned to births (i.e., local minima), saddles (either merging or splitting), deaths (i.e., local maxima), Reidemeister moves (either I, II, or III), and planar isotopy.


\medskip


\emph{Critical points and planar isotopy}

For planar isotopy, the induced chain map is the obvious identification obtained by applying the planar isotopy at the level of smoothings. For cobordisms corresponding to critical points, we first note that we may generically assume these critical points (in particular, saddles) occur away from crossings in the diagram. For saddles, we apply the merge and split maps just as defined above. For births and deaths, we introduce two more maps:
\begin{align*}
\raisebox{-.28cm}{\includegraphics[height=0.715cm]{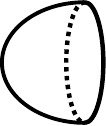}} \quad \qquad \qquad 
\iota: \rcal \to \acal \, ; \quad \ 
& \quad \iota(1) = \bfo \tag{birth}
\\
\\
\raisebox{-.28cm}{\includegraphics[height=0.715cm]{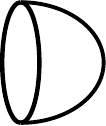}} \quad \qquad \qquad 
\epsilon: \acal \to \rcal \, ; \quad \ 
&\begin{cases} \epsilon(\bfo) = 0 
\\
\epsilon(\bfx) = 1
\end{cases}
\tag{death}
\end{align*}

\medskip


\noindent That is, a birth introduces a $\bfo$-labeled circle, and a death maps a $\bfo$-labeled circle to $0$ and an $\bfx$-labeled circle to $1$. For notational convenience, we borrow Bar-Natan's \emph{ornaments}, denoting births by $\kcup$, deaths by $\kcap$, and saddles by $\ksmooth$; see Figure~\ref{fig:can-cobs}. 

\begin{figure}[h!]\center
\bigskip
\raisebox{-.15in}{\def\svgwidth{.684\linewidth}
\begingroup%
  \makeatletter%
  \providecommand\color[2][]{%
    \errmessage{(Inkscape) Color is used for the text in Inkscape, but the package 'color.sty' is not loaded}%
    \renewcommand\color[2][]{}%
  }%
  \providecommand\transparent[1]{%
    \errmessage{(Inkscape) Transparency is used (non-zero) for the text in Inkscape, but the package 'transparent.sty' is not loaded}%
    \renewcommand\transparent[1]{}%
  }%
  \providecommand\rotatebox[2]{#2}%
  \newcommand*\fsize{\dimexpr\f@size pt\relax}%
  \newcommand*\lineheight[1]{\fontsize{\fsize}{#1\fsize}\selectfont}%
  \ifx\svgwidth\undefined%
    \setlength{\unitlength}{822.90469673bp}%
    \ifx\svgscale\undefined%
      \relax%
    \else%
      \setlength{\unitlength}{\unitlength * \real{\svgscale}}%
    \fi%
  \else%
    \setlength{\unitlength}{\svgwidth}%
  \fi%
  \global\let\svgwidth\undefined%
  \global\let\svgscale\undefined%
  \makeatother%
  \begin{picture}(1,0.38004671)%
    \lineheight{1}%
    \setlength\tabcolsep{0pt}%
    \put(0,0){\includegraphics[width=\unitlength,page=1]{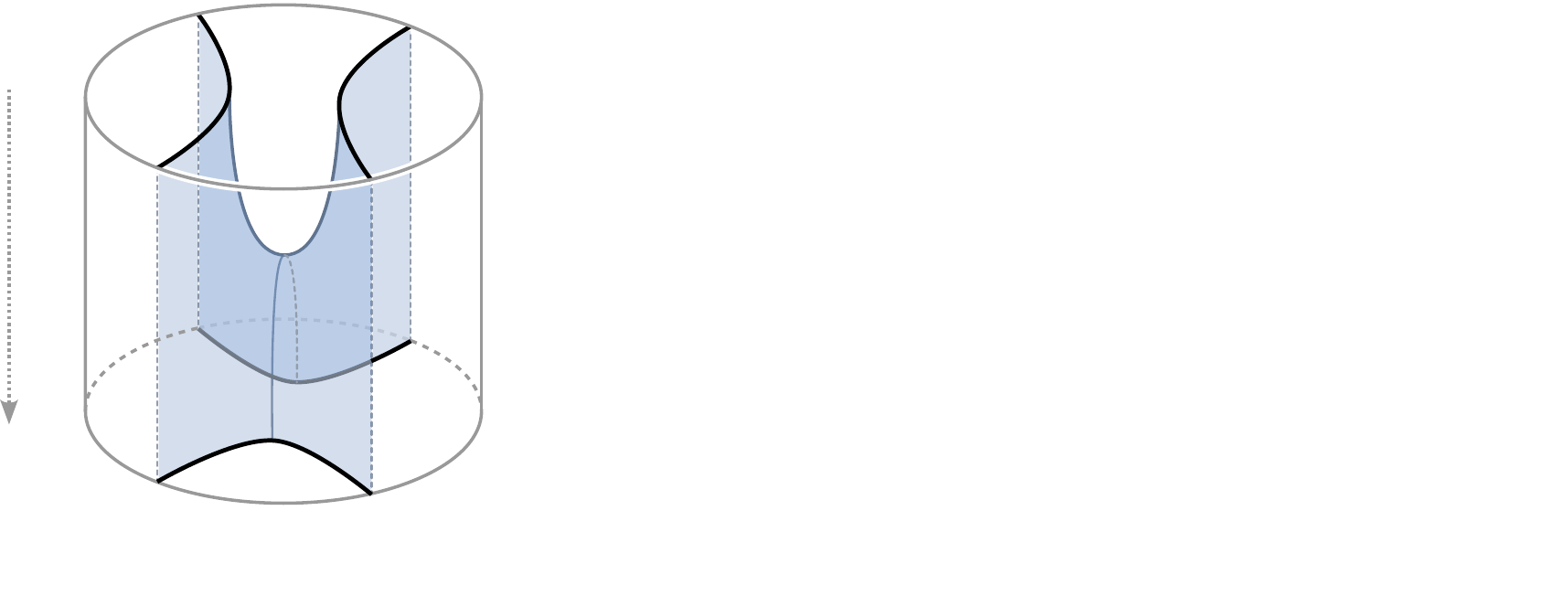}}%
    \put(0.18085751,0.00529278){\color[rgb]{0,0,0}\makebox(0,0)[t]{\smash{\begin{tabular}[t]{c}$\ksmooth$\end{tabular}}}}%
    \put(0,0){\includegraphics[width=\unitlength,page=2]{can-cob.pdf}}%
    \put(0.53481345,0.00529278){\color[rgb]{0,0,0}\makebox(0,0)[t]{\smash{\begin{tabular}[t]{c}$\kcup$\end{tabular}}}}%
    \put(0,0){\includegraphics[width=\unitlength,page=3]{can-cob.pdf}}%
    \put(0.87054124,0.00529278){\color[rgb]{0,0,0}\makebox(0,0)[t]{\smash{\begin{tabular}[t]{c}$\kcap$\end{tabular}}}}%
  \end{picture}%
\endgroup%
} \quad 
\captionsetup{width=.9\linewidth}

 \caption{Saddle, birth, and death cobordisms, along with  ornaments \raisebox{-1pt}{$\ksmooth$}, \raisebox{-1pt}{$\kcup$}, and \raisebox{-1pt}{$\kcap$}.  Here we follow Bar-Natan's convention of reading such diagrams top-to-bottom.}\label{fig:can-cobs}
\end{figure}


\begin{example}
For a standardly embedded torus, viewed as a cobordism $T^2 : \emptyset \to \emptyset$, we can calculate the induced map $\kh(T^2): \zz \to \zz$ as in Figure~\ref{fig:torus}.

\begin{remark}
By excluding the topmost step in Figure~\ref{fig:torus}, we see that a standard once-punctured torus, viewed as a cobordism from the empty set to the unknot, induces a map sending $1 \in \zz$ to a circle labeled $2\bfx$.
\end{remark}

\begin{figure}
\def\svgwidth{.649\linewidth}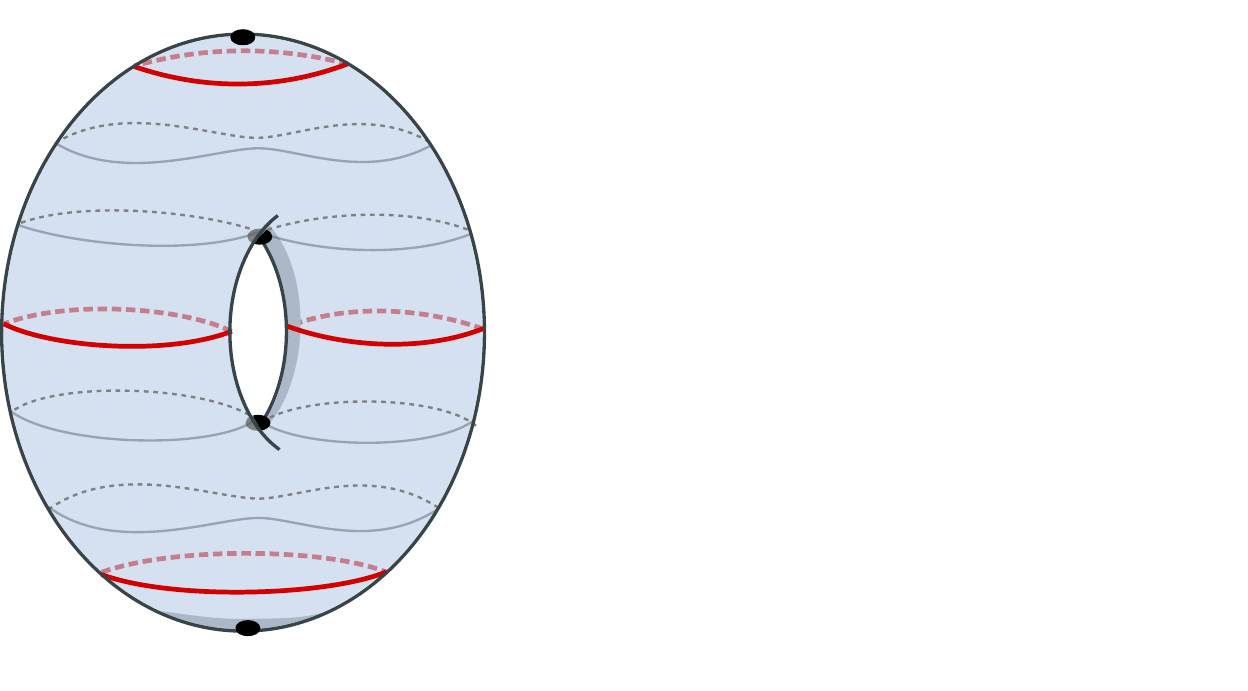
\caption{Calculating the map induced by a standardly embedded torus.}\label{fig:torus}
\end{figure}
\end{example}

\begin{wrapfigure}[8]{r}{.2\linewidth}
\vspace{-0.175in}
\hfill \includegraphics[width=.95\linewidth]{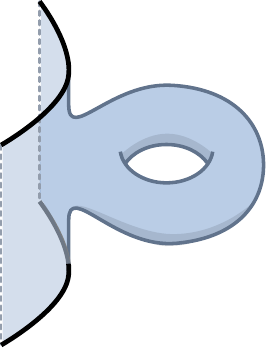}
\caption{$\blacksquare$}\label{fig:curtain}
\end{wrapfigure}

This brings us to the introduction of one additional ornament: We use a black square \raisebox{0pt}{\scalebox{0.8}{$\blacksquare$}} on a strand to denote the cobordism obtained by locally summing the product cobordism with a torus.

\begin{exercise}\label{exer:stab}
Compute the local effect of this cobordism. 
 In particular, assuming that the connected sum is performed near a point $p \in L$ that lies away from any crossing in the diagram, show that the chain level effect is to multiply the label on the component containing $p$ by $2 \bfx$. (That is, a $\bfo$-label is mapped to $2 \bfx$, and an $\bfx$-label is mapped to 0.) 
\end{exercise}

\medskip

\emph{Reidemeister moves}

To each Reidemeister move, one can associate a local cobordism that will help encode the chain map induced by the Reidemeister move; the precise role of these local cobordisms will be explained later in \S\ref{sec:tqft}. For now we will interpret these local cobordisms directly in terms of the basic cobordisms \raisebox{-0.5pt}{$\ksmooth$}, \raisebox{-0.5pt}{$\kcup$}, \raisebox{-0.5pt}{$\kcap$}, and $\ksquare$ described above. 
This framework is simplest for Reidemeister I and II moves, which will be the most essential moves for the examples we wish to consider.

Tables~\ref{table:R1} and \ref{table:R2} describe these induced maps, and are based on a combination of \cite[Tables 3-4]{hayden-sundberg} and \cite[Figures 5-6]{barnatan}. These tables are first organized by Reidemeister move (I versus II,  and increasing or decreasing crossing number). For each of these, we describe the possible local smoothings and the corresponding local cobordism to be applied; these cobordisms are directed from top to bottom, so that composition is downward. The final column of each table presents the cobordism in a shorthand notation from \cite{barnatan}. Here the use of the ornaments $\kcup$, $\kcap$, and \raisebox{0pt}{\scalebox{0.8}{$\blacksquare$}} is self-explanatory, and the shorthand diagrams include arcs to encode saddle moves $\ksmooth$. 
Note that, for shorthand diagrams involving multiple ornaments, there is either a unique order in which the constituent cobordisms can be composed, or changing the order does not affect the map.  

\medskip

\begin{example} Recall the link movie depicting the standard Seifert surface $\Sigma$ for the left-handed trefoil $-3_1$ in Figure~\ref{fig:tref-movie}, viewed as a link cobordism $\emptyset \to K$. We calculate the associated map $\zz=\kh(\emptyset) \to \kh(-3_1)$ in Figure~\ref{fig:tref-movie-Kh}: The first step is a birth (\raisebox{-0.5pt}{$\kcup$}), the next three steps are (negative) Reidemeister I moves (each realized as another \raisebox{-0.5pt}{$\kcup$}, per Table~\ref{table:R1}), and the final two steps are saddles (each merging two $\bfo$-labeled circles into one). 

We claim the element $\kh(\Sigma)(1) \in \kh(-3_1)$ is nonzero.\footnote{The class $\kh(\Sigma)(1)$ is called the \emph{Khovanov-Jacobsson} class of $\Sigma$ in \cite{sundberg-swann}.} To see this, note that in the smoothing that underlies $\kh(\Sigma)(1)$ in Figure~\ref{fig:tref-movie-Kh}, all three crossings are 1-resolved. The only incoming differentials are split maps, but the split map $\acal \to \acal \otimes \acal$ never hits the element $\bfo \otimes \bfo$.

\begin{figure}[t]
\center
\def\svgwidth{.85\linewidth}\input{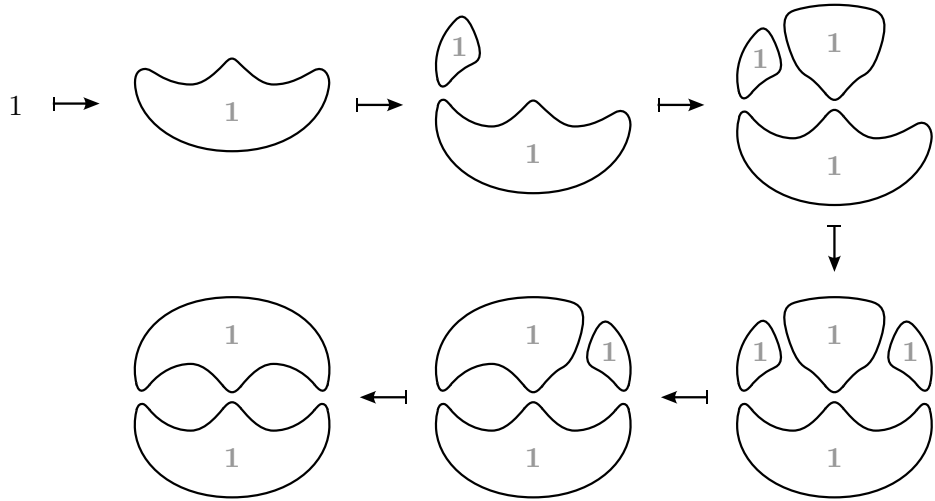}
\captionsetup{width=.85\linewidth}
\caption{Calculating the  map induced by the standard Seifert surface for $-3_1$ (viewed $\emptyset \to -3_1$).}\label{fig:tref-movie-Kh}
\end{figure}
\end{example}

\medskip

\begin{exercise}
Use Table~\ref{table:R1} to justify the following case of the Reidemeister I cobordism map, which is expressed at the level of labeled smoothings:


\begin{figure}[h!]
\def\svgwidth{\linewidth}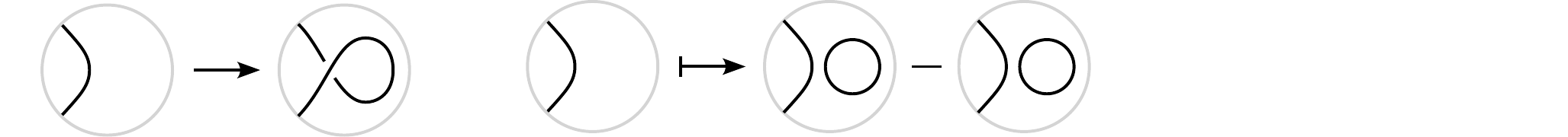
\end{figure}
\end{exercise}

\newpage



\begin{minipage}{1.1\linewidth}
\begin{table}[H]

\vspace{0.15in}

\hspace{-0.1\linewidth} \def\svgwidth{.95\linewidth}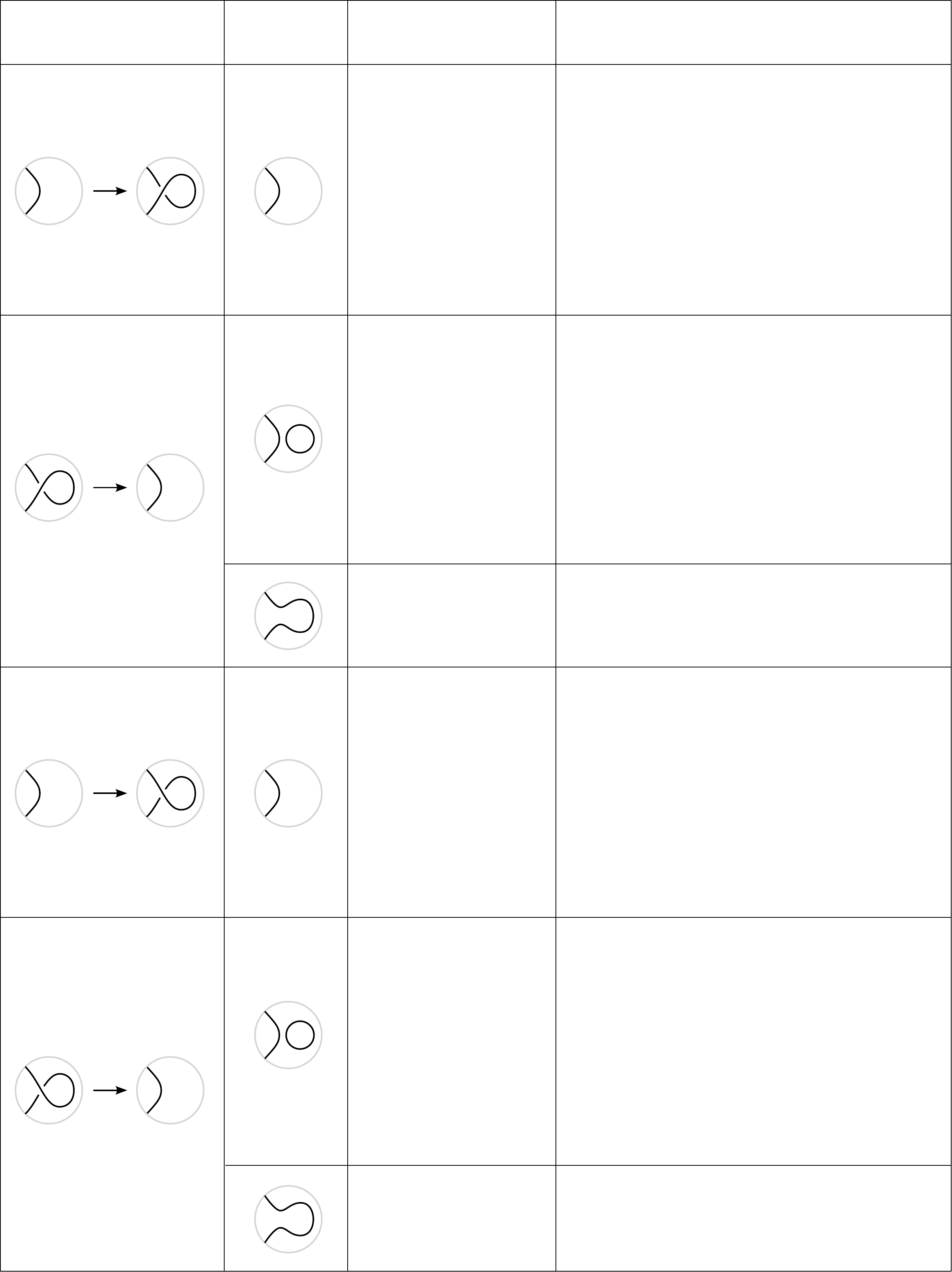

\smallskip

\caption{Chain maps induced by Reidemeister I moves.}\label{table:R1}
\end{table}
\end{minipage}

\begin{minipage}{1.05\linewidth}

\vspace{0.75in}

\begin{table}[H]\center

\hspace{-0.05\linewidth}\def\svgwidth{\linewidth}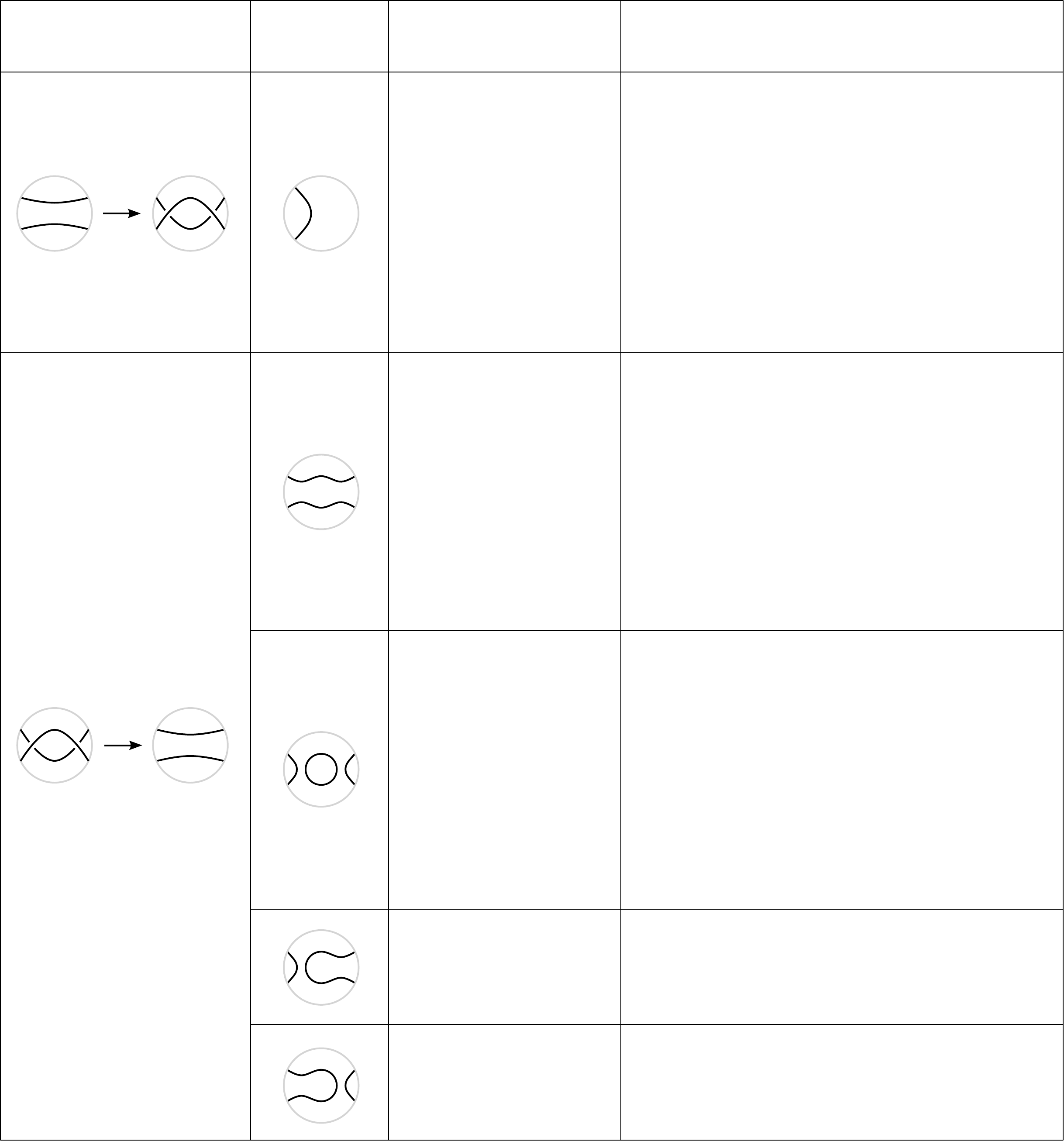

\smallskip

\caption{Chain maps induced by Reidemeister II moves.}\label{table:R2}
\end{table}
\bigskip
\end{minipage}

\newpage

\begin{exercise}
\begin{enumerate}[leftmargin=25pt,itemsep=-3pt,label=\bfseries(\alph*)]
\item Show that the standard Seifert surface for the right-handed trefoil $3_1$ induces the zero map when viewed as $\emptyset \to 3_1$. (Hint: Use the previous exercise to simplify your calculation.)


\vspace{-0.03in}

\item Show that it induces a nontrivial map when viewed as $3_1 \to \emptyset$.
\end{enumerate}
\end{exercise}


\begin{example}
We now distinguish the cobordism maps induced by the mirrors $-D$ and $-D'$ of the slice disks from Figure~\ref{fig:946}, viewed as cobordisms from the mirrored knot $-9_{46}$ to the empty set. Let $\phi \in \kh(-9_{46})$ denote the element represented by the chain element in the middle of Figure~\ref{fig:946-movie-other}. (The reader should verify that this chain element is indeed a cycle.) We claim that $\kh(D)(\phi)=1$ yet $\kh(-D')(\phi)=0$, which will imply that $-D$ and $-D'$ are not smoothly isotopic rel boundary.  

We begin with the easier calculation: $\kh(-D')(\phi)=0$. To see this, consider the first diagram in  Figure~\ref{fig:946-movie-other}, which shows a band representing the initial saddle move in a  movie diagram of the cobordism. At the chain level, this saddle induces a merge between two distinct $\bfx$-labeled circles, hence the cobordism map sends $\phi$ to 0.

On the other hand, consider Figure~\ref{fig:946-movie}. It   expresses $-D$ as a movie of link diagrams, together with the image of $\phi$ under the map induced by each successive step of the cobordism. It shows that $\kh(-D)(\phi)=1$. \hfill $\diamond$

\begin{figure}[b]
\center
\def\svgwidth{.825\linewidth}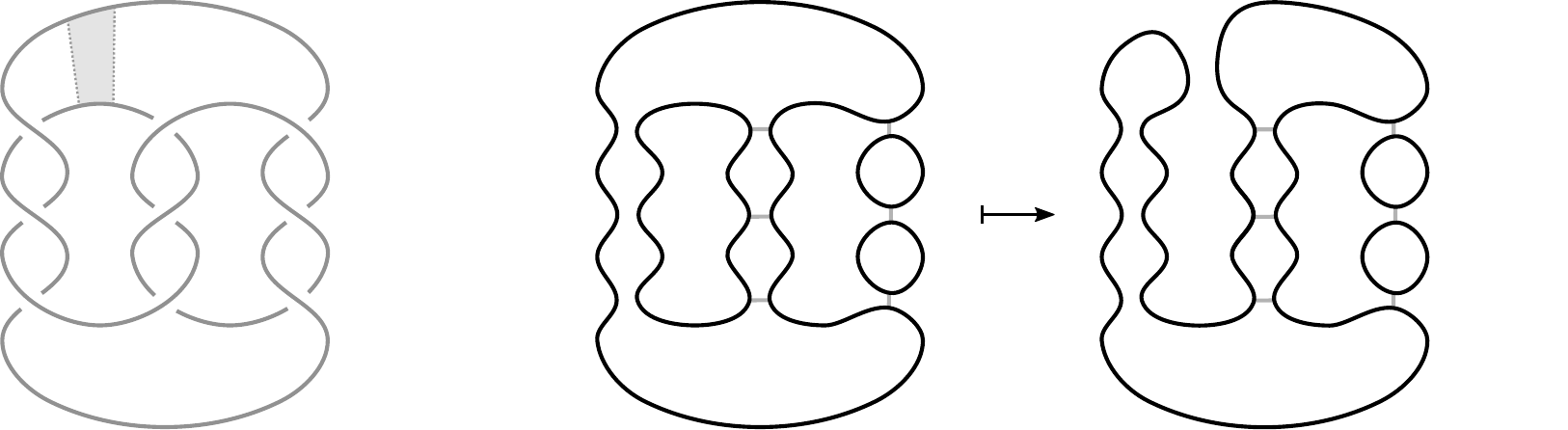
\caption{Calculating the image of $\phi \in \kh(-9_{46})$ under the map induced by $-D'$.}\label{fig:946-movie-other}
\end{figure}

\begin{figure}
\center
\def\svgwidth{.85\linewidth}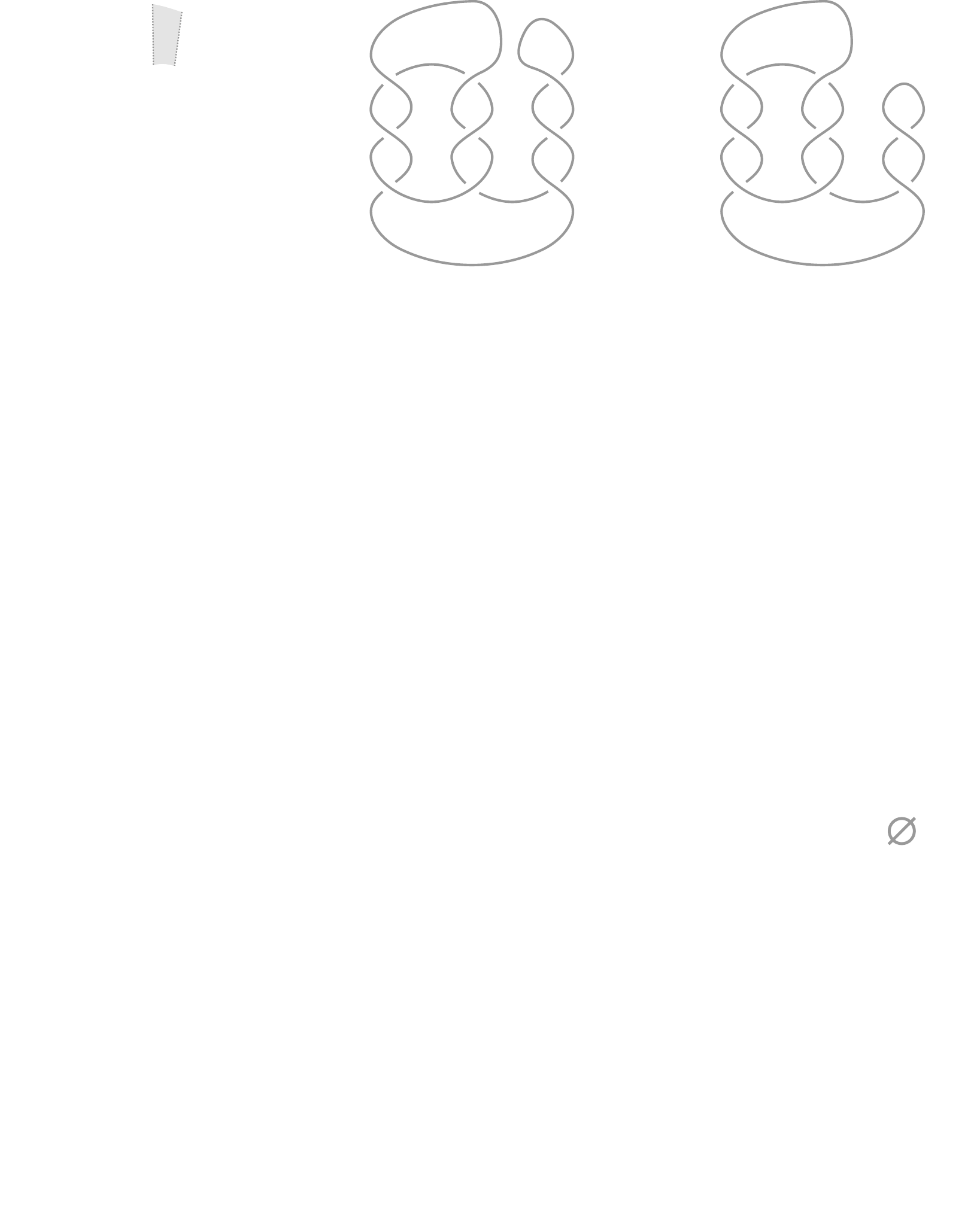

\medskip

\caption{Calculating the image of $\phi \in \kh(-9_{46})$ under the map induced by $-D$.}\label{fig:946-movie}

\end{figure}

\end{example}

Let us now walk through how one might independently arrive at the class $\phi$ used to distinguish the maps $\kh(-D)$ and $\kh(-D')$. These maps shift the bigrading by $$(h,q)=(0,\chi(-D))=(0,\chi(-D'))=(0,1),$$ and the target $\kh(\emptyset)$ is supported in bigrading $(0,0)$. Thus we must consider classes in $\kh^{0,-1}(-9_{46})$. A natural starting point in homological grading $h=0$ is the \emph{oriented resolution}; this is equivalent to choosing 0-resolutions at positive crossings and 1-resolutions at negative crossings. See the first step in Figure~\ref{fig:946-explore}. If our (perhaps overly optimistic) goal is to find an effective class that can be represented as a single labeled smoothing, then we must assign $\bfx$-labels to all circles joined by 0-resolution arcs. In the example shown in Figure~\ref{fig:946-explore}, it will then follow that the only way to achieve the desired quantum grading $q=-1$ is to assign $\bfo$-labels to the remaining two circles that are untouched by 0-resolution arcs.  This yields an element $\psi \in \ckh^{0,-1}(-9_{46})$ that is, by construction, a cycle. However, the class represented by $\psi$  cannot possibly distinguish the maps induced by $-D$ and $-D'$ because $\psi$ is symmetric with respect to the symmetry that relates $-D$ and $-D'$.

\begin{figure}
\center
\def\svgwidth{.85\linewidth}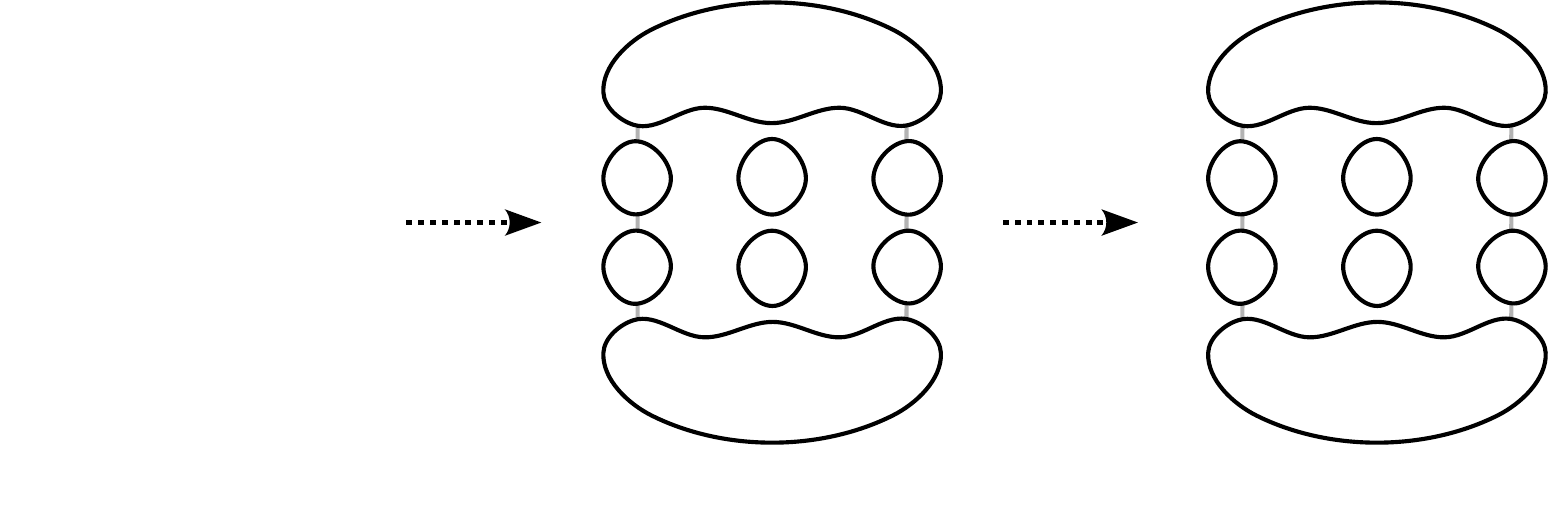
\caption{Producing the class $\psi \in \kh(-9_{46})$.}\label{fig:946-explore}
\end{figure}

\begin{exercise}
Show that the maps induced by $-D$ and $-D'$ both send $\psi$ to $\pm1$.
\end{exercise}

To find a labeled smoothing that behaves \emph{differently} under the two maps, we must change our underlying resolution. To preserve the homological grading, we must switch 0- and 1-resolutions in pairs. Here we are guided by a few topological insights, after considering the movie descriptions of the disks and the maps induced by Reidemeister moves and band moves. First, if  we want a class to map nontrivially under the cobordism map, it must satisfy the following:
\begin{itemize}

\item any crossing that will be eliminated by a Reidemeister I move is given its oriented resolution, and 

\item any pair of crossings that will be eliminated by a Reidemeister II move has both crossings given oriented resolutions or both crossings given disoriented resolutions.

\end{itemize}

In the movie for the disk $-D$, the three crossings on the righthand side of $-9_{46}$ are eliminated by Reidemeister I moves, so we preserve their (oriented) 0-resolutions. The remaining six crossings are canceled in pairs by Reidemeister II moves. If we flip 0- and 1-resolutions at all of these crossings, we obtain the smoothing that underlies the class $\phi$ from Figure~\ref{fig:946-movie-other}. (It is easy to check that if we flip the 0- and 1-resolutions on only one or two of these pairs of crossings, then the resulting smoothing will have 0-resolution arcs joining a circle of the resolution to itself, hence no individual generator supported by that smoothing will be a cycle.) In order to produce a chain element in the desired quantum grading $q=-1$, we have no choice but to assign an $\bfx$-label to each smoothing circle, yielding the class $\phi$.



\subsection{Additional applications and exercises}

\begin{exercise}
Consider the cobordisms $\Sigma$ and $\Sigma'$ between the 3-component unlink $L$ and the empty set depicted in Figure~\ref{fig:unlink}. Show that these induce distinct maps on Khovanov homology (in both directions $\emptyset \to L$ and $L \to \emptyset$).

\begin{figure}[b]\center
 \includegraphics[width=.75\linewidth]{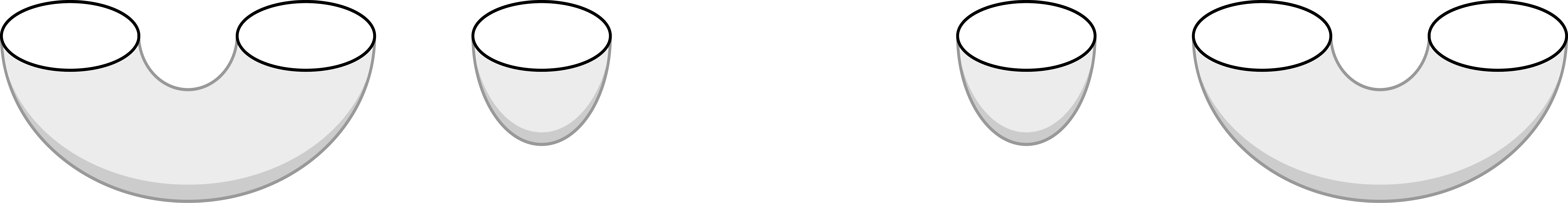}    
\caption{Surfaces $\Sigma$ (left) and $\Sigma'$ (right) bounded by the 3-component unlink.}\label{fig:unlink}
\end{figure}
\end{exercise}


\begin{exercise}
\textbf{(Plamenevskaya's invariant)} Let $L$ be an oriented link  expressed as the closure of an $n$-stranded braid $\beta \in B_n$. Plamenevskaya \cite{plamenevskaya:transverse-Kh}  defined an element $\psi(L) \in \kh(L)$ by considering the unique smoothing of $L$ that is itself braided, and assigning $\bfx$ to each of the $n$ components of the smoothing. (This has 0-resolutions at positive generators $\sigma_i$ and 1-resolutions at negative generators $\sigma_i^{-1}$, where $\sigma_i$ is a standard generator from Artin's presentation of the braid group $B_n$.) An example is shown in Figure~\ref{fig:plam} for the knot $10_{148}$, given as the closure of the braid 
$\beta = \sigma_2^{-2} \sigma_1 \sigma_2^{3}\sigma_1 \sigma_2^{-1} \sigma_1^2.$

 Plamenevskaya shows that $\pm \psi$ is functorial under braid isotopy and positive Markov (de)stabilization, i.e., $\beta  \in B_n \leftrightsquigarrow \beta \sigma_n \in B_{n+1}$. (This corresponds to the notion of  \emph{transverse isotopy} in contact topology; see \cite[\S2.4]{etnyre:knot-intro} for more background.)

\begin{figure}[tb]
\center
\def\svgwidth{.66\linewidth}
\begingroup%
  \makeatletter%
  \providecommand\color[2][]{%
    \errmessage{(Inkscape) Color is used for the text in Inkscape, but the package 'color.sty' is not loaded}%
    \renewcommand\color[2][]{}%
  }%
  \providecommand\transparent[1]{%
    \errmessage{(Inkscape) Transparency is used (non-zero) for the text in Inkscape, but the package 'transparent.sty' is not loaded}%
    \renewcommand\transparent[1]{}%
  }%
  \providecommand\rotatebox[2]{#2}%
  \newcommand*\fsize{\dimexpr\f@size pt\relax}%
  \newcommand*\lineheight[1]{\fontsize{\fsize}{#1\fsize}\selectfont}%
  \ifx\svgwidth\undefined%
    \setlength{\unitlength}{847.73587445bp}%
    \ifx\svgscale\undefined%
      \relax%
    \else%
      \setlength{\unitlength}{\unitlength * \real{\svgscale}}%
    \fi%
  \else%
    \setlength{\unitlength}{\svgwidth}%
  \fi%
  \global\let\svgwidth\undefined%
  \global\let\svgscale\undefined%
  \makeatother%
  \begin{picture}(1,0.73329572)%
    \lineheight{1}%
    \setlength\tabcolsep{0pt}%
    \put(0,0){\includegraphics[width=\unitlength,page=1]{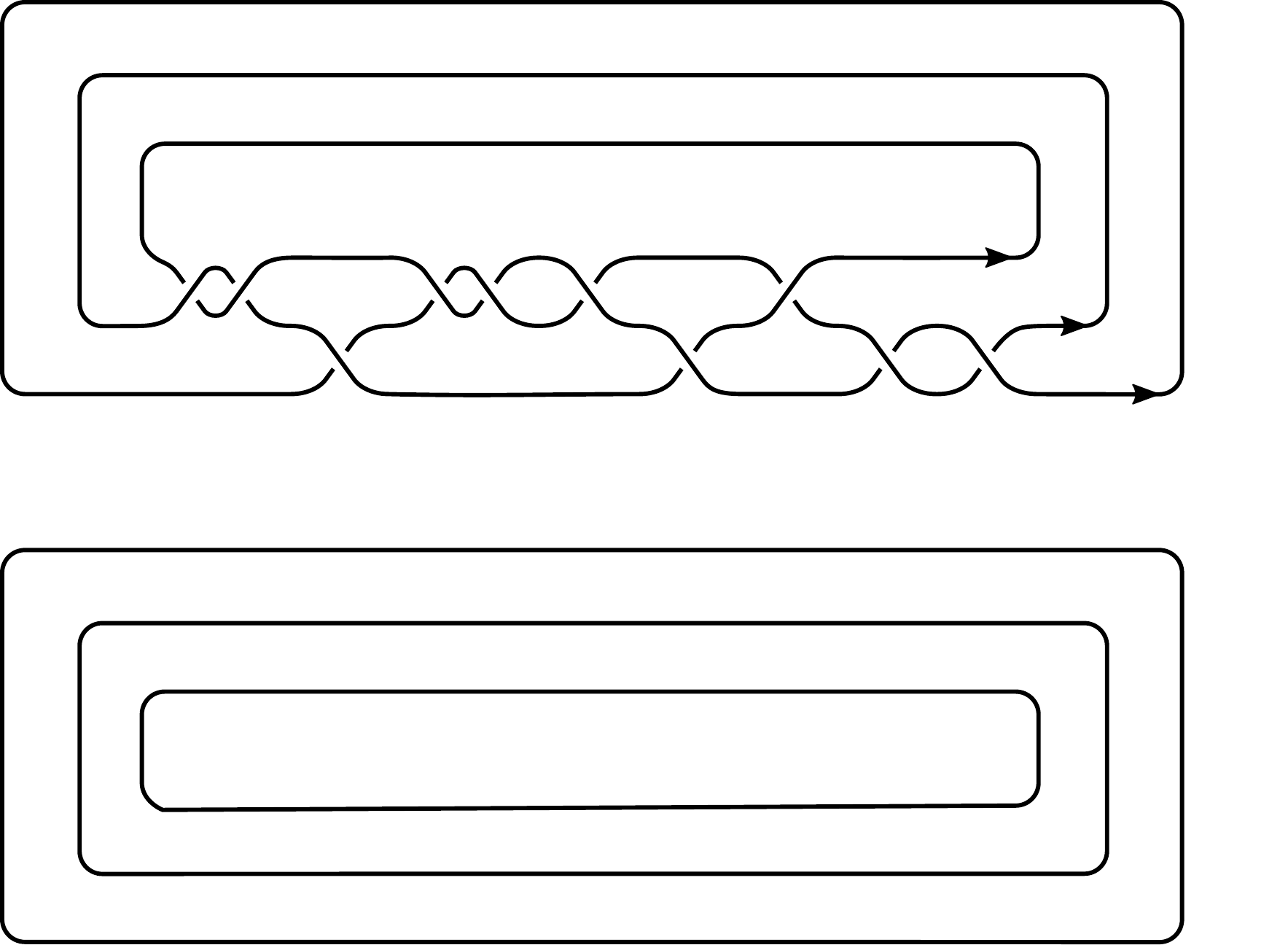}}%
    \put(0.76534332,0.12124091){\color[rgb]{0.50196078,0.50196078,0.50196078}\makebox(0,0)[lt]{\lineheight{1.25}\smash{\begin{tabular}[t]{l}{\footnotesize$\bfx$}\end{tabular}}}}%
    \put(0.82434486,0.06668982){\color[rgb]{0.50196078,0.50196078,0.50196078}\makebox(0,0)[lt]{\lineheight{1.25}\smash{\begin{tabular}[t]{l}{\footnotesize$\bfx$}\end{tabular}}}}%
    \put(0.88241688,0.01664307){\color[rgb]{0.50196078,0.50196078,0.50196078}\makebox(0,0)[lt]{\lineheight{1.25}\smash{\begin{tabular}[t]{l}{\footnotesize$\bfx$}\end{tabular}}}}%
  \end{picture}%
\endgroup%

\caption{Representing the knot $10_{148}$ as the closure of a quasipositive braid, along with the corresponding chain representative of Plamenevskaya's invariant.}\label{fig:plam}

\end{figure}

\vspace{-0.025in}

\begin{enumerate}[leftmargin=25pt, label=\bfseries(\alph*),itemsep=-3.5pt]
\item Show that $\psi$ always lies in bigrading $(h,q)=(0,w-n)$, where $w$ is the writhe of the braid and $n$ is the braid index.

\item Show that the chain element underlying $\psi$ is indeed a cycle.

\item Show that if $\beta$ is a positive braid, then $\psi(L) \neq 0$. 

\vspace{-0.05in}

\hfill \rotatebox{180}{\emph{Hint: Where does it lie in the cube of resolutions?}}

\medskip

\item A braid $\beta \in B_n$ is \emph{quasipositive} if it is a product of conjugates of the (positive) standard Artin generators, i.e., $\beta = \prod_j w_j \sigma_{i_j} w_j^{-1}$ for some words $w_j \in B_n$. For example, the braid above is quasipositive:
$$\beta = \sigma_2^{-2} \sigma_1 \sigma_2^{3}\sigma_1 \sigma_2^{-1} \sigma_1^2= (\sigma_2^{-2} \sigma_1 \sigma_2^{2})( \sigma_2 \sigma_1 \sigma_2^{-1}) \sigma_1^2.$$
Show that, if $\beta$ is quasipositive, then $L$ bounds a surface $\Sigma: L \to \emptyset$ such that  $\kh(\Sigma)(\psi(L)) \neq 0$. (Hence $\psi(L) \neq 0$, in particular.) 

\vspace{-.045in}

\hfill \rotatebox{180}{\emph{Hint: Start with saddles at the conjugated crossings $\sigma_{i}$ in $w \sigma_i w^{-1}$.}}

\end{enumerate}

\end{exercise}


\smallskip

\begin{exercise}
 Let $L$ be the $(3,-3)$-torus link but with one strand oriented oppositely from the other two.
 \vspace{-0.04in}
 
\begin{enumerate}[label=\bfseries(\alph*),leftmargin=23pt,itemsep=-3pt]
\item Show $L$ bounds  two different slice surfaces,  each comprised of a disk and annulus.
\item Show that these two slice surfaces induce distinct maps $\kh(L) \to \kh(\emptyset)$.

\end{enumerate}
\end{exercise}


\smallskip


\begin{exercise}\label{exer:livingston} \textbf{(Knotted Seifert surfaces)} In \cite{livingston}, Livingston asked whether any two Seifert surfaces of equal genus for a fixed knot are necessarily isotopic (rel $\partial$) through surfaces in $B^4$. This was resolved in \cite{hkmps:seifert}; let's walk through a counterexample.

Consider the right-handed trefoil $K$ as the closure of the positive braid $\sigma_1^3 \in B_2$. From this description, $K$ naturally bounds a genus-1 Seifert surface $\Sigma$ (built from 2 disks and 3 bands). 
 Figure~\ref{fig:cable-tref-braid} depicts the  (untwisted) 2-copy of $K$ (i.e., the union of $K$ and its Seifert pushoff) and the (untwisted) positive Whitehead double $\wh(K)$.

\begin{figure}[b]
\center
 \includegraphics[width=.55\linewidth]{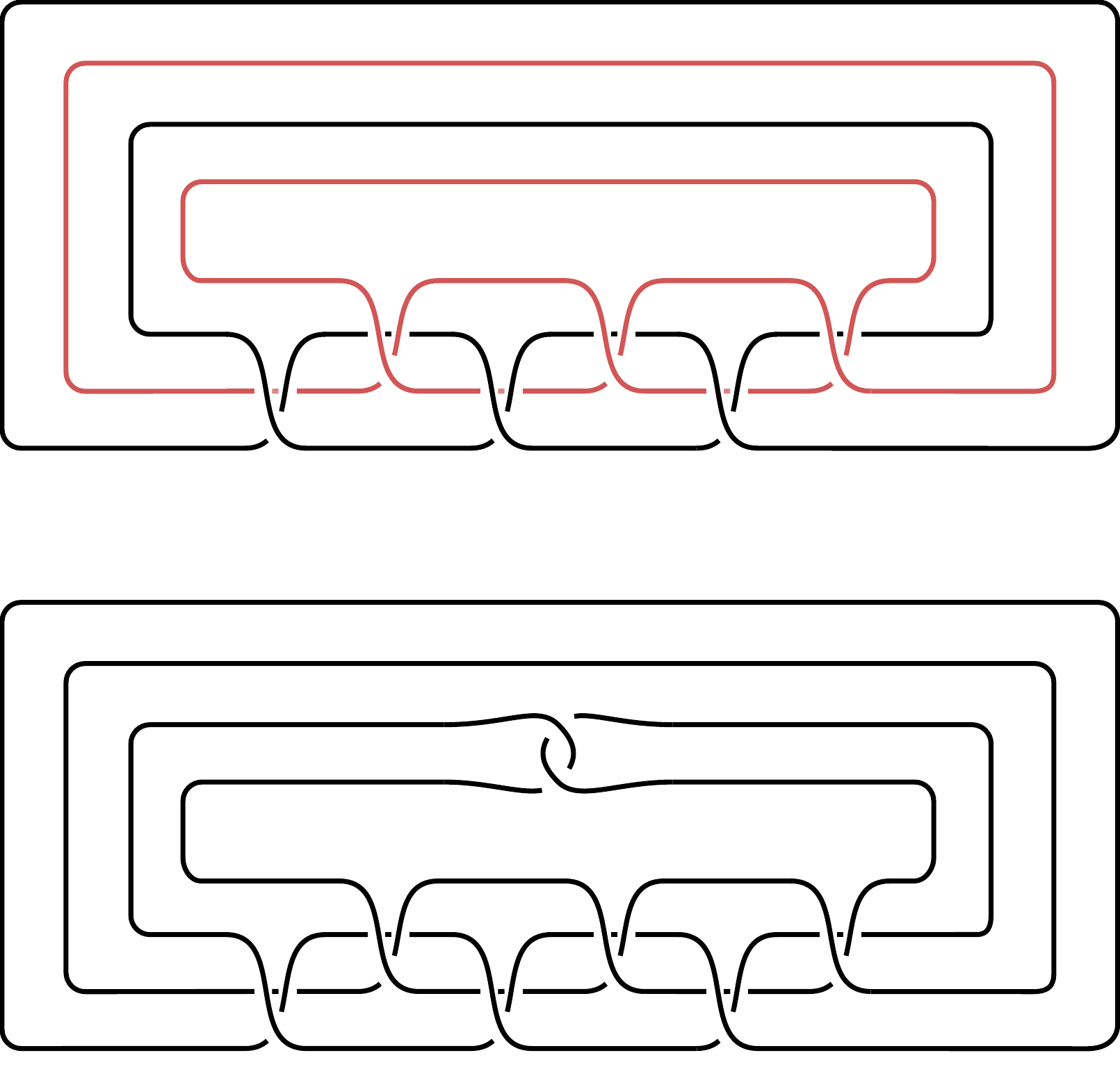}
\caption{The 2-copy of the right-handed trefoil (top) and the positive Whitehead double of the right-handed trefoil (bottom).} \label{fig:cable-tref-braid}
\end{figure}

\begin{enumerate}[label=\bfseries(\alph*),leftmargin=25pt,itemsep=-3pt]\item Convince yourself that the 2-copy of $K$ bounds a disconnected Seifert surface formed from two copies of the original Seifert surface $\Sigma$ for $K$. 

\item Observe that, if the 2-copy of $K$ is oriented as a braid, then it is a quasipositive braid, and it follows easily from the previous problem that the disconnected Seifert surface induces a nontrivial map on $\kh$ over $\zz_2$. Then show that this is true even if we reverse the orientation of one component of the 2-copy of $K$ (hence on one component of the disconnected Seifert surface).

\item Extend the above argument to show that $\wh(K)$ bounds a genus-2 Seifert surface --- one might call it $\wh(\Sigma)$ --- that induces a nontrivial map on $\kh$ over $\zz_2$. 

\item Find a genus-2 Seifert surface for $\wh(K)$ inducing the zero map on $\kh$ over $\zz_2$.

\vspace{-.1in}

\hfill \rotatebox{180}{\emph{Hint: Every Whitehead double has Seifert genus 1. Mind orientations!}}

\end{enumerate}
\end{exercise}


\begin{remark}
It follows that these two Seifert surfaces in $S^3$ are not smoothly isotopic rel $\partial$ in $B^4$. However, they \emph{are} topologically isotopic rel $\partial$. Indeed, Conway--Powell \cite{conway-powell} proved that any two Seifert surfaces of equal genus for a knot with trivial Alexander polynomial $\Delta(t)\equiv 1$ are topologically isotopic rel $\partial$.
\end{remark}

\bigskip

\section{Bar-Natan's theory and the TQFT approach}\label{sec:tqft}

\smallskip

\smallskip

We next consider Bar-Natan's deformation of Khovanov homology and use this as a segue to consider Bar-Natan's more topological perspective \cite{barnatan} (which beautifully underscores the way in which 2-dimensional cobordisms lie at the heart of Khovanov homology). This will enable us to  reboot from the TQFT perspective on Khovanov homology, including offering some insight into the proofs of invariance. 


\subsection{The Bar-Natan deformation}\label{subsec:bar-natan}


Khovanov's construction assigns a link homology theory to any appropriate choice of Frobenius algebra; this will be discussed further in \S\ref{subsec:tqft}. One such choice leads to the \emph{Bar-Natan deformation} of Khovanov homology over $\rcal=\ff_2[H]$, or simply \emph{Bar-Natan homology}. The only difference here is that we work over $\acal=\rcal[\bfx]/(\bfx^2 =H\bfx)$, with merge and split maps satisfying
\begin{align*}
m(\bfo \otimes \bfo) &= \bfo \hspace{100pt} \Delta(\bfo)=\bfo \otimes \bfx + \bfx \otimes \bfo \textcolor{red}{ \,+ \, \underline{H \cdot \bfo \otimes \bfo}}
\\
m(\bfo \otimes \bfx)&= m(\bfx \otimes \bfo)= \bfx  \hspace{45.75pt} \Delta(\bfx) = \bfx \otimes \bfx
\\
m(\bfx \otimes \bfx)&=\textcolor{red}{\underline{H\bfx}} 
\end{align*}
where the new terms are underlined and shown in red. 

Bar-Natan proves that this yields a functorial link invariant $\bn(\, \cdot \,)$ and that its cobordism maps are invariant up to isotopy rel boundary \cite{barnatan}. (Signs are irrelevant here because we chose the coefficient ring $\rcal = \ff_2[H]$; however, Bar-Natan also considers several related theories over rings where $2 \neq 0$, and he proves that their cobordism maps are well-defined up to sign.) The maps themselves are defined just as before, so the rules in Tables~\ref{table:R1}-\ref{table:R2} still apply.

\begin{wrapfigure}[11]{r}{.24\linewidth}
\vspace{-0.195in}
\center

\def\svgwidth{.825\linewidth}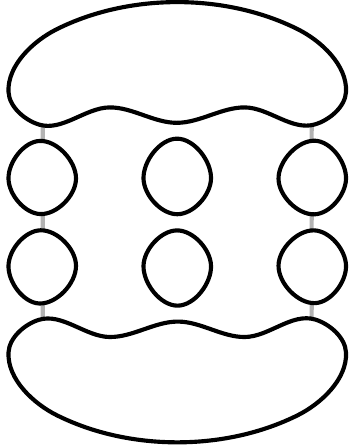
\captionsetup{width=.88\linewidth} \caption{A cycle $\theta$ in $\bn(-9_{46})$.}\label{fig:theta}
\end{wrapfigure}

Due to the additional terms in the differential, it is typically more difficult to explicitly identify cycles at the chain level in Bar-Natan homology. For example, while the labeled smoothings $\phi$ and $\psi$ from Figures~\ref{fig:946-movie-other} and \ref{fig:946-explore} are cycles when viewed in $\ckh(-9_{46})$, neither are cycles when viewed in $\cbn(-9_{46})$.  Although there is no simple way to circumvent this, a useful shift in perspective is to introduce the term $$\bfy=\bfx + H\! \cdot  \! \bfo.$$

\begin{example}
By replacing half of the $\bfx$-labels from $\psi$ with $\bfy$-labels, we obtain the element $\theta$ shown in Figure~\ref{fig:theta}.
\end{example}

\begin{remark}
The above perspective is closely related to the construction of Lee-type generators in Bar-Natan homology.
\end{remark}

\begin{exercise}\label{exercise:theta}
Show that the class $\theta \in \bn(-9_{46})$ is mapped to $1 \in \ff_2[H] =\bn(\emptyset)$ under the maps induced by both $-D$ and $-D'$.  
\end{exercise}

\begin{exercise}
Consider the two oriented copies of the unknot $U$ shown in Figure~\ref{fig:unknots}. 
\begin{enumerate}[label=\bfseries(\alph*),leftmargin=25pt,itemsep=-3pt]
\item Find a sequence of Reidemeister moves between them, i.e., an isotopy $U \rightsquigarrow U$ that reverses the orientation on $U$. 
\item Show that the induced map on Bar-Natan homology $\bn(U)$ over $\ff_2[H]$ swaps $\bfx$ with $\bfy$ and fixes $\bfo$.
\begin{figure}[h]
\center
\includegraphics[width=.275\linewidth]{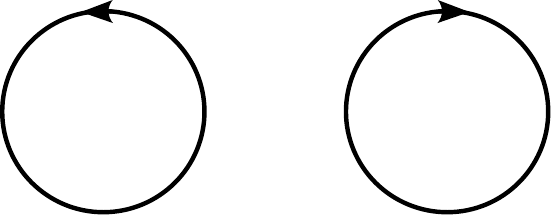}
\caption{The unknot, equipped with two different orientations.}\label{fig:unknots}
\end{figure}
\end{enumerate}
\end{exercise}

The next exercise considers the effect of local internal stabilization on cobordism maps in Bar-Natan homology.

\begin{exercise}
In contrast with Exercise~\ref{exer:stab}, show that $\bn(\Sigma \# T^2) = H \cdot \bn(\Sigma)$.
\end{exercise}

In fact, the same holds more broadly for arbitrary internal stabilizations (as shown in,  for example \cite[Corollary~2.3]{alishahi:unknotting} and \cite[Proposition~6.11]{lipshitz-sarkar:mixed}):

\begin{proposition}\label{prop:stab}
Given a link cobordism $\Sigma$ in $S^3  \times  I$, let $\Sigma'$ be obtained by internally stabilizing $\Sigma$ using an arc in $S^3  \times I$ whose endpoints lie on a single component of $\Sigma$ and whose interior is disjoint from $\Sigma$. The map induced by $\Sigma'$ satisfies $$\bn(\Sigma')=H \cdot \bn(\Sigma).$$
\end{proposition}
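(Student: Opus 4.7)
The plan is to reduce the computation of $\bn(\Sigma')$ to a local model in which the added handle is concentrated in a small $4$-ball, and then compute the resulting local cobordism map directly from the Bar-Natan Frobenius algebra.

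\emph{Reduction to the local model.} First, I would isotope $\Sigma'$ rel boundary so that the attached tube is supported in a small $4$-ball $B$ meeting $\Sigma$ only in a single disk $D$; equivalently, so that $\Sigma'$ is the local connect sum $\Sigma \,\#_p\, T^2$ at some interior point $p$ of the relevant component of $\Sigma$. This is where the hypothesis that both endpoints of the stabilizing arc $\gamma$ lie on the same component of $\Sigma$ is essential: pick an arc $\alpha \subset \Sigma$ joining them and, using a generic immersed disk bounded by $\gamma \cup \alpha$, perform a sequence of finger moves and ambient isotopies in $S^3 \times I$ to drag the tube along $\alpha$ and contract it into a small ball. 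At the level of movies, this corresponds to using Reidemeister moves to simplify the trajectory of the tube's cross-sectional circle into a small, isolated, unknotted circle.

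\emph{Local computation.} In the local model, choose a movie for $\Sigma'$ that agrees with a movie for $\Sigma$ except for a brief interval during which, inside $B$, a split saddle is immediately followed by a merge saddle; replacing this split-merge pair by a cylinder recovers $\Sigma$. By functoriality of $\bn$,
\[
\bn(\Sigma') \;=\; \bn(\Sigma'_{\mathrm{top}}) \circ \bn(C) \circ \bn(\Sigma'_{\mathrm{bot}}),
\]
where $C:\bigcirc\to\bigcirc$ is the local genus-$1$ cobordism given by the split-merge pair. Using the Bar-Natan merge and split rules I compute
\[
(m\!\circ\!\Delta)(\bfo) \;=\; m\bigl(\bfo\otimes\bfx+\bfx\otimes\bfo+H\,\bfo\otimes\bfo\bigr) \;=\; 2\bfx+H\bfo \;=\; H\bfo,
\]
\[
(m\!\circ\!\Delta)(\bfx) \;=\; m(\bfx\otimes\bfx) \;=\; H\bfx,
\]
so $\bn(C)$ is multiplication by $H$ on the copy of $\acal$ assigned to the circle where the stabilization occurs. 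Since $H$ lies in the ground ring, it pulls out of the composition to yield $\bn(\Sigma') = H \cdot \bn(\Sigma)$.

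\emph{Main obstacle.} The genuine work is in the first step: justifying that the stabilized surface, which a priori depends on a possibly knotted arc $\gamma$ in the complement of $\Sigma$, is isotopic rel boundary to the local connect sum $\Sigma\,\#_p\,T^2$. I expect to handle this via a standard four-dimensional argument combining the immersed disk bounded by $\gamma \cup \alpha$ with finger moves to absorb transverse intersections into isotopies of $\Sigma'$, but the argument uses the hypothesis that both endpoints of $\gamma$ lie on the same component in an essential way. Once this geometric reduction is in hand, the remainder of the proof is a short Frobenius algebra calculation together with functoriality of $\bn$.
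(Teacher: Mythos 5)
Your local Frobenius-algebra computation is correct: over $\ff_2[H]$ one has $(m\circ\Delta)(\bfo)=H\bfo$ and $(m\circ\Delta)(\bfx)=H\bfx$, so a once-punctured torus glued on at a point acts as multiplication by $H$. This settles the case of a local connected sum $\Sigma\#T^2$, which is exactly the exercise the paper places just before this proposition. The actual content of the proposition is the step you flag as the "main obstacle," and your sketch does not close it.

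The reduction you propose --- isotoping $\Sigma'$ rel boundary to a local connected sum --- is a genuinely nontrivial four-dimensional claim, and the mechanism you invoke is not sound. A generic immersed disk bounded by $\gamma\cup\alpha$ will meet $\Sigma$ in interior transverse points, and there is no Whitney move in a $4$-manifold to eliminate them; dragging the arc $\gamma$ through such a point is not an isotopy of $\Sigma'$ (it would pass the tube through $\Sigma$, creating a momentary self-intersection). Finger moves do not help either: their whole purpose is to \emph{change} a smooth isotopy class, not to realize an isotopy. The a priori obstruction lives in $\pi_1\bigl((S^3\times I)\setminus\Sigma\bigr)$ modulo the subgroup generated by endpoint slides, and nothing in your sketch argues this acts trivially on the stabilized surface.

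The approach taken in the references the paper cites (Alishahi, Cor.~2.3; Lipshitz--Sarkar, Prop.~6.11) bypasses the isotopy question entirely by applying a local relation in Bar-Natan's cobordism category rather than by finding a local model for the surface. Apply the neck-cutting relation at the circle $C=\{0\}\times\partial D^2$ in the middle of the tube. At the algebra level it reads
\[
\id_\acal(a)\;=\;\epsilon(\bfx\,a)\,\bfo\;+\;\epsilon(a)\,\bfx\;+\;H\,\epsilon(a)\,\bfo
\qquad\text{over }\ff_2[H],
\]
as one checks on $a=\bfo,\bfx$. Neck-cutting thus expresses $\bn(\Sigma')$ as a sum of three terms, each computed from the cut-and-capped surface $\Sigma''$: one with a dot near $D_+$, one with a dot near $D_-$, and $H$ times the undotted piece. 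The surface $\Sigma''$ is isotopic to $\Sigma$ rel boundary --- slide each cap disk across the corresponding half of the embedded $3$-ball $[-1,1]\times D^2$, whose interior misses $\Sigma$. Now the hypothesis that $\gamma$ has both endpoints on a single component of $\Sigma$ enters: $D_+$ and $D_-$ lie on the same component of $\Sigma''\cong\Sigma$, so a dot near $D_+$ and a dot near $D_-$ give literally the same morphism in the dotted cobordism category, and those two terms cancel over $\ff_2$. What remains is $\bn(\Sigma')=H\cdot\bn(\Sigma)$, with no control over how the tube wanders and no isotopy required. If you want to pursue your route instead, you would need to actually prove the isotopy claim --- which is plausible but far from immediate --- whereas the neck-cutting argument makes the proposition essentially formal.
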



An important feature of Bar-Natan homology is that, for any knot $K$, we have
$$\bn(K) \cong \ff_2[H]^{0,s-1} \oplus \ff_2[H]^{0,s+1} \oplus (\text{$H$--torsion})$$
where superscripts denote the bigrading $(h,q)$ of the generator $1 \in \ff_2[H]$ and $s=s(K)$ is the analog of Rasmussen's invariant in Bar-Natan homology (c.f., \cite{kwz:immersed}). More generally, for a link $L$ with $|L|$ components, there are $2^{|L|}$ towers, i.e., summands of the form $\ff_2[H]$.

 Moreover, given  knots $K$ and $K'$ in $S^3$, any connected cobordism $\Sigma: K \to K'$ induces a nonzero map 
$$\bn(K) / (\text{$H$--torsion}) \to \bn(K')/(\text{$H$--torsion}).$$
This follows from an argument similar to the one given by Rasmussen for Lee's deformation of Khovanov homology \cite[Corollary~4.2]{rasmussen:s} (c.f., \cite[\S3]{lipshitz-sarkar:mixed}).

\smallskip
\smallskip

\subsubsection*{Distinguishing surfaces via Bar-Natan homology} \ 

Considering the case of link cobordisms $\Sigma: K \to \emptyset$, the above properties imply severe constraints on the map $\bn(\Sigma):\bn(K) \to \ff_2[H]$. To be concrete, we consider the case where $\Sigma$ is a slice disk $D$, in which case $s(K)=0$ because $s$ is a concordance invariant. First, since $\ff_2[H]$ is torsion-free, the map is determined by its behavior on any elements in $\bn(K)$ that generate the towers $ \ff_2[H]^{0,\pm1}$ in $\bn(K)/(\text{$H$--torsion})$. Second, since $\chi(D)=1$ and the ring $\ff_2[H]=\bn(\emptyset)$ is generated in bigrading $(0,0)$, the map $\bn(D)$ kills any generator of the ``upper'' tower $ \ff_2[H]^{0,1}$ for grading-shift reasons. Therefore, for any element $\theta \in \bn(K)$ that generates the ``lower'' tower $ \ff_2[H]^{0,-1}$ in $\bn(K)/(\text{$H$--torsion})$, the map $\bn(D)$ must send $\theta$ to the generator $1 \in \ff_2[H]$. 

As a consequence,  we cannot directly distinguish pairs of slice disks via  their  maps on Bar-Natan homology over $\ff_2[H]$ when viewing them as link cobordisms $D,D': K \to \emptyset$. In theory, this issue might be avoided by working over a truncated coefficient ring, such as $\ff_2[H]/H^2$. Alternatively, one may instead return to the original perspective of considering link cobordisms $\emptyset \to K$. In that case, an argument similar to the one above shows that the \emph{difference element} $$\delta:=\bn(D)(1)-\bn(D') (1) \ \ \in \ \bn(K)$$
must be an $H$--torsion element of $\bn(K)$. However, this brings with it the previous difficulties of computational complexity.

In practice, applications of the cobordism maps on Bar-Natan homology in \cite{hayden:atomic,guth-hayden-kang-park} have instead applied  a hybrid approach that combines by-hand calculations in Khovanov homology with computer calculations of Bar-Natan homology. We illustrate this by sketching a proof of the following, based on an example from \cite[\S5.2]{hayden:atomic}:

\begin{figure}[t]

\center
\def\svgwidth{\linewidth}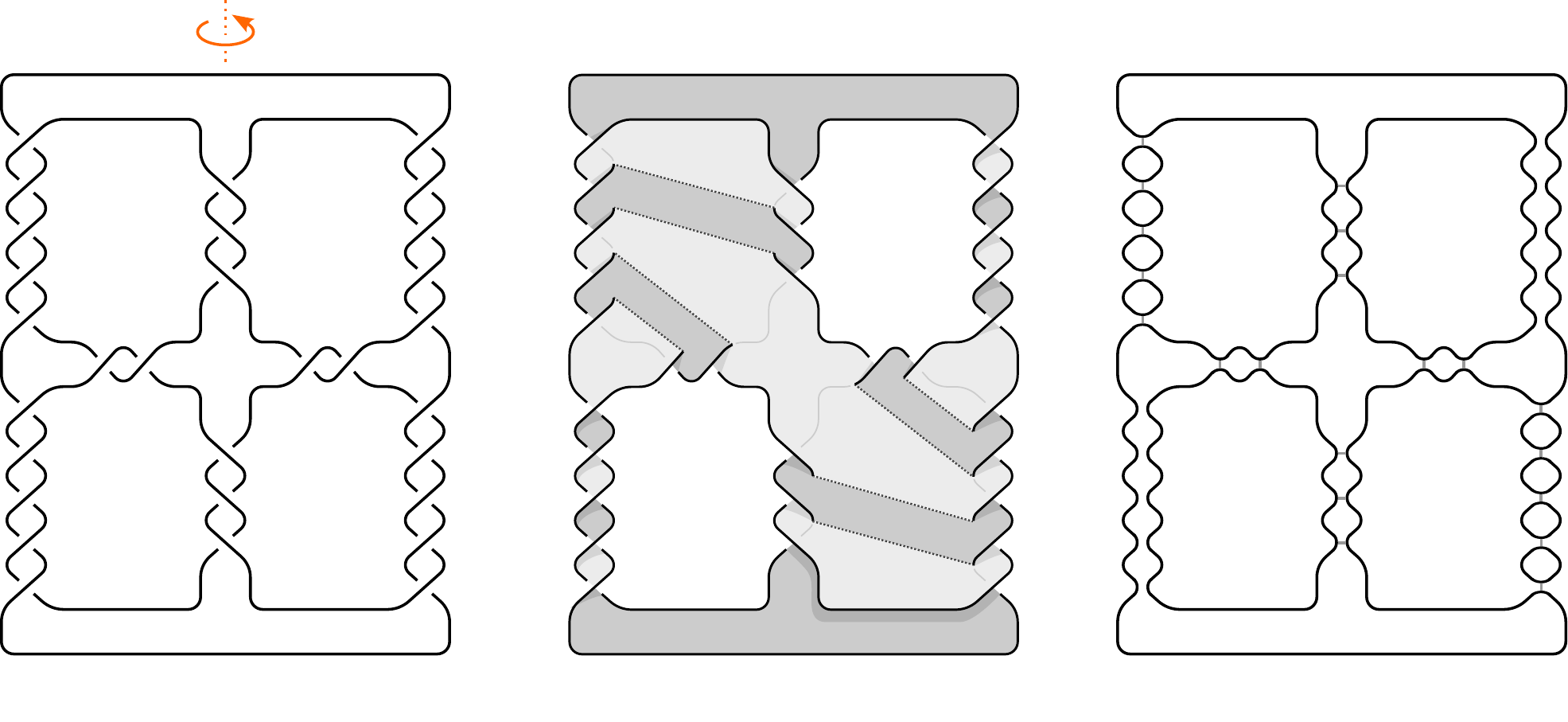

\caption{A strongly invertible knot $(K,\tau)$ bounding a slice disk $D$, along with class $\phi \in \kh(-K)$ that distinguishes $-D$ and $-\tau(D)$ up to isotopy rel boundary.}

\medskip

\label{fig:from946}
\end{figure}

\begin{proposition}
Let $(K,\tau)$ and $D$ denote the strongly invertible knot and its slice disk shown in Figure~\ref{fig:from946}, and let $D'=\tau(D)$. The slice disks $D,D': \emptyset \to K$ induce distinct maps $\bn(D) \neq \bn(D')$.  Moreover, $H \cdot \bn(D) \neq H \cdot \bn(D')$, so  $D$ and $D'$ are not isotopic rel boundary even after one internal stabilization.
\end{proposition}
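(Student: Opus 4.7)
My plan is to exploit the chain-level duality discussed in the introduction and work throughout with the time-reversed mirrors $-D, -D' : -K \to \emptyset$, so that the cobordism maps land in the torsion-free $\ff_2[H]$-module $\bn(\emptyset) = \ff_2[H]$. With this setup, producing any single class $\widetilde\phi \in \bn(-K)$ whose images $\bn(-D)(\widetilde\phi)$ and $\bn(-D')(\widetilde\phi)$ disagree in $\ff_2[H]$ yields both parts of the proposition at once: the first is immediate, and torsion-freeness of the target ensures the difference $\bn(-D)(\widetilde\phi)-\bn(-D')(\widetilde\phi)$ is not annihilated by $H$, so that $H\cdot\bn(-D)\neq H\cdot\bn(-D')$. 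Proposition~\ref{prop:stab} then translates this into the once-stabilized statement for $D$ and $D'$ themselves.

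First I would carry out the Khovanov-level warmup. Using the class $\phi \in \kh(-K)$ displayed in Figure~\ref{fig:from946}, I would verify that the indicated labeled smoothing is a $\ckh(-K)$ cycle: as in the $9_{46}$ case, because $\phi$ is supported on the oriented resolution of $-K$ and carries $\bfx$-labels on every circle meeting a $0$-resolution arc, every incoming differential is a split map and therefore cannot hit a pair of $\bfx$-labels, while every outgoing differential involves a merge of two $\bfx$-labels and so vanishes. Then I would track $\phi$ through explicit movie presentations of $-D$ and $-D'$ using Tables~\ref{table:R1}--\ref{table:R2} together with the merge/split formulas, aiming to verify $\kh(-D)(\phi) = 1$ and $\kh(-D')(\phi) = 0$ in $\kh(\emptyset) = \zz_2$, in close analogy with Figures~\ref{fig:946-movie} and \ref{fig:946-movie-other}.

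The main obstacle is then to promote this Khovanov-level distinction to the Bar-Natan level. The naive chain element underlying $\phi$ is generally not a $\cbn(-K)$ cycle, because the deformation terms $m(\bfx\otimes\bfx)=H\bfx$ and $\Delta(\bfo)=\bfo\otimes\bfx+\bfx\otimes\bfo+H\bfo\otimes\bfo$ produce new boundaries; worse still, Exercise~\ref{exercise:theta} warns that even when one produces a lift of $\phi$ by substituting $\bfx \rightsquigarrow \bfx+H\!\cdot\!\bfo$ on a subset of labels as in Figure~\ref{fig:theta}, the lifted class may have equal images under $\bn(-D)$ and $\bn(-D')$ despite a Khovanov-level distinction. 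Here I would follow the hybrid strategy of \cite{hayden:atomic} and appeal to \textsl{KnotJob} to compute $\bn(-K)$ over $\ff_2[H]$ and to produce an explicit cycle $\widetilde\phi \in \cbn(-K)$ in bigrading $(0,-1)$ which generates the ``lower'' $\ff_2[H]$-tower of $\bn(-K)/(H\text{-torsion})$ and reduces mod $H$ to $\phi$. The particular choice of $K$ in Figure~\ref{fig:from946} is precisely engineered so that such a $\widetilde\phi$ detects the difference between $-D$ and $-D'$.

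With $\widetilde\phi$ fixed, the final step is to replay the Khovanov-level movie computation in Bar-Natan homology, this time carefully tallying the extra $H$-weighted contributions from the deformed merge and split at each saddle. The anticipated outcome is $\bn(-D)(\widetilde\phi) = 1 + H\cdot p(H)$ and $\bn(-D')(\widetilde\phi) = H\cdot q(H)$ for some $p,q \in \ff_2[H]$, which are manifestly unequal in $\ff_2[H]$. By the opening paragraph this completes the proof of both halves of the proposition.
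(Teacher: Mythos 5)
Your opening reduction does not go through, and the paper itself flags exactly this obstacle in the discussion preceding the proposition. Because $\bn(\emptyset)=\ff_2[H]$ is $H$-torsion-free, any $\ff_2[H]$-module map $\bn(-K)\to\ff_2[H]$ annihilates all $H$-torsion in $\bn(-K)$ and hence factors through $\bn(-K)/(\text{$H$-torsion})\cong \ff_2[H]^{0,-1}\oplus\ff_2[H]^{0,1}$. The bigrading shift $(0,\chi(-D))=(0,1)$ forces the upper tower to die, and the Rasmussen-type non-vanishing result then forces the lower-tower generator to map to the unique nonzero element of $\ff_2[H]$ in bigrading $(0,0)$, namely $1$. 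This pins down $\bn(-D)$ and $\bn(-D')$ \emph{completely}: they are equal as maps $\bn(-K)\to\ff_2[H]$. In particular no $\widetilde\phi$ with $\bn(-D)(\widetilde\phi)\neq\bn(-D')(\widetilde\phi)$ exists; this is exactly what Exercise~\ref{exercise:theta} illustrates for $9_{46}$, and the "engineering" of $K$ in Figure~\ref{fig:from946} is not aimed at breaking this rigidity (nothing can). Note also that your predicted outputs $1+Hp(H)$ and $Hq(H)$ violate the grading constraint: since $\widetilde\phi$ lives in bigrading $(0,-1)$, its image must lie in the bigrading-$(0,0)$ part of $\ff_2[H]$, which is $\ff_2\cdot 1$.

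The fix is to return to the direction $D,D':\emptyset\to K$, where the target $\bn(K)$ does contain $H$-torsion and a difference can survive. Your Khovanov-level work correctly gives $\kh(-D)(\phi)\neq\kh(-D')(\phi)$; dualize to conclude $\delta_{\kh}:=\kh(D)(1)-\kh(D')(1)\neq 0$ in $\kh^{0,1}(K)$, then use the chain-level quotient $\pi:\cbn\to\ckh$ (setting $H=0$), which commutes with the cobordism maps, to see that $\delta_{\bn}:=\bn(D)(1)-\bn(D')(1)$ satisfies $\pi_*(\delta_{\bn})=\delta_{\kh}\neq 0$, hence $\delta_{\bn}\neq 0$. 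The final claim $H\cdot\delta_{\bn}\neq 0$ still needs machine input, but of a different kind than you propose: rather than tracking an explicit Bar-Natan cycle through a movie, one computes the Bar-Natan--Lee--Turner spectral sequence of $K$ with \textsl{KnotJob} (Table~\ref{table:946_SS}) and observes that all classes in bigrading $(0,1)$ survive to the $E_2$ page, which rules out $\delta_{\bn}$ being $H$-torsion of order $1$.
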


\begin{proof}[Proof sketch] For convenience, we will work with Khovanov homology over $\ff_2$ and Bar-Natan homology over $\ff_2[H]$.

We begin by considering the dual disks $-D,-D': -K \to \emptyset$. The maps $\kh(-D)$ and $\kh(-D')$ are distinguished by considering their behavior on the element $\phi \in \kh(-K)$ depicted in Figure~\ref{fig:from946}, which is mapped to $1$ by $\kh(-D)$ and to 0 by $\kh(-D')$. (The reader is encouraged to check this as an exercise, making sure to mirror the diagram of $K$.)

Dualizing, it easily follows that $\kh(D) \neq \kh(D')$. In particular, we conclude that the difference element
 $$\delta_{\kh}:= \kh(D)(1) - \kh(D')(1) \ \ \in \ \kh^{0,1}(K)$$
is nonzero.

Next, we lift this to Bar-Natan homology. To do so, we note that the Khovanov chain complex $\ckh$ can be obtained from the Bar-Natan chain complex $\cbn$ by setting $H=0$, i.e., by taking the quotient of $\cbn$ by $H \cdot \cbn$. This chain-level quotient map $\pi$ induces a map $\pi_*$ on homology.  Moreover, this construction is compatible with the cobordism maps, leading to the following commutative diagram:
$$
\begin{tikzcd}
\ff_2[H]=\bn(\emptyset) \arrow{r}{\bn(D)-\bn(D')}   \arrow{d}{\pi_*} &[1.75cm] \bn(K) \arrow{d}{\pi_*}\\[.125cm]%
\ff_2=\kh(\emptyset)  \arrow{r}{\kh(D)-\kh(D')}  & \kh(K) 
\end{tikzcd}
$$
Now consider the Bar-Natan difference element
$$\delta_{\bn}:= \bn(D)(1) - \bn(D')(1) \ \ \in \ \bn^{0,1}(K).$$
Commutativity of the diagram shows that $\pi_*(\delta_{\bn})=\delta_{\kh}$, so we have $\delta_{\bn}\neq 0$. It follows that $\bn(D) \neq \bn(D')$.

Finally, we wish to argue that $H \cdot \delta_{\bn}\neq 0$. To see this, one can use the program KnotJob \cite{knotjob} to compute the Bar-Natan--Lee--Turner spectral sequence (over $\ff_2$) and then determine the $\ff_2[H]$-module structure of $\bn(K)$. This spectral sequence collapses on its third page, and the relevant portions of the first two pages are shown in Table~\ref{table:946_SS}.  In particular, all elements in bigrading $(0,1)$ survive to the second page, which implies that $H \cdot \delta_{\bn} \neq 0$, as desired.\end{proof}

\setlength\extrarowheight{2pt}

\newcommand{\tabwidth}{0.05\textwidth}

\begin{table}[tb]\small
\vspace{0.45in}

\medskip
\centering
\setlength\extrarowheight{2pt}
\begin{tabular}{|>{\centering}m{.06\textwidth}||cc|>{\centering}m{\tabwidth}|>{\centering}m{\tabwidth}|>{\centering}m{\tabwidth}|>{\centering}m{\tabwidth}|>{\centering}m{\tabwidth}|>{\centering}m{\tabwidth}|c|}
\multicolumn{10}{c}{}\\
\multicolumn{10}{c}{Page 1} \\
\cline{1-10}  
\color{black}\backslashbox{\!$q$\!}{\!$h$\!} &  \ $\hdots$   &\hspace{-1.8pt} {\color{black}{\vrule}} \hspace{1pt} \color{black}\raisebox{-3pt}{$-7$} \hspace{1.25pt}  & \color{black}\raisebox{-3pt}{$-6$} & \color{black}\raisebox{-3pt}{$-5$} & \color{black}\raisebox{-3pt}{$-4$} & \color{black}\raisebox{-3pt}{$-3$} & \color{black}\raisebox{-3pt}{$-2$} & \color{black}\raisebox{-3pt}{$-1$} & \color{black}\raisebox{-3pt}{$0$} \\
\hhline{=||=========}
$1$     &   &   &   &   &   &   &   &   & \cellcolor{cellgray} \hspace{4.25pt}2\hspace{5.75pt}  \\
\hhline{-||~--------}
$-1$     &   &   &   &   &   &   &   &   & 2   \\
\hhline{-||~--------}
$-3$     &   &   &   &   &   &   &  2 & 1 &   \\
\hhline{-||~--------}
$-5$     &   &   &   &   &   & 4  &  2 &  1 &   \\
\hhline{-||~--------}
$-7$     &   &   &   &   &  9 & 5  &   &   &   \\
\hhline{-||~--------}
$-9$     &   &   &   & 17 & 10  & 1  &   &   &   \\
\hhline{-||~--------}
$-11$     &   &   &  21 &  17  &   1 &   &   &   &   \\
\hhline{-||~--------}
$-13$     &   & \ \ \  27   &   22 &   &   &   &   &   &   \\
\hhline{-||}
$\vdots$ & \ \ \reflectbox{$\ddots$} \\
\multicolumn{10}{c}{}\\
\end{tabular}

\medskip

\vspace{0.25in}

\begin{tabular}{|>{\centering}m{.06\textwidth}||cc|>{\centering}m{\tabwidth}|>{\centering}m{\tabwidth}|>{\centering}m{\tabwidth}|>{\centering}m{\tabwidth}|>{\centering}m{\tabwidth}|>{\centering}m{\tabwidth}|c|}
\multicolumn{10}{c}{}\\
\multicolumn{10}{c}{Page 2} \\
\cline{1-10}  
\color{black}\backslashbox{\!$q$\!}{\!$h$\!} &  \ $\hdots$   &\hspace{-1.8pt} {\color{black}{\vrule}} \hspace{1pt} \color{black}\raisebox{-3pt}{$-7$} \hspace{1.25pt}  & \color{black}\raisebox{-3pt}{$-6$} & \color{black}\raisebox{-3pt}{$-5$} & \color{black}\raisebox{-3pt}{$-4$} & \color{black}\raisebox{-3pt}{$-3$} & \color{black}\raisebox{-3pt}{$-2$} & \color{black}\raisebox{-3pt}{$-1$} & \color{black}\raisebox{-3pt}{$0$} \\
\hhline{=||=========}
$1$     &   &   &   &   &   &   &   &   & \cellcolor{cellgray} \hspace{4.25pt}2\hspace{5.75pt}  \\
\hhline{-||~--------}
$-1$     &   &   &   &   &   &   &   &   & 2   \\
\hhline{-||~--------}
$-3$     &   &   &   &   &   &   &   & 1 &   \\
\hhline{-||~--------}
$-5$     &   &   &   &   &   &   &   &  1 &   \\
\hhline{-||~--------}
$-7$     &   &   &   &   &   &   &   &   &   \\
\hhline{-||~--------}
$-9$     &   &   &   & 1 &    &   &   &   &   \\
\hhline{-||~--------}
$-11$     &   &   &   &1    &   &   &   &   &   \\
\hhline{-||~--------}
$-13$     &   &   &1  &   &   &   &   &   &   \\
\hhline{-||}
$\vdots$ & \ \ \reflectbox{$\ddots$} \\
\multicolumn{10}{c}{}\\
\end{tabular}

\medskip
\bigskip

\caption{The first two pages of the Bar-Natan--Lee--Turner spectral sequence for the knot $K$ from Figure~\ref{fig:from946}, shown for $h \geq -7$ and $q \geq -13$.}
\label{table:946_SS}

\vspace{-10pt}

\end{table}


%
%

\clearpage

\subsection{The TQFT approach}\label{subsec:tqft}

\subsubsection{The basic setup}

Let $\Cobt$ be the category whose objects are closed 1-manifolds in the plane and whose morphisms are orientable 2-dimensional cobordisms between such 1-manifolds, considered up to boundary-preserving homeomorphism; see Figure~\ref{fig:hopf-cobs} for an example.\footnote{Although we will depict these cobordisms in $\rr^2 \times I$, we do not treat them as embedded submanifolds. See \cite[\S11.3]{barnatan} for a related discussion.} Composition is given by concatenation, and the identity morphisms are product cobordisms.  These morphisms are generated by unions of elementary cobordisms of the form depicted in Figure~\ref{fig:cobs}.

\begin{figure}[h]\center
\def\svgwidth{.9\linewidth}
\begingroup%
  \makeatletter%
  \providecommand\color[2][]{%
    \errmessage{(Inkscape) Color is used for the text in Inkscape, but the package 'color.sty' is not loaded}%
    \renewcommand\color[2][]{}%
  }%
  \providecommand\transparent[1]{%
    \errmessage{(Inkscape) Transparency is used (non-zero) for the text in Inkscape, but the package 'transparent.sty' is not loaded}%
    \renewcommand\transparent[1]{}%
  }%
  \providecommand\rotatebox[2]{#2}%
  \newcommand*\fsize{\dimexpr\f@size pt\relax}%
  \newcommand*\lineheight[1]{\fontsize{\fsize}{#1\fsize}\selectfont}%
  \ifx\svgwidth\undefined%
    \setlength{\unitlength}{869.2163951bp}%
    \ifx\svgscale\undefined%
      \relax%
    \else%
      \setlength{\unitlength}{\unitlength * \real{\svgscale}}%
    \fi%
  \else%
    \setlength{\unitlength}{\svgwidth}%
  \fi%
  \global\let\svgwidth\undefined%
  \global\let\svgscale\undefined%
  \makeatother%
  \begin{picture}(1,0.33049187)%
    \lineheight{1}%
    \setlength\tabcolsep{0pt}%
    \put(0.82176695,0.00441947){\color[rgb]{0,0,0}\makebox(0,0)[t]{\smash{\begin{tabular}[t]{c}$\Sigma: \sigma \to \sigma'$\end{tabular}}}}%
    \put(0,0){\includegraphics[width=\unitlength,page=1]{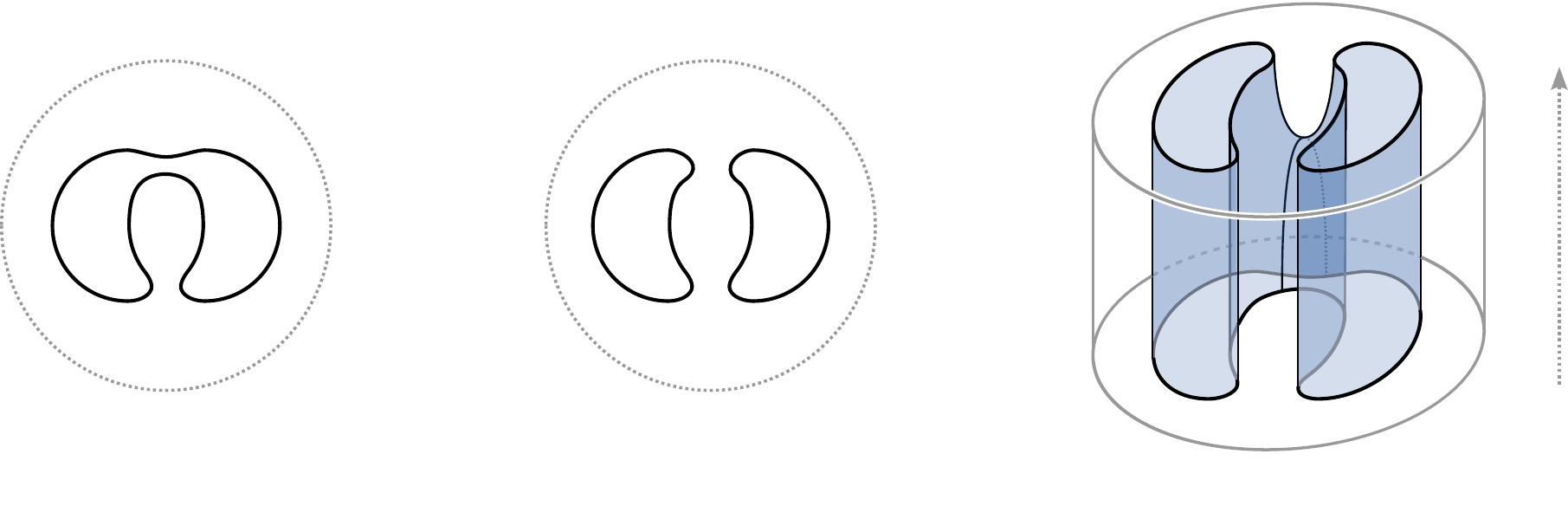}}%
    \put(0.10679404,0.03958851){\color[rgb]{0,0,0}\makebox(0,0)[t]{\smash{\begin{tabular}[t]{c}$\sigma$\end{tabular}}}}%
    \put(0.45597955,0.03958851){\color[rgb]{0,0,0}\makebox(0,0)[t]{\smash{\begin{tabular}[t]{c}$\sigma'$\end{tabular}}}}%
  \end{picture}%
\endgroup%

\caption{A pair of 1-manifolds $\sigma,\sigma'$ and a cobordism $\Sigma$ between them.}\label{fig:hopf-cobs}
\end{figure}

\begin{figure}[h]\center
\includegraphics[width=\linewidth]{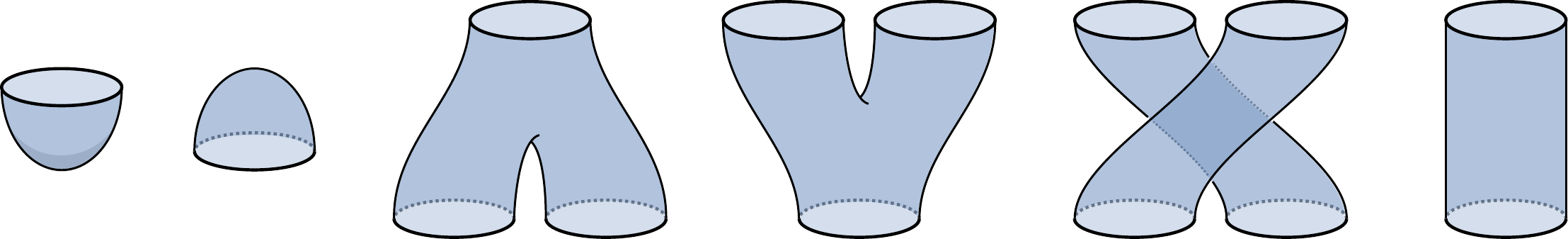}
\caption{Six elementary cobordisms that generate the morphisms in $\Cobt$.}\label{fig:cobs}
\end{figure}

\noindent 

\begin{definition}\label{def:tqft}
Define a functor $\fcal : \Cobt \to \Mod_\rcal$  as follows:

\begin{itemize}
\item Each circle is assigned a free $\rcal$-module $\acal=\rcal \langle \bfo,\bfx\rangle$.

\item Each closed 1-manifold $\sigma$ is assigned the tensor product $\acal_{\sigma}=\acal^{\otimes | \sigma |}$ of the copies of $\acal$ assigned to its component circles, where $|\sigma|$ denotes the number of components of $\sigma$. 

\item Given a cobordism $\Sigma: \sigma \to \sigma'$, we decompose it into concatenations of disjoint unions of the elementary cobordisms discussed above, then apply appropriate tensor products of the following maps.
\end{itemize}
\end{definition}
\begin{align*}
\raisebox{-.28cm}{\includegraphics[height=0.715cm]{small-birth.pdf}} \quad \qquad \qquad 
\iota: \rcal \to \acal \, ; \quad \ 
& \quad \iota(1) = \bfo \tag{birth}
\\
\\
\raisebox{-.28cm}{\includegraphics[height=0.715cm]{small-death.pdf}} \quad \qquad \qquad 
\epsilon: \acal \to \rcal \, ; \quad \ 
&\begin{cases} \epsilon(\bfo) = 0 
\\
\epsilon(\bfx) = 1
\end{cases}
\tag{death}
\\
\\
\raisebox{-.51cm}{\includegraphics[height=1.25cm]{small-merge.pdf}} \qquad 
m: \acal \otimes \acal \to \acal \,; \quad \ 
&\begin{cases} m(\bfo \otimes \bfo) = \bfo  
\\
m(\bfo \otimes \bfx) = \bfx
\\
m(\bfx \otimes \bfo)=\bfx
\\
m(\bfx \otimes \bfx)= 0
\end{cases}
\tag{merge}
\\
\\
\raisebox{-.51cm}{\includegraphics[height=1.25cm]{small-split.pdf}} \qquad 
\Delta: \acal \to \acal \otimes \acal \, ; \quad \ 
&\begin{cases} \Delta(\bfo) = \bfo \otimes \bfx + \bfx \otimes \bfo 
\\
\Delta(\bfx) = \bfx \otimes \bfx
\end{cases}
\tag{split}
\\
\\
\raisebox{-.51cm}{\includegraphics[height=1.05cm]{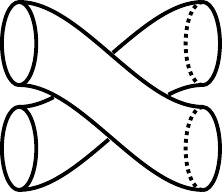}} \ \ \quad 
\rho : \acal \otimes \acal \to \acal \otimes \acal \, & ; \quad 
 \rho(\mathbf{a} \otimes \mathbf{a'})= \mathbf{a'} \otimes \mathbf{a} \tag{permutation}
 \\
\\
\raisebox{-.28cm}{\includegraphics[height=0.65cm]{small-id.pdf}}  \qquad \quad \ \, 
\id: \acal \to \acal \, ; \quad \ 
& \ \id(\mathbf{a})=\mathbf{a}
\tag{identity}
\end{align*}

\bigskip

\noindent To show that $\fcal$ is well-defined, one must verify that topologically equivalent cobordisms with different decompositions induce the same map. 


\begin{example} For the cobordisms in Figure~\ref{fig:birth-death}, we must show $(\id \otimes \, \epsilon) \circ \Delta=\id$:
\begin{align*}
(\id \otimes \, \epsilon) \circ \Delta (\bfo) &= (\id \otimes \, \epsilon)( \bfo \otimes \bfx + \bfx \otimes \bfo) 
\\
 &=(\id \otimes \, \epsilon)( \bfo \otimes \bfx) + (\id \otimes \, \epsilon)(\bfx \otimes \bfo)
\\
 &= \bfo \otimes \epsilon(\bfx) + \bfx \otimes \epsilon(\bfo)
 \\
 & = \bfo \otimes 1 + \bfx \otimes 0
 \\
 &= \bfo
\end{align*}
and
\begin{align*}
(\id \otimes \, \epsilon) \circ \Delta (\bfx) &= (\id \otimes \, \epsilon)( \bfx \otimes \bfx) 
\\
 &= \bfx \otimes \epsilon(\bfx)
 \\
 & = \bfx \otimes 1 
 \\
 &= \bfx
\end{align*}
\end{example}

\begin{exercise}
Show that $\fcal(\sphere)$ is the zero map and $\fcal(\torus)$ is $\times 2$.
\end{exercise}

\begin{exercise}
Show that the six elementary cobordisms in Figure~\ref{fig:cobs} generate $\Cobt$ (under disjoint unions and composition).
\end{exercise}

\smallskip

\smallskip \paragraph{\textbf{Frobenius algebra and gradings}} The operations defined above (namely the \emph{unit map} $\iota$, \emph{multiplication} $m$, \emph{comultiplication} $\Delta$, and \emph{counit map} $\epsilon$) endow $\acal$ with the structure of a Frobenius algebra. When viewing $\acal$ as an algebra, we may write multiplication $m(\mathbf{a} \otimes \mathbf{a'})$ as $\mathbf{a} \cdot \mathbf{a'}$. In particular, we have $\bfx^2=\bfx \cdot \bfx = 0$. As an intermediate perspective, one can view $\acal$ as the ring $\rcal[\bfx]/(\bfx^2)$, after setting $\bfo=1$.

\begin{remark}
These structures make $\acal$ into a \emph{Frobenius algebra}. In particular,  one needs $(\acal,m,\iota)$ to be an associative algebra over $\rcal$, $(\acal,\Delta,\epsilon)$ to be a coassociative coalgebra over $\rcal$, and the following diagrams commute:
\begin{equation}\label{eqn:frobenius}
\begin{tikzcd}
\acal \otimes \acal \arrow[r, "\Delta\otimes \id"] \arrow[d, "m"] & \acal \otimes \acal \otimes \acal \arrow[d, "\id \otimes m"] \\
\acal \arrow[r,"\Delta"]                          &  \acal \otimes \acal                                      
\end{tikzcd}
\qquad \qquad
\begin{tikzcd}
\acal \otimes \acal \arrow[r, "\id \otimes \Delta"] \arrow[d, "m"] & \acal \otimes \acal \otimes \acal \arrow[d, "m \otimes \id"] \\
\acal \arrow[r,"\Delta"]                          &  \acal \otimes \acal                                      
\end{tikzcd}
\end{equation}
\end{remark}

\begin{exercise}
Interpret the diagrams from \eqref{eqn:frobenius} in terms of $\Cobt$ and $\fcal$.
\end{exercise}

\begin{exercise}
Show that if $M$ is a closed, oriented manifold, then Poincar\'e duality endows $H^*(M;\zz)$ with a Frobenius algebra structure (over $\zz$). 
As an example, check that the Frobenius algebra $\acal$ defined above corresponds to $H^*(S^2;\zz)$.\footnote{We will assign gradings on $\acal$ by shifting the gradings on $H^*(S^2;\zz)$ down by $1$.}
\end{exercise}




We can equip $\acal$ with gradings given by setting $\deg(\bfo)=1$ and $\deg(\bfx)=-1$. 
Tensor products $\acal^{\otimes n}$ inherit gradings given by
$$\deg(\mathbf{a}_{\boldsymbol{1}} \otimes \cdots \otimes \mathbf{a}_{\boldsymbol{n}})=\deg(\mathbf{a}_{\boldsymbol{1}})+\cdots+\deg(\mathbf{a}_{\boldsymbol{n}}).$$

\begin{exercise}
\begin{enumerate} [label=\bfseries(\alph*)]
\item 
Show that the maps $\iota$, $m$, $\Delta$, and $\epsilon$ are graded maps of degree
$$\deg(\iota)=1, \qquad \deg(m)=-1, \qquad \deg(\Delta)=-1, \qquad \deg(\epsilon)=1.$$
\item Show that the map $\fcal(\Sigma)$ induced by a cobordism $\Sigma$ in $\Cobt$ is a graded map of degree $\chi(\Sigma)$.
\end{enumerate}
\end{exercise}



\medskip
\subsubsection{Bar-Natan's frame}

While the category $\Cobt$ is mostly sufficient for the informal goals of these notes, we digress here to sketch the richer category within which Bar-Natan works \cite{barnatan}. We strongly suggest reading Bar-Natan's paper itself, the beauty of which is mostly lost in the summary below.




\smallskip
\smallskip

\emph{Tangles}

The initial modification enables one to work with tangles instead of links, providing a rigorous way to work locally: For any finite collection of distinct points $B$ in $\partial D^2$, let $\Cobt(B)$ denote the category whose objects are closed 1-manifolds in $D^2$ with boundary $B$ and whose morphisms are 2-dimensional cobordisms in $D^2\times I$ between such 1-manifolds. (We view the previously-considered category as $\Cobt(\emptyset)$.) When clear from context, we will drop $B$ from the notation and simply write $\Cobt$.

\smallskip
\smallskip

\emph{Formal complexes}

From here, Bar-Natan's strategy is to emulate the construction of Khovanov homology while postponing the application of the functor $\fcal$, preserving the role of smoothings and cobordisms as long as possible. To do so, Bar-Natan constructs a category that has enough structure to support certain ``formal complexes'' that underlie Khovanov homology. If one squints, this looks like the Khovanov chain complex but without $\bfo$- and $\bfx$-labels on smoothings, and where the differentials are formal linear combinations of saddle cobordisms.

Technically speaking, the first step is to enlarge $\Cobt$ to allow formal $\zz$-linear combinations of the original morphisms between any fixed pair of objects, with composition extended in the natural bilinear way. The second step is to pass to the matrix category $\mathbf{Mat}(\Cobt)$ whose objects are formal direct sums of objects from $\Cobt$ and whose morphisms are matrices of morphisms between those formal direct sums. The third step is to consider the category $\mathbf{Kom}(\mathbf{Mat}(\Cobt))$ whose objects are ``formal chain complexes'' $\cdots \to C^k \to C^{k+1} \to \cdots$ of finite length, where the chain groups and differentials are objects and morphisms in $\mathbf{Mat}(\Cobt)$, respectively, such that the composition of consecutive morphisms is zero. Its morphisms are the corresponding analogs of chain maps. 

Given a tangle $T$, the cube-of-resolutions construction (with edges viewed as saddle cobordisms) yields a ``formal'' complex $\lb T \rb$ in $\mathbf{Kom}(\mathbf{Mat}(\Cobt))$. Here the chain groups comprising $\lb T \rb$ are naturally separated by the numbers of 0- and 1-resolutions; in particular, the chain group $\lb T \rb^h$ is generated by smoothings $\sigma$ of $T$ with homological grading $h=|\sigma|-n_-(T)$, where $|\sigma|$ is the number of 1-resolutions in $\sigma$ and $n_-$ denotes the number of negative crossings in $T$. 

\begin{exercise}
Show that the cobordisms associated to Reidemeister I and II induce chain maps $\lb T \rb \to \lb T' \rb$.
\end{exercise}

However, these cobordism maps do \emph{not} generally induce chain homotopy equivalences. This is rectified in the next step.

\begin{remark}
Let us recall the natural notion of chain homotopy in $\mathbf{Kom}$: A pair of morphisms between formal complexes $$f,g:(A^\bullet,\partial_A) \to (B^\bullet,\partial_B)$$ are \emph{homotopic} if there are ``backwards diagonal'' morphisms $h^k: A^k \to B^{k-1}$ such that $$f^k-g^k = h^{k+1} d_A^k + d^{k-1}_B h^k.$$
It is worth recalling here that Khovanov homology (and Bar-Natan's construction) are covariant functors, but the differential \emph{increases} the homological grading.
\end{remark}

\smallskip

\smallskip

\emph{The quotient $\Cobl$}

The complex $\lb T \rb$ is not itself an invariant of the tangle $T$. Instead, Bar-Natan applies the procedure above to a quotient $\smash{\Cobl}$ of $\Cobt$,  chosen so that the corresponding complex $\lb T \rb$ in $\smash{\mathbf{Kom}(\mathbf{Mat}(\Cobl))}$ is indeed an invariant up to chain homotopy equivalence.   For notational convenience, let $\Kobh$ denote $\smash{\mathbf{Kom}(\mathbf{Mat}(\Cobl))}$ modulo homotopy, i.e., where homotopic morphisms are identified.

 We now review the ``local relations'' used to define $\Cobl$:
\begin{itemize}[leftmargin=1.25cm]
\item [$(S)$] \emph{The sphere relation:} If a cobordism $\Sigma$ contains a closed sphere as a connected component, then $\Sigma$ is set equal to zero.

\begin{center} \def\svgwidth{.27\linewidth}
\begingroup%
  \makeatletter%
  \providecommand\color[2][]{%
    \errmessage{(Inkscape) Color is used for the text in Inkscape, but the package 'color.sty' is not loaded}%
    \renewcommand\color[2][]{}%
  }%
  \providecommand\transparent[1]{%
    \errmessage{(Inkscape) Transparency is used (non-zero) for the text in Inkscape, but the package 'transparent.sty' is not loaded}%
    \renewcommand\transparent[1]{}%
  }%
  \providecommand\rotatebox[2]{#2}%
  \newcommand*\fsize{\dimexpr\f@size pt\relax}%
  \newcommand*\lineheight[1]{\fontsize{\fsize}{#1\fsize}\selectfont}%
  \ifx\svgwidth\undefined%
    \setlength{\unitlength}{167.24689514bp}%
    \ifx\svgscale\undefined%
      \relax%
    \else%
      \setlength{\unitlength}{\unitlength * \real{\svgscale}}%
    \fi%
  \else%
    \setlength{\unitlength}{\svgwidth}%
  \fi%
  \global\let\svgwidth\undefined%
  \global\let\svgscale\undefined%
  \makeatother%
  \begin{picture}(1,0.37287511)%
    \lineheight{1}%
    \setlength\tabcolsep{0pt}%
    \put(0,0){\includegraphics[width=\unitlength,page=1]{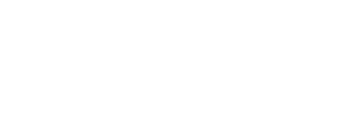}}%
    \put(0.63255133,0.15169833){\color[rgb]{0,0,0}\makebox(0,0)[t]{\smash{\begin{tabular}[t]{c}$=$\end{tabular}}}}%
    \put(0.91606472,0.15169833){\color[rgb]{0,0,0}\makebox(0,0)[t]{\smash{\begin{tabular}[t]{c}$0$\end{tabular}}}}%
    \put(0,0){\includegraphics[width=\unitlength,page=2]{sphere.pdf}}%
  \end{picture}%
\endgroup%

\end{center}

\smallskip

\item [$(T)$] \emph{The torus relation:}  If a cobordism $\Sigma$ contains a closed torus as a connected component, then set $\Sigma = 2 \Sigma'$ where $\Sigma'$ is $\Sigma$ without the torus component.

\begin{center} \def\svgwidth{.3\linewidth}
\begingroup%
  \makeatletter%
  \providecommand\color[2][]{%
    \errmessage{(Inkscape) Color is used for the text in Inkscape, but the package 'color.sty' is not loaded}%
    \renewcommand\color[2][]{}%
  }%
  \providecommand\transparent[1]{%
    \errmessage{(Inkscape) Transparency is used (non-zero) for the text in Inkscape, but the package 'transparent.sty' is not loaded}%
    \renewcommand\transparent[1]{}%
  }%
  \providecommand\rotatebox[2]{#2}%
  \newcommand*\fsize{\dimexpr\f@size pt\relax}%
  \newcommand*\lineheight[1]{\fontsize{\fsize}{#1\fsize}\selectfont}%
  \ifx\svgwidth\undefined%
    \setlength{\unitlength}{205.32783328bp}%
    \ifx\svgscale\undefined%
      \relax%
    \else%
      \setlength{\unitlength}{\unitlength * \real{\svgscale}}%
    \fi%
  \else%
    \setlength{\unitlength}{\svgwidth}%
  \fi%
  \global\let\svgwidth\undefined%
  \global\let\svgscale\undefined%
  \makeatother%
  \begin{picture}(1,0.39250386)%
    \lineheight{1}%
    \setlength\tabcolsep{0pt}%
    \put(0,0){\includegraphics[width=\unitlength,page=1]{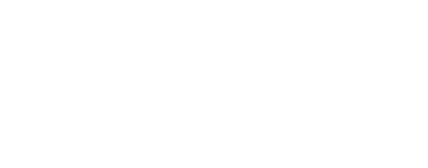}}%
    \put(0.70069987,0.16795555){\color[rgb]{0,0,0}\makebox(0,0)[t]{\smash{\begin{tabular}[t]{c}$=$\end{tabular}}}}%
    \put(0.93163172,0.16795555){\color[rgb]{0,0,0}\makebox(0,0)[t]{\smash{\begin{tabular}[t]{c}$2$\end{tabular}}}}%
    \put(0,0){\includegraphics[width=\unitlength,page=2]{torus.pdf}}%
  \end{picture}%
\endgroup%
 
\end{center}

\medskip

\item [$(\Tu)$] \emph{The 4-tube relation:} Suppose a cobordism $\Sigma$ contains a subsurface consisting of an annulus and two disks as shown on the left below. Then $\Sigma$ is set equal to the linear combination of cobordisms obtained by modifying the configuration of the tube as shown.

\bigskip

\hfill \def\svgwidth{.99\linewidth}
\begingroup%
  \makeatletter%
  \providecommand\color[2][]{%
    \errmessage{(Inkscape) Color is used for the text in Inkscape, but the package 'color.sty' is not loaded}%
    \renewcommand\color[2][]{}%
  }%
  \providecommand\transparent[1]{%
    \errmessage{(Inkscape) Transparency is used (non-zero) for the text in Inkscape, but the package 'transparent.sty' is not loaded}%
    \renewcommand\transparent[1]{}%
  }%
  \providecommand\rotatebox[2]{#2}%
  \newcommand*\fsize{\dimexpr\f@size pt\relax}%
  \newcommand*\lineheight[1]{\fontsize{\fsize}{#1\fsize}\selectfont}%
  \ifx\svgwidth\undefined%
    \setlength{\unitlength}{470.55144062bp}%
    \ifx\svgscale\undefined%
      \relax%
    \else%
      \setlength{\unitlength}{\unitlength * \real{\svgscale}}%
    \fi%
  \else%
    \setlength{\unitlength}{\svgwidth}%
  \fi%
  \global\let\svgwidth\undefined%
  \global\let\svgscale\undefined%
  \makeatother%
  \begin{picture}(1,0.1611432)%
    \lineheight{1}%
    \setlength\tabcolsep{0pt}%
    \put(0,0){\includegraphics[width=\unitlength,page=1]{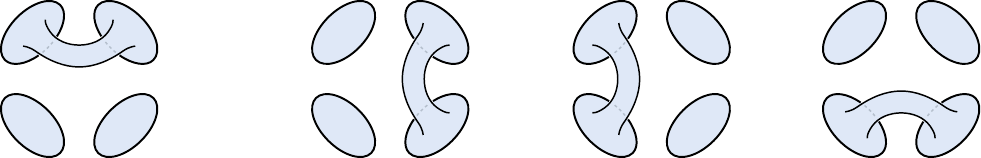}}%
    \put(0.23930639,0.07058563){\color[rgb]{0,0,0}\makebox(0,0)[t]{\smash{\begin{tabular}[t]{c}$=$\end{tabular}}}}%
    \put(0.53134014,0.07058563){\color[rgb]{0,0,0}\makebox(0,0)[t]{\smash{\begin{tabular}[t]{c}$+$\end{tabular}}}}%
    \put(0.79192662,0.07058563){\color[rgb]{0,0,0}\makebox(0,0)[t]{\smash{\begin{tabular}[t]{c}$-$\end{tabular}}}}%
  \end{picture}%
\endgroup%


\end{itemize}

\bigskip

\noindent From this position, Bar-Natan proves the following:

\begin{theorem}[{\cite[Theorem 1]{barnatan}}]
The isomorphism class of the complex $\lb T \rb$, viewed in $\mathbf{Kob_{/h}}$, is an invariant of the tangle $T$.
\end{theorem}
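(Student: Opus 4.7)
The plan is to prove invariance by exhibiting, for each Reidemeister move R1, R2, R3 (and for planar isotopy), an explicit chain homotopy equivalence $\lb T \rb \simeq \lb T' \rb$ in $\Kobh$ between the formal complexes associated to tangle diagrams that differ by that local move. The reduction to these local checks is justified by two observations: any two diagrams of the same tangle are connected by a finite sequence of Reidemeister moves and planar isotopies, and the cube-of-resolutions construction is local --- a move supported in a small disk $\Delta$ only alters the part of the complex coming from $\Delta$, while the rest is glued on via the planar-algebra structure of $\Cobl$. Thus it suffices to construct each homotopy equivalence once, inside $\Delta$, and then propagate it by ``tensoring'' with the ambient smoothings.

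For planar isotopy the invariance is built in: morphisms in $\Cobl$ are cobordisms up to boundary-preserving homeomorphism, so a planar isotopy of the diagram induces a tautological identification of the two complexes. For R1, the local complex $\lb \kropos \rb$ is a two-term complex with chain groups $\lb \krosa \rb$ and $\lb \krosb \rb$ joined by a saddle, while $\lb \kroarc \rb$ is a single term. I would write down candidate chain maps in both directions, assembled from elementary cobordisms (saddles together with a cup or a cap) applied to the local smoothings, and compute the two compositions directly. One composition reduces to the identity after the sphere relation $(S)$ annihilates a closed-sphere component; the other is chain-homotopic to the identity via a homotopy whose verification uses the torus relation $(T)$ together with the ``neck-cutting'' identity, a direct consequence of $(\Tu)$ applied to a closed tube inside a cobordism. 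A symmetric argument handles the negative R1.

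For R2, the cleanest route is Gaussian elimination in $\Kobh$: inside the cube for $\lb \krtcr \rb$ there is a pair of objects at adjacent vertices joined by a cylinder cobordism that is already an isomorphism in $\Cobl$. The standard cancellation lemma collapses this acyclic subcomplex, leaving a complex isomorphic to $\lb \krtcrl \rb$, with the residual differentials read off from the remaining edges of the cube. The main task is bookkeeping: check that the cancellation data assembles into an honest chain map, and that the surviving differentials do not introduce extra relations beyond those already in $\Cobl$.

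The hard case, and the main obstacle, is R3. My plan is to follow Bar-Natan's strategy: apply the R2 equivalence just proved at a strategically chosen crossing in each of the two R3 diagrams, producing two expanded formal complexes, and then reduce both to a common complex via further Gaussian eliminations. The $(\Tu)$ relation enters crucially when identifying linear combinations of cobordisms produced by these successive eliminations. The difficulty here is almost entirely combinatorial --- tracking the diagonal ``correction'' morphisms generated by each cancellation, making consistent sign/orientation choices, and ensuring that the planar-algebraic gluing with the surrounding smoothings commutes with the whole procedure on the nose in $\Kobh$. Once R1, R2, R3, and planar isotopy are in place, combining them with the locality reduction of the first paragraph establishes that $\lb T \rb$ is a well-defined invariant of the tangle $T$ in $\Kobh$.
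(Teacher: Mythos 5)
Your overall strategy — reduce to local Reidemeister moves via the planar-algebra structure of $\Cobl$, then exhibit explicit chain homotopy equivalences for each move — is exactly the paper's (and Bar-Natan's) framework, and your R1 argument matches the paper's sketch step by step: one composition collapses to the identity by the sphere relation $(S)$, the other is homotopic to the identity by a diagonal morphism whose verification reduces to the 4-tube relation $(\Tu)$. One small inaccuracy there: the torus relation $(T)$ is not needed for R1; the paper's R1 homotopy check uses only $(S)$ and $(\Tu)$.

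Where you genuinely depart is in R2 and R3. Bar-Natan's original Theorem~1 proof handles R2 by writing down the chain maps and homotopy directly, and handles R3 via the ``categorified Kauffman trick'' (reassociating mapping cones after applying the R2 equivalence at a resolved crossing); the paper alludes to this with ``adaptation of more homological algebra.'' You instead invoke delooping and Gaussian elimination, which is Bar-Natan's own \emph{later} streamlining (the ``fast computation'' perspective). This buys you a cleaner, more mechanical reduction — R2 and R3 become a sequence of cancellations rather than bespoke homotopies — at the cost of a richer toolkit (you must first establish the delooping isomorphism $\boldsymbol{\bigcirc} \cong \emptyset\{-1\}\oplus\emptyset\{+1\}$ and the Gaussian elimination lemma in $\Kobh$). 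One place your description elides a step: in the R2 square there is no cylinder isomorphism in $\Cobl$ ``already'' present between adjacent vertices; the isomorphism only appears \emph{after} you deloop the free circle at the middle vertex, at which point one component of the split saddle becomes a product cobordism that can be cancelled. With that correction, your route is sound and equivalent to the paper's, and the locality/planar-algebra reduction you invoke is precisely how Bar-Natan promotes the local equivalences to a global invariance statement in $\Kobh$.
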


The strategy of the proof is to associate chain homotopy equivalences to each of the three Reidemeister moves --- indeed, this is where the definitions of the cobordism maps for Reidemeister I and II moves in Tables~\ref{table:R1}-\ref{table:R2}. We illustrate this by sketching the proof of invariance for one of the Reidemeister I moves. The idea of the proof is captured in Figure~\ref{fig:R1-chain}, whose horizontal rows depict the two local complexes $\lb T \rb$ and $\lb T' \rb$ that we claim are isomorphic (where $T$ has one crossing and $T'$ has none); here superscripts on $\lb \,\cdot \, \rb$ denote homological gradings. The vertical arrows describe morphisms $f$ and $g$, which can be understood as being induced by cobordisms (that induce the zero map on smoothings that disagree with the incoming boundary of the cobordism).

\begin{figure}
\center
\hfill \def\svgwidth{.925\linewidth}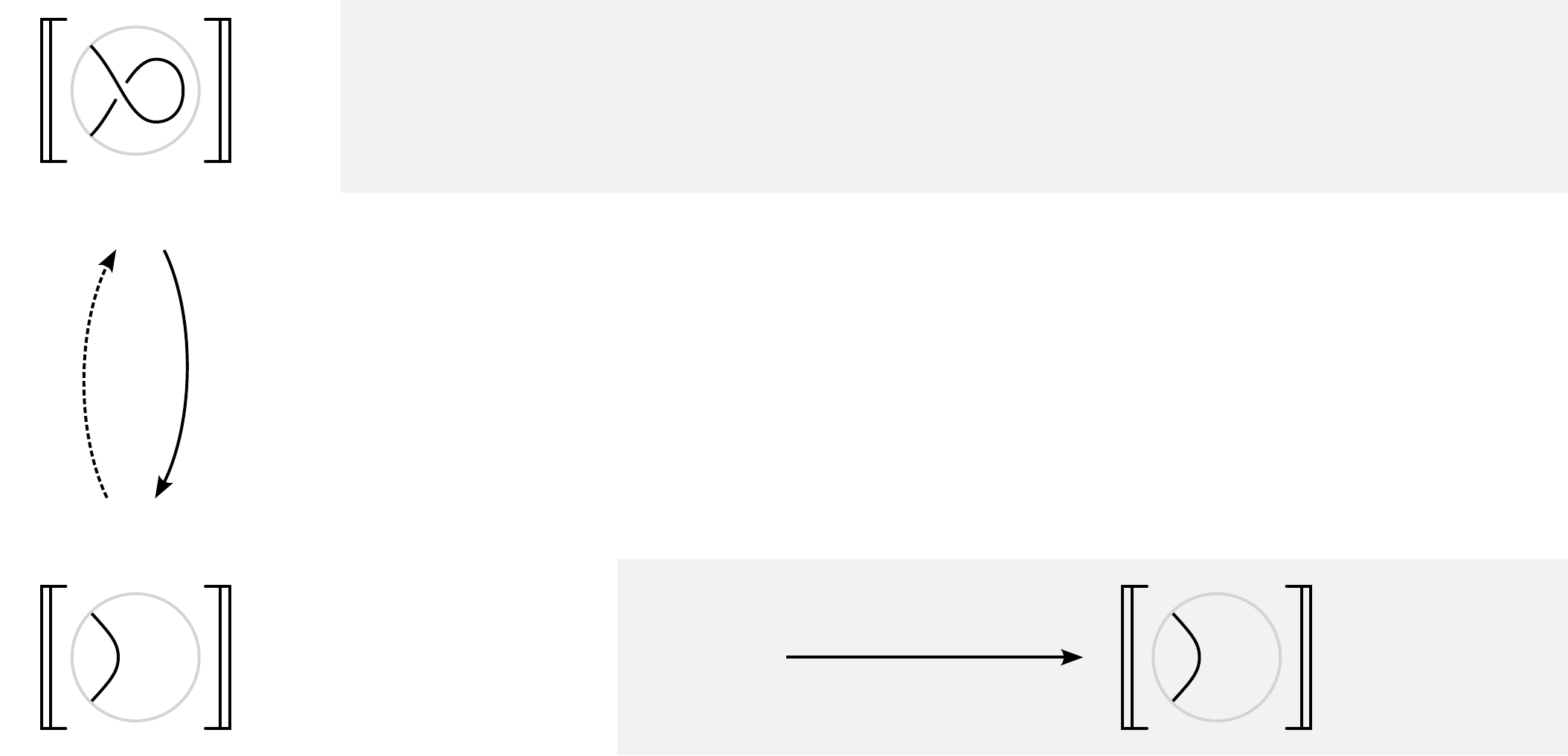
\caption{Invariance under one of the  Reidemeister I moves.}\label{fig:R1-chain}
\end{figure}

\begin{wrapfigure}[10]{r}{.44\linewidth}
\vspace{-0.21in}

\center

\includegraphics[width=.84\linewidth]{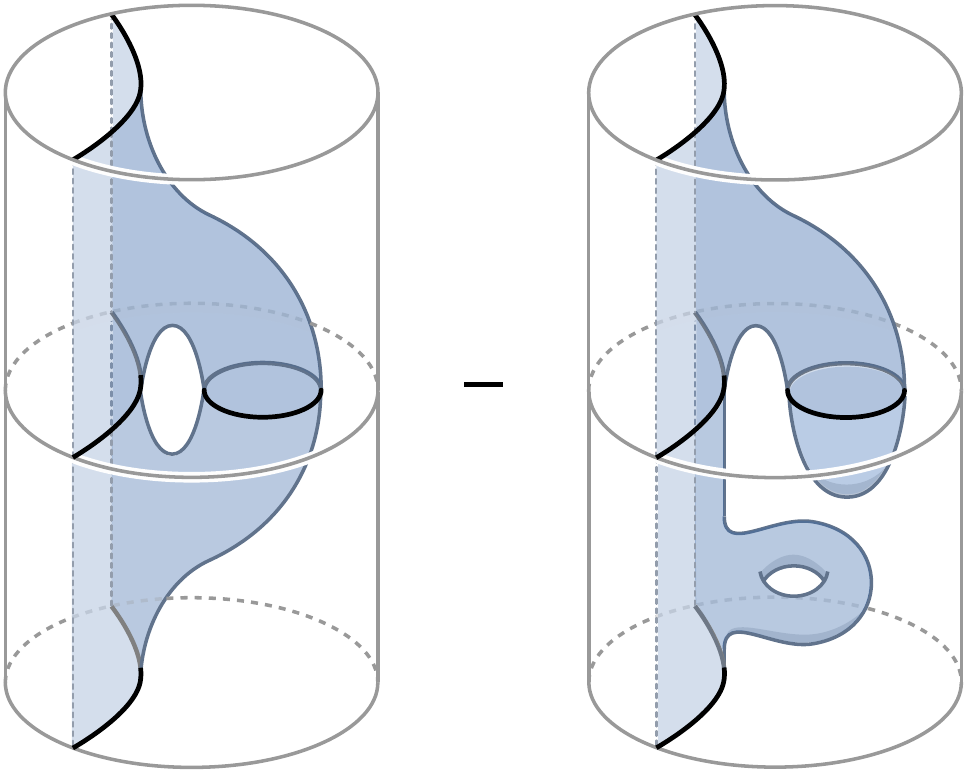}
\caption{}\label{fig:R1-chain-check1}
\end{wrapfigure}

First we  check that $f$ and $g$ are chain maps, which amounts to checking that the central square of Figure~\ref{fig:R1-chain} commutes (for each fixed choice of $f$ and $g$). The compositions $g \circ d$ and $d \circ g$ are both clearly zero. Similarly, the composition $f \circ d$ is clearly zero, so we must also show that  the composition $d \circ f$ is zero. This composition corresponds to the morphism shown in Figure~\ref{fig:R1-chain-check1}, which is clearly zero because it is the difference of two isotopic cobordisms.

Next we must show that $f$ and $g$ are homotopy inverses. To that end, we first consider the composition $f \circ g$. The only nontrivial component of this map corresponds to the morphism shown in Figure~\ref{fig:R1-invariance}(a); it is a difference of two cobordisms where the first is equivalent to a product (hence induces the identity) and the second contains a 2-sphere (hence vanishes by the sphere relation). This implies $f \circ g = \id$.

\begin{figure}
\center
\hfill \def\svgwidth{.95\linewidth}
\begingroup%
  \makeatletter%
  \providecommand\color[2][]{%
    \errmessage{(Inkscape) Color is used for the text in Inkscape, but the package 'color.sty' is not loaded}%
    \renewcommand\color[2][]{}%
  }%
  \providecommand\transparent[1]{%
    \errmessage{(Inkscape) Transparency is used (non-zero) for the text in Inkscape, but the package 'transparent.sty' is not loaded}%
    \renewcommand\transparent[1]{}%
  }%
  \providecommand\rotatebox[2]{#2}%
  \newcommand*\fsize{\dimexpr\f@size pt\relax}%
  \newcommand*\lineheight[1]{\fontsize{\fsize}{#1\fsize}\selectfont}%
  \ifx\svgwidth\undefined%
    \setlength{\unitlength}{1047.24525752bp}%
    \ifx\svgscale\undefined%
      \relax%
    \else%
      \setlength{\unitlength}{\unitlength * \real{\svgscale}}%
    \fi%
  \else%
    \setlength{\unitlength}{\svgwidth}%
  \fi%
  \global\let\svgwidth\undefined%
  \global\let\svgscale\undefined%
  \makeatother%
  \begin{picture}(1,0.3909609)%
    \lineheight{1}%
    \setlength\tabcolsep{0pt}%
    \put(0,0){\includegraphics[width=\unitlength,page=1]{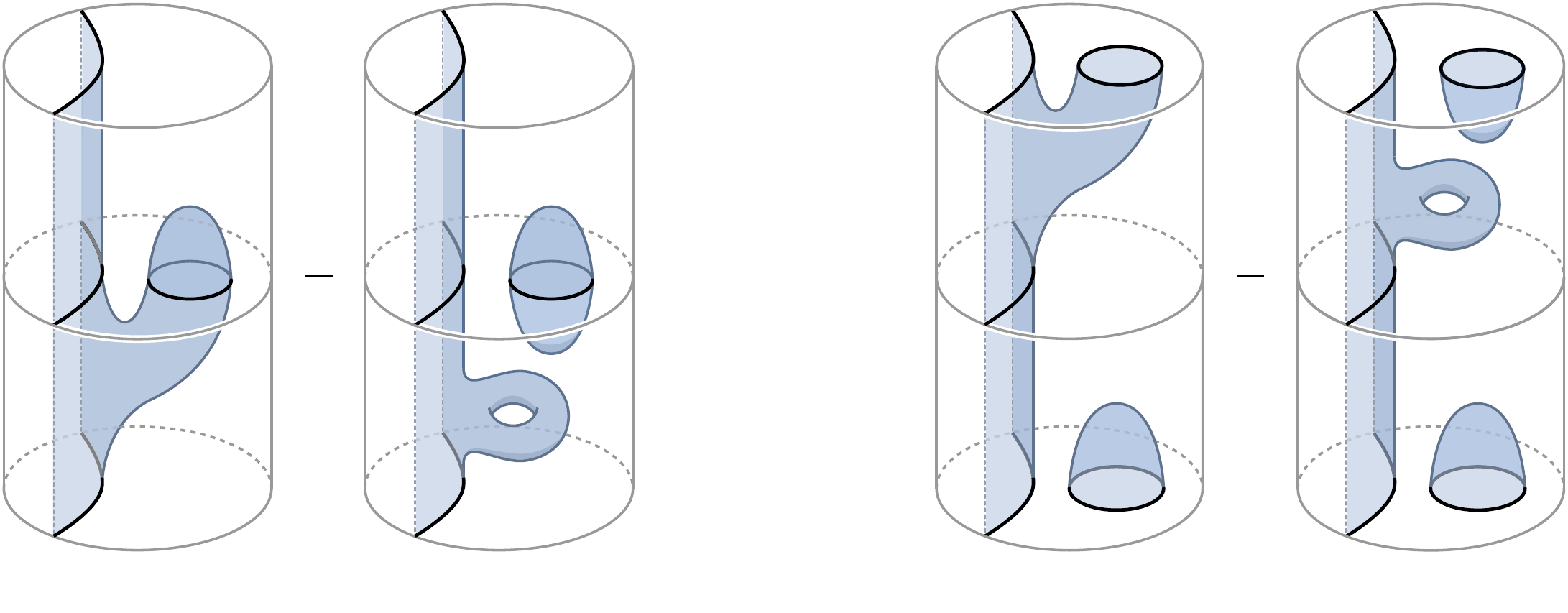}}%
    \put(0.20306524,0.00384192){\color[rgb]{0,0,0}\makebox(0,0)[t]{\smash{\begin{tabular}[t]{c}(a)\end{tabular}}}}%
    \put(0.79744567,0.00384192){\color[rgb]{0,0,0}\makebox(0,0)[t]{\smash{\begin{tabular}[t]{c}(b)\end{tabular}}}}%
  \end{picture}%
\endgroup%

\caption{}\label{fig:R1-invariance}
\end{figure}

Next we consider the composition $g \circ f$.   The only nontrivial component of the composition $g \circ f$ is represented by the difference of cobordisms $\lb T \rb^0 \to \lb T \rb^0$  in Figure~\ref{fig:R1-invariance}(b); in particular, the composition $g \circ f$ is zero on $\lb T \rb^{-1} \to \lb T \rb^{-1}$.  Since neither of these are obviously equivalent to the identity, we will need to use a nontrivial chain homotopy. To that end, we note that the chain homotopy map $h$ will have at most one possible nontrivial component,  as depicted on the left side of Figure~\ref{fig:chain-homotopy}. A natural candidate for the morphism $h$ is shown on the right side of Figure~\ref{fig:chain-homotopy}. It remains to verify the condition $g \circ f - \id = h d + d h$.

\begin{figure}[b]
\center
\hfill \def\svgwidth{\linewidth}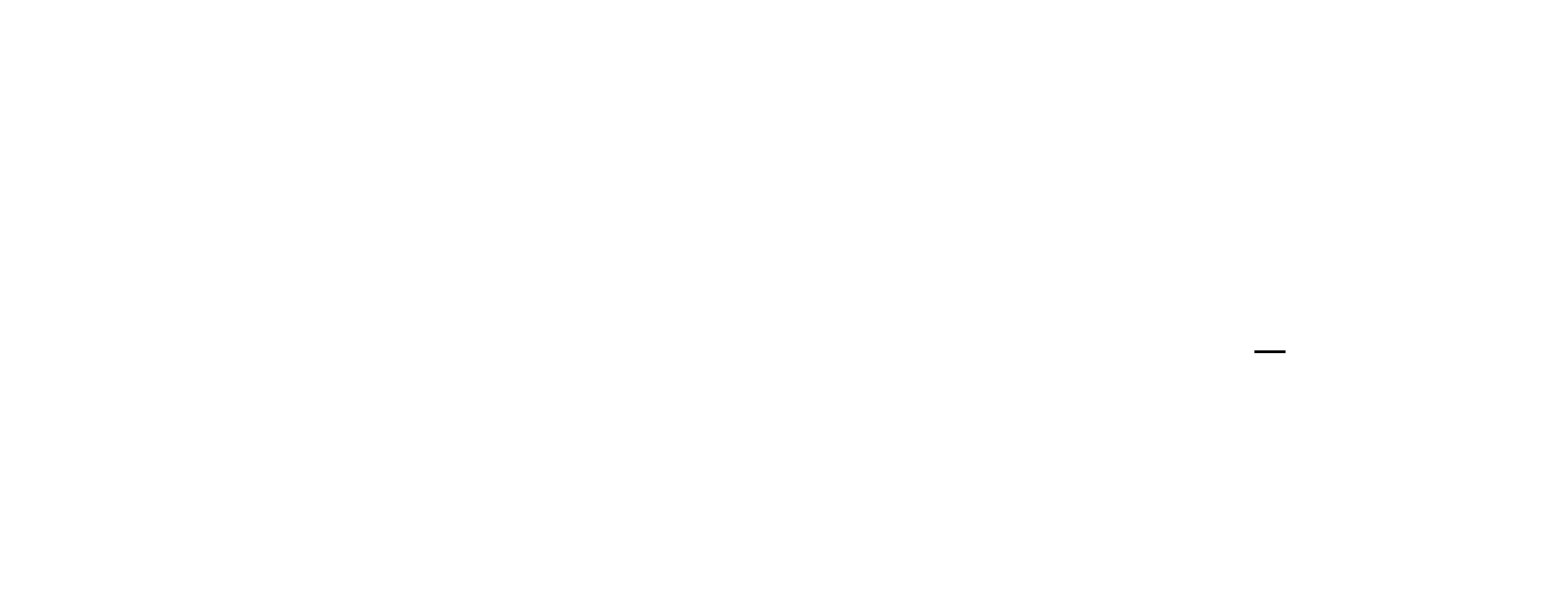
\caption{}\label{fig:chain-homotopy}
\end{figure}

 In homological grading $-1$, we have $g \circ f = 0$, so we must show that $-\id = h^0 d^{-1} + d^{-2} h^{-1}$; the morphism $d^{-2} h^{-1}$ is zero and $h^0 d^{-1}$ is easily seen to be $-\id$. 
 
 In homological grading $0$, we must establish the equality
$$(g \circ f)^{0} - \id = h^{1} d^{0}+d^{-1}h^{0}.$$
Since $h^{1} d^0$ is zero, this becomes equivalent to the relationship in Figure~\ref{fig:R1-invariance-hard}. After rearranging, this follows immediately from the 4-tube relation $(\Tu)$. 
Thus, we have shown that the local complexes $\lb T \rb$ and $\lb T' \rb$ are isomorphic.

\begin{figure}
\center
 \def\svgwidth{.925\linewidth}
\begingroup%
  \makeatletter%
  \providecommand\color[2][]{%
    \errmessage{(Inkscape) Color is used for the text in Inkscape, but the package 'color.sty' is not loaded}%
    \renewcommand\color[2][]{}%
  }%
  \providecommand\transparent[1]{%
    \errmessage{(Inkscape) Transparency is used (non-zero) for the text in Inkscape, but the package 'transparent.sty' is not loaded}%
    \renewcommand\transparent[1]{}%
  }%
  \providecommand\rotatebox[2]{#2}%
  \newcommand*\fsize{\dimexpr\f@size pt\relax}%
  \newcommand*\lineheight[1]{\fontsize{\fsize}{#1\fsize}\selectfont}%
  \ifx\svgwidth\undefined%
    \setlength{\unitlength}{1221.04697881bp}%
    \ifx\svgscale\undefined%
      \relax%
    \else%
      \setlength{\unitlength}{\unitlength * \real{\svgscale}}%
    \fi%
  \else%
    \setlength{\unitlength}{\svgwidth}%
  \fi%
  \global\let\svgwidth\undefined%
  \global\let\svgscale\undefined%
  \makeatother%
  \begin{picture}(1,0.38106077)%
    \lineheight{1}%
    \setlength\tabcolsep{0pt}%
    \put(0,0){\includegraphics[width=\unitlength,page=1]{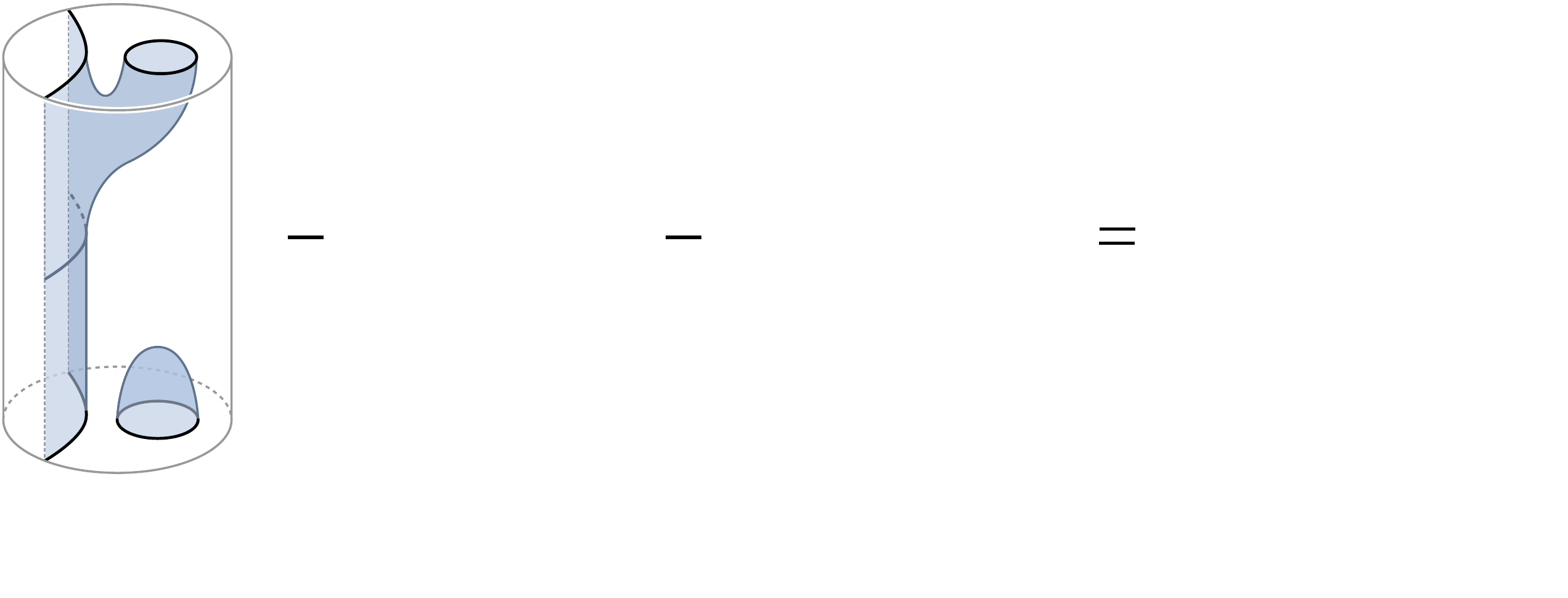}}%
    \put(0.19452239,0.00338306){\color[rgb]{0.6,0.6,0.6}\makebox(0,0)[t]{\smash{\begin{tabular}[t]{c}$(g \circ f)^0$\end{tabular}}}}%
    \put(0.89578825,0.00325908){\color[rgb]{0.6,0.6,0.6}\makebox(0,0)[t]{\smash{\begin{tabular}[t]{c}$d^{-1} h^0$\end{tabular}}}}%
    \put(0,0){\includegraphics[width=\unitlength,page=2]{invariance-R1-hard.pdf}}%
    \put(0.55640343,0.0031351){\color[rgb]{0.6,0.6,0.6}\makebox(0,0)[t]{\smash{\begin{tabular}[t]{c}$\id$\end{tabular}}}}%
  \end{picture}%
\endgroup%

\caption{}\label{fig:R1-invariance-hard}
\end{figure}

The local proofs for the remaining Reidemeister moves use some similar ingredients, as well the adaptation of more homological algebra to the setting of Bar-Natan's formal complexes (especially for Reidemeister III moves, which involve more complicated local complexes). Once this is done, Bar-Natan uses the framework of \emph{planar algebras} to show that the local arguments indeed establish invariance at the global level \cite[\S5]{barnatan}.


%

\smallskip
\smallskip 

\emph{...and Khovanov homology?}

The functor $\fcal: \Cobt \to \Mod_\rcal$ from Definition~\ref{def:tqft} naturally induces a functor from $\mathbf{Kob_{/h}}$ to  $\mathbf{Kom}_\rcal$, the category of chain complexes of $\rcal$-modules; we will also denote this functor by $\fcal$. For any link $L$, applying $\fcal$ to $\lb L \rb \in \Kobh$ yields a complex of $\rcal$-modules $\fcal(\lb L \rb )$ that coincides with  Khovanov's complex.

\medskip 
\subsubsection{Remarks on the proof of invariance of the cobordism maps}

Bar-Natan's framework is also a natural environment for establishing invariance of the maps induced by link cobordisms. Just as in \cite{jacobsson}, Bar-Natan uses the fact that any pair of movies of link diagrams describing link cobordisms that are isotopic rel boundary are related by a finite collection of \emph{movie moves} \cite{carter-saito}. An example of a movie movie is shown in Figure~\ref{fig:movie-move}.

\begin{figure}[b]
\center
\includegraphics[width=.8\linewidth]{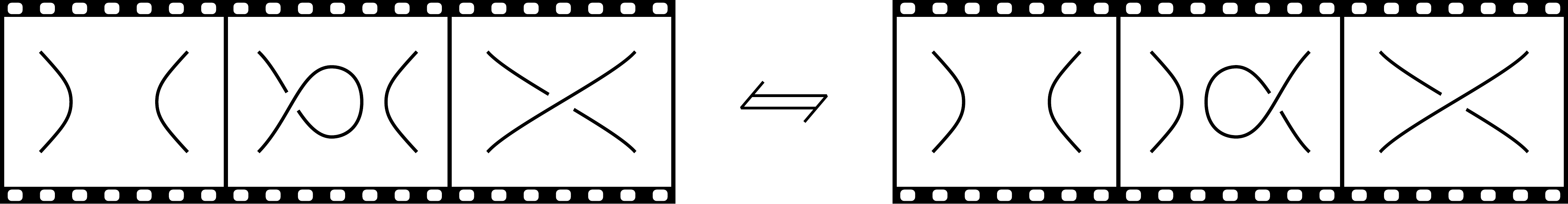}
\caption{}\label{fig:movie-move}
\end{figure}


Bar-Natan then shows that any two local movies that are related by movie moves induce the same morphisms in $\mathbf{Kob_{/h}}$ (up to multiplication by $\pm 1$). In some cases, this is achieved by directly calculating the induced morphisms; see Exercise~\ref{exer:movie-move} below for an example. For many cases, Bar-Natan takes a less direct approach, instead arguing that the local complexes associated to the tangles involved in a given movie-move have no automorphisms except $\pm \id$. This ensures that the two local movies induce the same automorphism (up to multiplication by $\pm 1$).

\begin{exercise}\label{exer:movie-move}
Show that the two local cobordisms depicted in Figure~\ref{fig:movie-move} induce the same morphism in $\mathbf{Kob_{/h}}$. (From left-to-right, as well as right-to-left.)


\hfill \emph{Hint: Verify (and apply) the so-called \emph{neck-cutting relation} shown in Figure~\ref{fig:neck}.}

\begin{figure}\center
  \def\svgwidth{.9\linewidth}
\begingroup%
  \makeatletter%
  \providecommand\color[2][]{%
    \errmessage{(Inkscape) Color is used for the text in Inkscape, but the package 'color.sty' is not loaded}%
    \renewcommand\color[2][]{}%
  }%
  \providecommand\transparent[1]{%
    \errmessage{(Inkscape) Transparency is used (non-zero) for the text in Inkscape, but the package 'transparent.sty' is not loaded}%
    \renewcommand\transparent[1]{}%
  }%
  \providecommand\rotatebox[2]{#2}%
  \newcommand*\fsize{\dimexpr\f@size pt\relax}%
  \newcommand*\lineheight[1]{\fontsize{\fsize}{#1\fsize}\selectfont}%
  \ifx\svgwidth\undefined%
    \setlength{\unitlength}{1913.27423336bp}%
    \ifx\svgscale\undefined%
      \relax%
    \else%
      \setlength{\unitlength}{\unitlength * \real{\svgscale}}%
    \fi%
  \else%
    \setlength{\unitlength}{\svgwidth}%
  \fi%
  \global\let\svgwidth\undefined%
  \global\let\svgscale\undefined%
  \makeatother%
  \begin{picture}(1,0.09778025)%
    \lineheight{1}%
    \setlength\tabcolsep{0pt}%
    \put(0,0){\includegraphics[width=\unitlength,page=1]{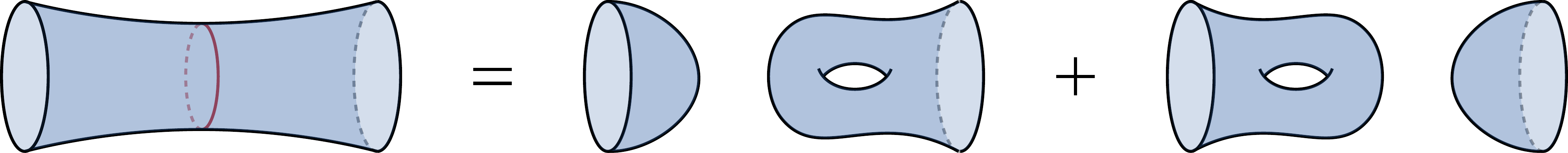}}%
    \put(-0.02366867,0.04129837){\color[rgb]{0,0,0}\makebox(0,0)[t]{\smash{\begin{tabular}[t]{c}{\normalsize$2$}\end{tabular}}}}%
  \end{picture}%
\endgroup%

\caption{The neck-cutting relation in the Bar-Natan category.}\label{fig:neck}
\end{figure}
\end{exercise}


Bar-Natan's strategy is carried out at the level of the category $\Kobh$ and therefore applies to any variant of Khovanov homology obtained by applying a compatible TQFT to $\Kobh$. In particular, by changing the Frobenius algebra as in \S\ref{subsec:bar-natan}, one  obtains invariance for Bar-Natan homology.


\smallskip
\smallskip

\emph{Formal quantum gradings and delooping}

When defining the formal complexes $\lb \hspace{1pt} \cdot \hspace{1pt} \rb$ above, we saw that a natural homological grading could be assigned to smoothings (i.e., planar tangles) in the Bar-Natan category, consistent with the homological grading previously defined \emph{after} applying the TQFT. A formal quantum grading can also be incorporated at this stage. We briefly recall this here, following the discussion in \cite[\S6]{barnatan} and \cite{morrison:local-slides}; see also  \cite{zhang:notes}.

\begin{remark}\label{rem:graded}
In a graded category, the collection of morphisms $\hom(A,B)$ between objects $A$ and $B$ must form a \emph{graded} abelian group, and the set of objects must admit a $\zz$-action that shifts gradings, written $(m,A)\mapsto A\{m\}$ for $m \in \zz$. Shifting gradings does not affect the morphisms themselves, so we have
$$\hom(A\{m\},B\{n\})=\hom(A,B)$$
as abelian groups. However, gradings are affected: if $f \in \hom(A,B)$ has degree $d$, then $f \in \hom(A\{m\},B\{n\})$ has degree $d+n-m$. If the objects have no intrinsic grading (as is the case with $\Cobt$), we may view each underlying object $A$ as having grading 0 and let $A\{m\}$ denote a copy with its grading shifted by $m \in \zz$. \hfill $\diamond$
\end{remark}

\smallskip

Given a fixed set $B$ consisting of an even number of distinct points in $\partial D^2$, we wish to upgrade $\Cobt(B)$ to a graded category.  As in Remark~\ref{rem:graded}, we may extend the set of objects (i.e., 1-manifolds $\sigma$ in $D^2$ with $\partial \sigma=B$) to include grading-shifted copies of the objects (i.e., $\sigma\{m\}$ for $m \in \zz$). Given a cobordism $\Sigma$ between objects $\sigma,\sigma' \in \Cobt(B)$,  there is a natural notion of degree given by
$$\deg(\Sigma) := \chi(\Sigma)-\tfrac{1}{2}|B|$$
when $\Sigma$ is viewed as an element of $\hom(\sigma,\sigma')$. (Here $|B|$ denotes the number of points in $B$.) Thus, when viewed as a morphism in $\hom(\sigma\{m\},\sigma'\{n\})$, we have 
$$\deg(\Sigma)= \chi(\Sigma)-\tfrac{1}{2}|B|+n-m.$$



\begin{exercise}
    Show that the degree is additive under composition of cobordisms in $\Cobt(B)$ (whenever the composition is defined).
\end{exercise}


These choices can be extended to Bar-Natan's category of formal complexes $\Kobh(B)$. In particular, given a tangle diagram $T$ in $D^2$ with boundary $B$, we equip $\lb T\rb$ with formal quantum gradings by replacing each chain group $\lb T \rb^h$ with a copy $\lb T\rb^h\{h +(n_+ - n_-)\}$, where the latter has its quantum grading shifted as indicated. That is, we have
$$
\lb T \rb : \quad \quad \lb T \rb^{-n_-}\{n_+-2n_-\} \ \longrightarrow \ \cdots  \ \longrightarrow  \ \lb T \rb^{n_+}\{2n_+-n_-\}.
$$

\begin{exercise}
Show that all differentials in $\lb T \rb$ have degree zero with respect to the quantum grading.
\end{exercise}

The introduction of this formal quantum grading makes possible an elegant statement\footnote{This fact can be stated  without quantum gradings, albeit less informatively.} about removing closed components of smoothings, 
via a process known as \emph{delooping} \cite{bar-natan:fast} (c.f., \cite{morrison:delooping-slides}):

\smallskip

\begin{exercise}\label{exer:delooping}
    In $\Cobl$, prove there is an isomorphism $\hspace{1pt} \boldsymbol{\bigcirc} \hspace{1pt} \cong \hspace{1pt}  \emptyset \{-1\} \oplus \emptyset\{+1\}$. 


\hfill  \emph{Hint: The morphism out of $\hspace{1pt} \boldsymbol{\bigcirc} \hspace{1pt}$ should be a direct sum of two cobordisms to $\emptyset$.}
\end{exercise}

\medskip
\smallskip

\emph{Dotted cobordisms}

The final part of our discussion concerns another modification of the Bar-Natan category: \emph{dotted cobordisms} \cite[\S11]{barnatan}. The category $\Cobtd$ is defined to have the same objects as $\Cobt$ and to contain all of the morphisms from $\Cobt$, however we now allow these cobordisms to be decorated with a finite number of dots ($\bullet$). Each dot shifts the degree of the cobordism by $-2$ and can be moved freely within its connected component of the cobordism. A quotient category $\Cobld$ is formed by taking $\Cobtd$ modulo the relations in Figure~\ref{fig:dotted-relations}. As before, we can obtain Khovanov homology by applying an appropriate TQFT to $\Cobld$; at the chain level, a dot on a cobordism has the effect of multiplication by $\bfx$.

\begin{figure}[t]
\center
\def\svgwidth{\linewidth}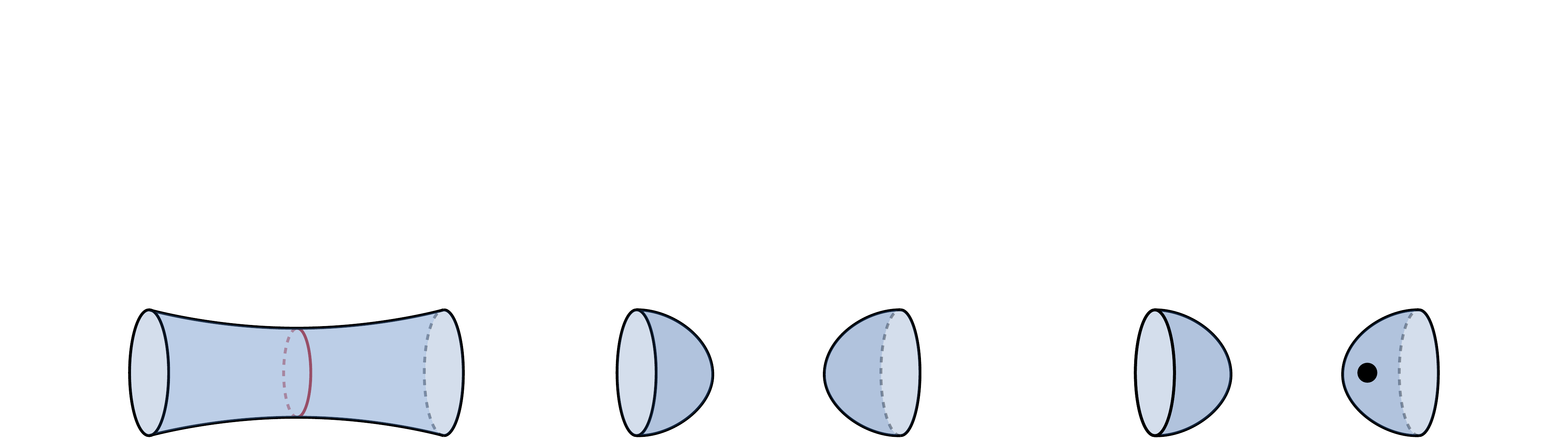
\caption{Relations in Bar-Natan's dotted cobordism category.}\label{fig:dotted-relations}
\end{figure}

\begin{exercise}
    Show that the relations in Figure~\ref{fig:dotted-relations} imply the torus relation $(T)$ and 4-tube relation $(\Tu)$.
\end{exercise}

\begin{exercise}
    Reprove the delooping isomorphism (Exercise~\ref{exer:delooping}) in the dotted cobordism category.
\end{exercise}

We close with an application to ribbon concordance. Given a pair of knots $K$ and $K'$ in $S^3$, recall that a \emph{concordance} from $K$ to $K'$ is a smoothly embedded annulus $C \subset S^3 \times [0,1]$ cobounded by $K \subset S^3 \times 0$ and $K' \subset S^3 \times 1$. We say $C$ is \emph{ribbon} if the projection to $[0,1]$ restricts to a Morse function on $C$ that has no local maxima (i.e., only index 0 and index 1 critical points). 

In \cite{zemke:ribbon}, Zemke showed that any ribbon concordance induces an injective map on knot Floer homology. This was later shown to hold for Khovanov homology by Levine-Zemke \cite{levine-zemke} (see also \cite{kang:ribbon}).   
Zemke's key topological input is the following description of the doubled cobordism $ \bar C \circ C$, where $\bar C$ is obtained by reversing $C$ in time.

\begin{lemma}[Zemke] Let $C$ be a ribbon concordance from $K$ to $K'$ as above. There exist unknotted, unlinked 2-spheres $S_1,\ldots,S_n$ in $(S^3 \setminus K) \times [0,1]$ and disjointly embedded 3-dimensional 1-handles $h_1,\ldots,h_n$ in $S^3 \times [0,1]$ such that 
\begin{itemize}
\item $h_i$ joins $K \times [0,1]$ to $S_i$,
\item  $h_i$ is disjoint from $S_j$ for each $j \neq i$, and
\item $ \bar C \circ C$ is obtained from $(K \times [0,1]) \cup S_1\cup \cdots \cup S_n$ by surgery along $h_1,\ldots,h_n$.
\end{itemize}
\end{lemma}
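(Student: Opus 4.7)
The plan is to extract the spheres $S_i$ and handles $h_i$ directly from a normal form for the Morse function on $C$. Using Cerf theory on the Morse function $f|_C$, I would first rearrange the critical points so that all $n$ index-$0$ critical points (births) of $C$ occur strictly below all $n$ index-$1$ critical points (merging saddles). At a regular value $s$ between the births and the saddles, the level set of $C$ is $K$ together with $n$ small, mutually unknotted and unlinked circles $U_1, \ldots, U_n$, each contained in a disjoint $3$-ball $B_i \subset S^3 \setminus K$ and bounding a small disk $D_i^{\downarrow} \subset C$. By time-reversal, $\bar C$ then automatically has all $n$ splitting saddles strictly below all $n$ deaths: at a regular value $s'$ near the top of the composed cobordism, the level set is $K$ together with small unknots $V_1, \ldots, V_n$ in disjoint $3$-balls (isotopic to the $U_i$ through the product structure) and bounding small disks $D_i^{\uparrow} \subset \bar C$.

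With $C$ and $\bar C$ in this form, the composition $\bar C \circ C$ has three natural strata: a bottom region that is $K \times I$ together with $D_1^{\downarrow}, \ldots, D_n^{\downarrow}$; a middle region containing only saddles ($n$ merges from $C$ followed immediately by $n$ splits from $\bar C$); and a top region consisting of $K \times I$ together with $D_1^{\uparrow}, \ldots, D_n^{\uparrow}$. The middle is the crucial stratum. Because $\bar C$ is the time-reversal of $C$, the $n$ splits are the time-reverses of the $n$ merges in the reverse order, so each split undoes a specific merge along the time-reverse of the same band. I would isotope each merge--split pair together into a small product collar $B^3 \times [0,1] \subset S^3 \times I$, inside which it becomes a standard cancelling pair of saddles on a local tangle diagram. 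A regular neighborhood of such a cancelling pair is a $4$-ball, and inside it the modified surface is obtained from a pair of parallel level cylinders by attaching a single $3$-dimensional $1$-handle in the ambient $4$-manifold; these are the $h_i$.

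After this cancellation, $\bar C \circ C$ has been isotoped to $(K \times I) \cup S_1 \cup \cdots \cup S_n$ with surgery performed along $h_1, \ldots, h_n$, where each sphere $S_i = D_i^{\downarrow} \cup (U_i \times [s, 1{-}s']) \cup D_i^{\uparrow}$ is formed by capping the cylinder through the now-trivial middle region with the birth and death disks. Each $S_i$ is unknotted because it sits inside a $4$-ball of the form $B_i \times [s-\varepsilon, 1-s'+\varepsilon] \subset S^3 \times I$ disjoint from $K \times I$; the same observation shows that the $S_i$ are mutually unlinked and disjoint from $K \times I$. The handles $h_i$ are disjointly embedded, and disjoint from $S_j$ for $j \neq i$, because the small collars supporting the cancelling saddle pairs can be chosen mutually disjoint and disjoint from the $B_i \times [s-\varepsilon, 1-s'+\varepsilon]$. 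The main obstacle will be the cancellation in the middle region: carefully verifying that the $n$ merge bands and the $n$ split bands, a priori at different heights and possibly entangled after isotopy, can simultaneously be separated into $n$ disjoint cancelling pairs. This reduces to the combinatorial fact that the last merge is immediately followed by its own time-reverse split, so the pairs can be cancelled inductively --- cancel the innermost pair first, shrink it into a small disjoint neighborhood, and repeat on the remaining $(n-1)$ pairs inside a smaller collar --- with no braiding obstruction ever arising.
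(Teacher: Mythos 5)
The paper cites this as Zemke's lemma without reproducing a proof, so there is no internal argument to compare against; I am comparing against the standard argument from Zemke's paper. Your overall strategy (Cerf rearrangement placing births below saddles, identifying the spheres $S_i$ from birth-disk/cylinder/death-disk, and recognizing each merge--split pair in $\bar C \circ C$ as a $3$-dimensional $1$-handle rather than genuinely cancelling it) is the right one, and the unknottedness/unlinkedness argument via the $4$-balls $B_i \times I$ is correct, modulo the small slip that the interval must run from just below the birth of $U_i$ to just above its death, not merely $[s-\varepsilon,\,1-s'+\varepsilon]$.

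The concrete gap is the clause ``$h_i$ joins $K \times [0,1]$ to $S_i$.'' Your construction produces tubes supported on the saddle-band pairs $(b_i,\bar b_i)$, and a band $b_i$ of a ribbon concordance need not merge the $K$-component with an unknot $U_i$: after births the merging tree on $\{K, U_1, \ldots, U_n\}$ is an arbitrary spanning tree, so some $b_i$ may join $U_j$ to $U_k$, in which case your $h_i$ joins $S_j$ to $S_k$ rather than $K\times I$ to a sphere. You frame the ``main obstacle'' as whether the $n$ pairs can be isolated into disjoint collars (a disjointness issue), but the genuinely nontrivial point is this attaching-region issue, and it is not addressed. To close it one needs a band-slide argument: whenever a band has neither foot on the $K$-component, slide one of its feet along an adjacent band in the tree toward $K$; iterating along the tree yields a presentation in which every band runs from $K$ to a distinct $U_i$, after which your construction produces handles with the required attaching regions. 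Equivalently one may perform the corresponding handle slides on the tubes $h_i$ inside $S^3 \times I$ after the surfaces $(K\times I)\cup S_1\cup\cdots\cup S_n$ and tubes are in place. Either way this step must be supplied; without it the constructed $h_i$ need not satisfy the bulleted conditions, and your appeal to there being ``no braiding obstruction'' does not touch this issue. A secondary terminological point: you should avoid calling the isolation of the merge--split pairs a ``cancellation,'' since literally cancelling the two saddles would change $\bar C \circ C$; what you do (and clearly intend) is to identify the pair as a tube, which is the opposite operation to cancellation.
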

An example is depicted in Figure~\ref{fig:zemke} where $K$ is the right-handed trefoil, $K'$ is the knot $8_{11}$, and the ribbon concordance $C: K \to K'$ has a single 0-handle and 1-handle.

\begin{figure}[t]
\center
\includegraphics[width=.9\linewidth]{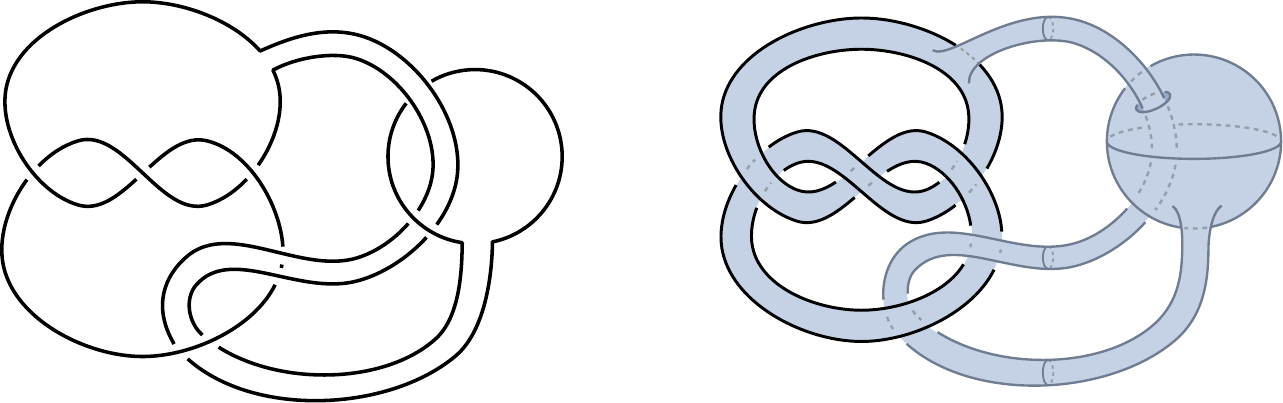}
\caption{(Left) The knot $8_{11}$, drawn to suggest a ribbon concordance $C$ from the right-handed trefoil knot to $8_{11}$. (Right) The double $\bar C \circ C$.}\label{fig:zemke}
\end{figure}

With this setup, the reader can recover the main result of \cite{levine-zemke}:

\begin{exercise}
Use the above to show that any ribbon concordance induces an injective map on Khovanov homology and Bar-Natan homology.
\end{exercise}

Observe that if $\Sigma$ and $\Sigma'$ are  cobordisms in $S^3 \times [0,1]$ from $K$ to $K'$ that are distinguished by their induced maps on Khovanov homology, and if $C$ is a ribbon concordance from $K'$ to $K''$, then it immediately follows that $C \circ \Sigma$ and $C \circ \Sigma'$ are distinguished by their induced maps on Khovanov homology.

{\small \footnotesize \bibliographystyle{alphamod} \bibliography{hayden}}

\end{document}